\documentclass[12pt]{amsart}
\usepackage{setspace, amsmath, amsthm, amssymb, amsfonts, amscd, epic, graphicx, ulem, dsfont}
\usepackage[T1]{fontenc}
\usepackage{multirow}
\usepackage{bbm}
\usepackage{enumerate}

\makeatletter \@namedef{subjclassname@2010}{
  \textup{2020} Mathematics Subject Classification}
\makeatother

\newtheorem{thm}{Theorem}[section]

\newtheorem{cor}[thm]{Corollary}
\newtheorem{lem}[thm]{Lemma}
\newtheorem{pro}[thm]{Proposition}

\theoremstyle{remark}
\newtheorem*{rema}{Remark}
\newtheorem*{remas}{Remarks}

\theoremstyle{definition}
\newtheorem{defn}{Definition}
\newtheorem{exa}[thm]{\textbf{Example}}

\newcommand{\ran}{\operatorname{ran}}
\newcommand{\Ima}{\operatorname{Im}}
\newcommand{\re}{\operatorname{Re}}

\newcommand{\R}{\mathbb{R}}

\newcommand{\N}{\mathbb{N}}

\newcommand{\C}{\mathbb{C}}

\begin{document}

\title[Closability, roots, nilpotence et al.]{Unbounded operators: (square) roots, nilpotence, closability and some related invertibility results}
\author[M. H. Mortad]{Mohammed Hichem Mortad}

\dedicatory{}
\thanks{Supported in part by PRFU project: C00L03UN310120200003}
\date{}
\keywords{Unbounded operators; Non-closable operators; (Square)
roots of operators; Nilpotence; Spectrum; Bijective operators;
Matrices of operators}

\subjclass[2010]{Primary 47A05. Secondary 47A08. 47A99.}

\address{Department of
Mathematics, Laboratoire d'analyse mathématique et applications,
University of Oran 1, Ahmed Ben Bella, B.P. 1524, El Menouar, Oran
31000, Algeria.}

\email{mhmortad@gmail.com, mortad.hichem@univ-oran1.dz.}

\begin{abstract}
In this paper, we are mainly concerned with studying arbitrary
unbounded square roots of linear operators as well as some of their
basic properties. The paper contains many examples and
counterexamples. As an illustration, we give explicit everywhere
defined unbounded non-closable $nth$ roots of the identity operator
as well as the zero operator. We also show a non-closable unbounded
operator without any non-closable square root. Among other
consequences, we have a way of finding everywhere defined bijective
operators, everywhere defined operators which are surjective without
being injective and everywhere defined operators which are injective
without being surjective. Some related results on nilpotence are
also given.
\end{abstract}

\maketitle

\section*{Introduction}

First, while we will recall most of the needed notions for readers'
convenience, we also assume readers have some familiarity with other
very standard concepts and results of operator theory. Some useful
references are \cite{Conway-OPERATOR-TH-AMS-GSM-2000},
\cite{Mortad-Oper-TH-BOOK-WSPC}, \cite{SCHMUDG-book-2012},
\cite{tretetr-book-BLOCK} and \cite{Weidmann}.

Let $H$ be a Hilbert space and let $B(H)$ be the algebra of all
bounded linear operators defined from $H$ into $H$.

If $S$ and $T$ are two linear operators with domains $D(S)\subset H$
and $D(T)\subset H$ respectively, then $T$ is said to be an
extension of $S$, written as $S\subset T$, if $D(S)\subset D(T)$ and
$S$ and $T$ coincide on $D(S)$. The restriction of some operator $T$
to some subspace $M$ is denoted by $T_M$.

The product $ST$ and the sum $S+T$ of two operators $S$ and $T$ are
defined in the usual fashion on the natural domains:

\[D(ST)=\{x\in D(T):~Tx\in D(S)\}\]
and
\[D(S+T)=D(S)\cap D(T).\]

When $\overline{D(T)}=H$, we say that $T$ is densely defined. In
this case, the adjoint $T^*$ exists and is unique.

An operator $T$ is called closed if its graph is closed in $H\oplus
H$. $T$ is called closable if it has a closed extension.
Equivalently, this signifies that for each sequence $(x_n)$ in
$D(T)$ such that $x_n\to 0$ and $Tx_n\to y$, then $y=0$. If $T$ is
densely defined, then $T$ is closable if and only if $D(T^*)$ is
dense. The smallest closed extension of $T$ is called its closure,
and it is denoted by $\overline{T}$. When $T$ is closable, then
$\overline{T}=(T^*)^*$. Recall also that if $T$ is a bounded
operator on some domain $D(T)$, then $T$ is closed if and only if
$D(T)$ is closed (see e.g. Theorem 5.2 in \cite{Weidmann}).

If $T$ is densely defined, we say that $T$ is self-adjoint when
$T=T^*$; symmetric if $T\subset T^*$; normal if $T$ is
\textit{closed} and $TT^*=T^*T$. A symmetric operator $T$ is called
positive if
\[<Tx,x>\geq 0, \forall x\in D(T).\]
Notice that unlike positive operators in $B(H)$, unbounded positive
operators need not be self-adjoint.

In the event of the density of all of $D(S)$, $D(T)$ and $D(ST)$,
then
\[T^*S^*\subset (ST)^*,\]
with the equality occurring when $S\in B(H)$. Also, when $S$, $T$
and $S+T$ are densely defined, then
\[S^*+T^*\subset (S+T)^*,\]
and the equality holding again if $S\in B(H)$.

The real and imaginary parts of a densely defined operator $T$ are
defined respectively by
\[\re T=\frac{T+T^*}{2} \text{ and } \Ima T=\frac{T-T^*}{2i}.\]
Clearly, if $T$ is closed, then $\re T$ is symmetric but it is not
always self-adjoint (it may even fail to be closed).

Let $T$ be a densely defined operator with domain $D(T)\subset H$.
If there exist densely defined symmetric operators $A$ and $B$ with
domains $D(A)$ and $D(B)$ respectively and such that
\[T=A+iB \text{ with } D(A)=D(B),\]
then $T$ is said to have a Cartesian decomposition (\cite{Ota-normal
extensions-BPAS}).

A densely defined operator $T$ admits a Cartesian decomposition if
and only if $D(T)\subset D(T^*)$. In this case, $T=A+iB$ where
\[A=\re T \text{ and } B=\Ima T.\]

Let $A$ be an injective operator (not necessarily bounded) from
$D(A)$ into $H$. Then $A^{-1}: \ran(A)\rightarrow H$ is called the
inverse of $A$ with domain $D(A^{-1})=\ran(A)$.

If the inverse of an unbounded operator is bounded and everywhere
defined (e.g. if $A:D(A)\to H$ is closed and bijective), then $A$ is
said to be boundedly invertible. In other words, such is the case if
there is a $B\in B(H)$ such that
\[AB=I\text{ and } BA\subset I.\]
Clearly, if $A$ is boundedly invertible, then it is closed. Recall
also that $T+S$ is closed if $S\in B(H)$ and $T$ is closed, and that
$ST$ is closed if $S^{-1}\in B(H)$ and $T$ is closed or if $S$ is
closed and $T\in B(H)$.

Based on the bounded case and the previous definition, we say that
an unbounded $A$ with domain $D(A)\subset H$ is right invertible if
there exists an everywhere defined $B\in B(H)$ such that $AB=I$; and
we say that $A$ is left invertible if there is an everywhere defined
$C\in B(H)$ such that $CA\subset I$. Clearly, if $A$ is left and
right invertible simultaneously, then $A$ is boundedly invertible.

The spectrum of unbounded operators is defined as follows: Let $A$
be an operator on a complex Hilbert space $H$. The resolvent set of
$A$, denoted by $\rho(A)$, is defined by
\[\rho(A)=\{\lambda\in\C:~\lambda I-A\text{ is bijective and }(\lambda I-A)^{-1}\in B(H)\}.\]
The  complement of $\rho(A)$, denoted by $\sigma(A)$, i.e.
\[\sigma(A)=\C\setminus \rho(A)\]
is called the spectrum of $A$.

Clearly, $\lambda\in \rho(A)$ iff there is a $B\in B(H)$ such that
\[(\lambda I-A)B=I\text{ and } B(\lambda I-A)\subset I.\]

Also, recall that if $A$ is a linear operator which is not closed,
then $\sigma(A)=\C$.

Kulkrani et al. showed using simple arguments in \cite{Kulkrani et
al-2008} that
\[\sigma(A^2)=[\sigma(A)]^2\]
when $A$ is closed.

Next, we recall the definition of unbounded nilpotent operators. We
choose to use \^{O}ta's definition in
\cite{Ota-nilpotent-idempotent} of nilpotence (S. \^{O}ta gave the
definition in the case $n=2$).

\begin{defn}\label{Ota-nilpotent-idempotent DEFINITIONNN}
Let $T$ be a non necessarily bounded operator with a dense domain
$D(T)$. We say that $T$ is nilpotent if $T^n$ is well defined and
\[T^n=0 \text{ on }D(T)\]
for some $n\in\N$ (hence necessarily $D(T^n)=D(T^{n-1})=\cdots
D(T)$).
\end{defn}

Recall the following lemma:

\begin{lem}\label{ghghhgghghghghghghghghghghghghghghgh}(\cite{Sebestyen-Stochel-JMAA} or \cite{Tarcsay-2012-bounded D(T)=D(T2)}) If $H$ and $K$ are two
Hilbert spaces and if $T:D(T)\subset H\to K$ is a densely defined
closed operator, then
\[D(T)=D(T^*T)\Longleftrightarrow T\in B(H,K).\]
\end{lem}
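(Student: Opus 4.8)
The plan is to prove the two implications separately. The forward one is immediate: if $T\in B(H,K)$, then $T$ is closed, bounded and densely defined, so $D(T)$ is simultaneously closed and dense, whence $D(T)=H$; consequently $T^*\in B(K,H)$, $T^*T\in B(H)$ and $D(T^*T)=H=D(T)$. All the content lies in the reverse implication, which I would establish via the closed graph theorem.

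So assume $D(T)=D(T^*T)$. Recall (von Neumann's theorem) that $T^*T$ is self-adjoint, in particular closed as an operator on $H$. Since $T$ is closed, $D(T)$ equipped with the graph norm $\|x\|_T=(\|x\|^2+\|Tx\|^2)^{1/2}$ is a Hilbert space, which I denote $H_T$; the inclusion $H_T\hookrightarrow H$ is contractive. The crucial step is to regard $T^*T$ now as a map $H_T\to H$: it is everywhere defined on $H_T$ \emph{precisely} because $D(T^*T)=D(T)$, and it has closed graph, since if $x_n\to x$ in $H_T$ and $T^*Tx_n\to z$ in $H$, then also $x_n\to x$ in $H$, so closedness of $T^*T$ on $H$ forces $x\in D(T^*T)$ and $T^*Tx=z$. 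The closed graph theorem then yields a constant $C>0$ with $\|T^*Tx\|\le C\|x\|_T$ for all $x\in D(T)$.

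To conclude, for $x\in D(T)=D(T^*T)$ I would write
\[\|Tx\|^2=\langle T^*Tx,x\rangle\le\|T^*Tx\|\,\|x\|\le C\|x\|_T\,\|x\|=C\sqrt{\|x\|^2+\|Tx\|^2}\,\|x\|.\]
Putting $a=\|Tx\|$ and $b=\|x\|$, this reads $a^2\le Cab+Cb^2$, hence $a\le\frac12(C+\sqrt{C^2+4C})\,b$; that is, $T$ is bounded on $D(T)$. A closed bounded operator has closed domain (see e.g. Theorem 5.2 in \cite{Weidmann}), and a closed dense subspace of $H$ is all of $H$, so $D(T)=H$ and therefore $T\in B(H,K)$.

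The only genuinely non-routine point — and hence the main obstacle — is recognizing that the closed graph theorem should be applied to $T^*T:H_T\to H$, and not to $T$ or to $T^*T$ on $H$ directly; the hypothesis $D(T^*T)=D(T)$ is exactly what makes that map everywhere defined on the Banach space $H_T$, and everything after the resulting norm estimate is elementary. One should of course bear in mind that generically $D(T^*T)\subsetneq D(T)$, so this hypothesis is doing real work.
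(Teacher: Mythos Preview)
Your proof is correct. The paper, however, does not supply its own proof of this lemma: it is simply quoted with references to \cite{Sebestyen-Stochel-JMAA} and \cite{Tarcsay-2012-bounded D(T)=D(T2)}, so there is no in-paper argument to compare against.
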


Thanks to the previous lemma, if $T$ is some densely defined closed
nilpotent operator with domain $D(T)\subset D(T^*)\subset H$, then
$T\in B(H)$. In particular, if $T$ is a closed densely defined
nilpotent symmetric or hyponormal operator, then $T=0$ everywhere on
$H$. See \cite{Frid-Mortad-Dehimi-nilpotence} for a proof and some
closely related results. See also \cite{Tarcsay-2012-bounded
D(T)=D(T2)}.

Now, we give a definition of square roots for general linear
operators.

\begin{defn}Let $A$ and $B$ be linear operators.
Say that $B$ is a square root of $A$ if $B^2=A$ (and so
$D(B^2)=D(A)$). More generally, say that $B$ is an $nth$ root of $A$
(where $n\in\N$) if $B^n=A$ (and so $D(B^n)=D(A)$).
\end{defn}

Notice that for the case $A=0$ on $D(A)$, this includes Definition
\ref{Ota-nilpotent-idempotent DEFINITIONNN} above. An objection, why
not use a definition like $B^2\subset A$? The issue then is that it
is quite conceivable to have $D(B^2)=\{0\}$ (or higher powers as
well). In such case, $B^2\subset A$ holds trivially whilst $A$ can
be anything in $B(H)$! Given the diversity of classes of examples
such that $D(B^n)=\{0\}$ for some $n$ (as may be seen in
\cite{Arlinski-Kovalev}, \cite{Brasche-Neidhardt}, \cite{CH},
\cite{Dehimi-Mortad-CHERNOFF}, \cite{Mortad-TRIVIALITY POWERS
DOMAINS} and \cite{SCHMUDG-1983-An-trivial-domain}), a definition
like $B^n\subset A$ would not therefore yield too informative
conclusions in many situations.

It is well known that self-adjoint positive operators have a unique
positive square root. This does not exclude the fact that a
self-adjoint positive operator may well have other square roots.
Self-adjoint \textit{positive} square roots of self-adjoint
\textit{positive} operators play an important role. For instance,
they intervene in the definition of the absolute value of unbounded
operators (cf. \cite{Boucif-Dehimi-Mortad}), and so in the polar
decomposition. Recall here that the absolute value of a closed $A$
is given by $|A|=\sqrt{A^*A}$ where $\sqrt{\cdot}$ designates the
unique positive square root of $A^*A$, which is positive by the
closedness of $A$.

Positive self-adjoint square roots are also present in abstract wave
and Schr\"{o}dinger's equations (see e.g. \cite{SCHMUDG-book-2012}).
See also \cite{Mc-Intosh-square-hyperbolic PDE}. They, of course,
have other utilizations. Here, we confine our attention to arbitrary
square (or other) roots.

Finding counterexamples using matrices of non necessarily bounded
operators has been a success as demonstrates the recent papers:
\cite{Dehimi-Mortad-CHERNOFF}, \cite{Mortad-TRIVIALITY POWERS
DOMAINS}, \cite{Mortad-Commutators-CEXAMPLES},
\cite{Mortad-paranormal-Daniluk} and \cite{Mortad-Paranormal-all
cexas}. Let us recall their definition briefly:

Let $H$ and $K$ be two Hilbert spaces and let $A:H\oplus K\to
H\oplus K$ (we may equally use $H\times K$ instead of $H\oplus K$)
be defined by
\begin{equation}\label{matrix reprenstation UNBD EQU}
A=\left(
      \begin{array}{cc}
        A_{11} & A_{12} \\
        A_{21} & A_{22} \\
      \end{array}
    \right)
\end{equation}
where $A_{11}\in L(H)$, $A_{12}\in L(K,H)$, $A_{21}\in L(H,K)$ and
$A_{22}\in L(K)$ are not necessarily bounded operators. If $A_{ij}$
has a domain $D(A_{ij})$ with $i,j=1,2$, then
\[D(A)=(D(A_{11})\cap D(A_{21}))\times (D(A_{12})\cap D(A_{22}))\]
is the natural domain of $A$. So if $(x_1,x_2)\in D(A)$, then
\[A\left(
     \begin{array}{c}
       x_1 \\
       x_2 \\
     \end{array}
   \right)=\left(
             \begin{array}{c}
               A_{11}x_1+A_{12}x_2\\
               A_{21}x_1+A_{22}x_2 \\
             \end{array}
           \right).
\]
As is customary, we tolerate the abuse of notation $A(x_1,x_2)$
instead of $A\left(
     \begin{array}{c}
       x_1 \\
       x_2 \\
     \end{array}
   \right)$. The generalization to $n\times n$ matrices of operators is also clear.

Note that unlike matrices of everywhere defined bounded operators,
not all unbounded operators admit such a decomposition (cf.
\cite{Taylor-Lay-Functional-Analysis}). Readers should also be wary
when dealing with products of matrices of (unbounded) operators as
they are not always well defined. However, when dealing with
everywhere defined (unbounded) operators, all products are possible
in this setting.

Recall that the adjoint of $\left(
      \begin{array}{cc}
        A_{11} & A_{12} \\
        A_{21} & A_{22} \\
      \end{array}
    \right)$ is not always $\left(
      \begin{array}{cc}
        A^*_{11} & A^*_{21} \\
        A^*_{12} & A^*_{22} \\
      \end{array}
    \right)$ (even when all domains are dense including the main domain $D(A)$) as
    many known counterexamples show. Nonetheless, e.g.
    \[\left(
        \begin{array}{cc}
          A & 0 \\
          0 & B \\
        \end{array}
      \right)^*=\left(
        \begin{array}{cc}
          A^* & 0 \\
          0 & B^* \\
        \end{array}
      \right)\text{ and } \left(
                            \begin{array}{cc}
                              0 & C \\
                              D & 0 \\
                            \end{array}
                          \right)^*=\left(
                            \begin{array}{cc}
                              0 & D^* \\
                              C^* & 0 \\
                            \end{array}
                          \right)
    \]
if $A$, $B$, $C$ and $D$ are all densely defined. See e.g.
\cite{Moller-Szafanriac-matri-unbounded} or
\cite{Wu-CHEN-ADJOINTS!!} for more about the adjoint's operation of
general operator matrices.

The special case of the matrix $\left(
        \begin{array}{cc}
          A & 0 \\
          0 & B \\
        \end{array}
      \right)$ may be denoted by $A\oplus B$.

Finally, since we will often be dealing with everywhere defined
unbounded operators, we give an example (which is well known to most
readers):

\begin{exa}
Let $f:H\to\C$ be a \textit{discontinuous} linear functional (this
requires the axiom of choice as is known to readers). Let $x_0$ be
any non-zero vector in $H$ and define $A:H\to H$ by
\[Ax=f(x)x_0.\]

Then, $A$ is clearly unbounded and everywhere defined. Obviously,
$A$ is not closable for if it were, the Closed Graph Theorem would
give $A\in B(H)$, which is impossible.
\end{exa}

\begin{rema}
As observed above, an operator $A$ is closable iff $D(A^*)$ is
dense. There are well known examples in the literature of densely
defined unbounded operators $A$ such that even $D(A^*)=\{0\}$. See
\cite{Mortad-TRIVIALITY POWERS DOMAINS} for some "recent example".
Even a non closable $A$ with $D(A)=H$ could be such that
$D(A^*)=\{0\}$. This is a famous example by Berberian which may be
consulted on Page 53 in \cite{Goldberg BOOK-UNBD}.
\end{rema}

\section{Main Results}

First, we give some examples of square roots as regards closedness.
Let $\mathcal{F}_0$ be the restriction of the $L^2(\R)$-Fourier
transform to the dense subspace $C_0^{\infty}(\R)$ which denotes
here the space of infinitely differentiable functions with compact
support. Then, it is well known that
\[D(\mathcal{F}_0^2)=\{0\}\]
because any function $f\in C_0^{\infty}(\R)$ such that $\hat f\in
C_0^{\infty}(\R)$ is null. Hence $\mathcal{F}_0$ is an unclosed
square root of the trivially closed operator $0$ on $\{0\}$.

On the other hand, there are unclosed operators having closed square
roots. For example, on $\ell^2$ define the linear operator $A$ by
\[Ax=A(x_n)=(x_2,0,2x_4,0,\cdots,\underbrace{nx_{2n}}_{2n-1},\underbrace{0}_{2n},\cdots)\]
on the domain
\[D(A)=\{x=(x_n)\in \ell^2:~(nx_{2n})\in\ell^2\}.\]
We may check that $D(A)$ is dense in $\ell^2$, that $A$ is unbounded
and closed.

Then, it can readily be checked that
\[A^2=0\text{ on } D(A^2)=D(A)\]
and so $A^2$ is bounded on $D(A)$. Finally, since we know that
$A^2=0$ on $D(A)$ and that $D(A)$ is not closed, it follows that
$A^2$ is a non closed operator. This example first appeared in
\cite{Ota-range-in-domain-1984}.

As another example, take any unbounded closed operator $T$ on a
dense domain
    $D(T)\subset H$. Then setting
    \[A=\left(
          \begin{array}{cc}
            0 & T \\
            0 & 0 \\
          \end{array}
        \right),
    \]
    we see that $D(A)=H\oplus D(T)$. Hence
    \[A^2=\left(
            \begin{array}{cc}
              0 & 0_{D(T)} \\
              0 & 0_{D(T)} \\
            \end{array}
          \right)=\left(
            \begin{array}{cc}
              0 & 0_{D(T)} \\
              0 & 0 \\
            \end{array}
          \right)
    \]
    and so $A^2=0$ on $D(A^2)=D(A)$. Hence $A^2$ is unclosed.

Next, we provide an invertible bounded operator without any closable
square root.

\begin{pro}
There is a bounded subnormal invertible operator without any
closable square root.
\end{pro}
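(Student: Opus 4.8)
The plan is to exhibit a concrete bounded subnormal invertible operator $A$ and then argue that any square root $B$ of $A$ (i.e.\ any linear operator $B$, bounded or not, with $B^2 = A$) cannot be closable. A natural candidate is an operator on $\ell^2$ (or $L^2$) whose spectrum fails to admit a continuous square root on the whole spectrum, so that there is no \emph{bounded} square root by the spectral mapping obstruction, but then one must rule out unbounded closable ones as well. The cleanest choice is the bilateral shift $U$ on $\ell^2(\Z)$, or better the unilateral shift $S$ on $\ell^2(\N)$ shifted to be invertible — but since the unilateral shift is not invertible, I would instead take $A$ to be a unitary with spectrum the full unit circle, e.g.\ the bilateral shift $U$; it is unitary, hence normal, hence subnormal, and it is invertible with $U^{-1} = U^*$.

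First I would recall that if $B$ is closable with closure $\overline B$, then $B \subset \overline B$, and $B^2 \subset \overline B^{\,2}$, so $A = B^2$ forces $A \subset \overline B^{\,2}$; since $A = U$ is everywhere defined and bounded, this gives $\overline B^{\,2} = U$ with $D(\overline B) = H$, whence by the Closed Graph Theorem $\overline B \in B(H)$ and $\overline B^{\,2} = U$. So it suffices to show $U$ has no \emph{bounded} square root. Second I would invoke the spectral mapping theorem for bounded operators: if $\overline B \in B(H)$ and $\overline B^{\,2} = U$, then $\sigma(U) = \sigma(\overline B)^2 = \{\mu^2 : \mu \in \sigma(\overline B)\}$. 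Third, the key algebraic/topological step: since $\sigma(U)$ is the entire unit circle, one would need $\sigma(\overline B)$ to be a compact subset of $\C$ with $\{\mu^2 : \mu \in \sigma(\overline B)\}$ equal to the unit circle, and moreover one needs a holomorphic (or at least continuous functional-calculus–compatible) square root — here I would use the sharper fact that a bounded operator whose spectrum is the unit circle and which is invertible has a bounded square root iff it has a bounded logarithm, and a unitary with full-circle spectrum does not, because the winding number obstruction (the class of $z \mapsto z$ in $H^1$ of the circle, i.e.\ $K_1$) is nontrivial; equivalently, $z$ has no continuous square root on $\mathbb{T}$.

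The main obstacle I anticipate is making the last step rigorous at the level of \emph{arbitrary} bounded square roots rather than those arising from a holomorphic functional calculus: $\overline B$ need not commute with $U$ a priori, and $\sigma(\overline B)$ could in principle be a complicated set. To handle this cleanly I would note that $\overline B$ \emph{does} commute with $U = \overline B^{\,2}$, hence with every element of the (abelian, since $U$ is normal) von Neumann algebra generated by $U$; passing to $U = \int_{\mathbb T} \lambda \, dE(\lambda)$ and using that $\overline B$ commutes with the spectral measure $E$, one reduces to the case $H = L^2(\mathbb T)$ with $U$ multiplication by $\lambda$, and then $\overline B$ is itself decomposable over $\mathbb T$ with fibers $B(\lambda)$ satisfying $B(\lambda)^2 = \lambda$; measurability of $\lambda \mapsto B(\lambda)$ together with $\|B(\lambda)\| \le \|\overline B\|$ and a selection of square roots of the scalars $\lambda$ on the circle would then contradict the impossibility of a measurable (indeed the failure is already at the continuous level, but one must push it to merely bounded-measurable sections, which still fails by an orientation/degree mod $2$ argument on the circle). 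An alternative, possibly shorter, route I would try first is to pick instead $A = 2I + U$ (still subnormal, invertible, with $\sigma(A)$ the circle of radius $1$ centered at $2$, which does \emph{not} wind around $0$) — no wait, that one \emph{does} have a square root; so the circle-centered-at-origin feature is essential, confirming $A = U$ (or any unitary with $\sigma(A) = \mathbb T$) as the right example, and I would present the von Neumann algebra reduction as the decisive lemma.
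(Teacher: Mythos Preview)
Your reduction from ``closable square root'' to ``bounded square root'' is fine and matches the paper's argument: if $B^2=A$ with $A\in B(H)$ then $D(B)=H$, so a closable $B$ is bounded by the Closed Graph Theorem. The gap is in your choice of example. The bilateral shift $U$ (or any unitary, or indeed any normal operator) \emph{does} have a bounded square root. Via the Borel functional calculus, if $U=\int_{\mathbb T}\lambda\,dE(\lambda)$ then $B:=\int_{\mathbb T}\phi(\lambda)\,dE(\lambda)$ is a bounded square root whenever $\phi$ is a bounded \emph{measurable} function on $\mathbb T$ with $\phi(\lambda)^2=\lambda$; and such a $\phi$ exists, e.g.\ $\phi(e^{i\theta})=e^{i\theta/2}$ for $\theta\in[0,2\pi)$. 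The winding-number/degree obstruction you invoke rules out a \emph{continuous} (or holomorphic) square-root selection on $\mathbb T$, which would be relevant for the Riesz holomorphic functional calculus, but the Borel calculus for normal operators only needs measurability. Likewise every unitary has a bounded logarithm ($U=e^{iA}$ with $A$ bounded self-adjoint), so your ``sharper fact'' points the wrong way. Your von Neumann algebra reduction is actually correct and, since $U$ has simple spectrum, forces $B=M_f$ with $f^2(\lambda)=\lambda$ a.e.; but again this equation \emph{is} solvable measurably, so no contradiction arises.

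The paper's example is essentially different in kind: it takes the analytic position operator $Af(z)=zf(z)$ on the Bergman space of an annulus $\{r<|z|<R\}$, which is subnormal and invertible but \emph{not} normal, and cites the nontrivial result of Halmos--Lumer--Sch\"affer that this $A$ has no bounded square root. The non-normality is the point: a normal operator always has roots via Borel functional calculus, so any example for this proposition must live strictly inside the subnormal class.
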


\begin{proof}
Let $D$ be the annulus $\{z\in\C:~r<|z|<R\}$ where $r,R>0$. Let
$\mu$ be a planar Lebesgue measure in $D$. Let $L^2(D)$ be the
collection of all complex-valued functions which are analytic
throughout $D$ and square-integrable w.r.t. $\mu$ (the Bergman
space). That is, $f\in L^2(D)$ if $f$ is analytic in $D$ and
$\int_D|f(z)|^2d\mu(z)<\infty$. Then $L^2(D)$ is a Hilbert space
w.r.t. the inner product
\[<f,g>=\int_Df(z)\overline{g(z)}d\mu(z).\]

Define now an analytic position operator $A: L^2(D)\to L^2(D)$ by
\[Af(z)=zf(z).\]
Then  $A$ is bounded, subnormal, invertible and without any
(bounded) square root. In addition, $A$ does not have any bounded
square root. This is utterly non trivial and was established by
Halmos et al. in \cite{Halmos et al SQ ROOT Invert NO!}.

Assume now that there is a closable operator $B$ such that $B^2=A$.
Then
\[D(B^2)=D(A)=L^2(D)\subset D(B)\]
whereby $D(B)=L^2(D)$. The Closed Graph Theorem then tells us that
$B$ is bounded and so $A$ would possess a square root, which
contradicts the first part of the proof. Therefore, $A$ does not
possess any closable square root.
\end{proof}

As is known to readers, there are finite square matrices which are
rootless, i.e. not having any root of any order. Such is the case
for instance with the matrix $\left(
                                \begin{array}{cc}
                                  0 & 1 \\
                                  0 & 0 \\
                                \end{array}
                              \right)$. Next, we present two examples of everywhere defined, unclosable and
unbounded nilpotent operators. In other words, we supply two
non-closable square roots of $0\in B(H)$.

\begin{exa}\label{stochel's example} Let $f:H\to\C$ be a
\textit{discontinuous} linear functional (where we allow $\dim H\geq
\aleph_0$). Let $e$ be a normalized vector in $\ker f$. Now, define
a linear operator $A$ on $H$ by $D(A)=H$ and $Ax=f(x)e$ for each
$x\in H$. Then for $x\in H$
\[A^2x=A(f(x)e)=f(x)f(e)e=0.\]

Thus, $A^2=0$ everywhere on the whole of $H$. Accordingly, $A^2$ is
self-adjoint! Now, $A$ cannot be closable. A way of seeing this, is
that if $A$ were closable, the Closed Graph Theorem would make it
bounded.

An alternative way of seeing that the operator $A$ is not closable
is to invoke Proposition 4.5 of \cite{Stochel-TWO STOCHELS!}. There,
the writers showed that $D(A^{*})=\{e\}^{\perp}$ (and $A^*$ is the
zero operator on $\{e\}^{\perp}$), that is, $A^*$ is not densely
defined and consequently, $A$ is not closable.
\end{exa}

The second example is simple once readers are familiar with matrices
of operators.

\begin{exa}\label{unclosable A A2=0 matrices of oper EXA}
 Consider any unbounded non-closable
operator $B$ with domain $D(B)=H$ (and so $D(B^*)$ is not dense).
Then set
\[A=\left(
      \begin{array}{cc}
        0 & B \\
        0 & 0 \\
      \end{array}
    \right)
\]
and so $D(A)=H\oplus H$. Clearly, $A$ is unbounded. Since
\[A^*=\left(
      \begin{array}{cc}
        0 & 0 \\
        B^* & 0 \\
      \end{array}
    \right),\]
we see that $D(A^*)=D(B^*)\oplus H$ is not dense in $H\oplus H$,
making $A$ non-closable. Moreover,
\[D(A^2)=\{(x,y)\in H\times H: A(x,y)=(By,0)\in H\times H\}=H\oplus H.\]
Hence, we see that
\[A^2=\left(
      \begin{array}{cc}
        0 & 0 \\
        0 & 0 \\
      \end{array}
    \right)\]
(everywhere on $H\oplus H$).
\end{exa}

\begin{rema}
As observed above, the foregoing two examples constitute
non-closable unbounded square roots of $0\in B(H)$. Incidently, some
compact operators may have unbounded square roots. Indeed, once we
have seen one, non-zero compact operators may also have unbounded
square roots. More precisely, we have:
\end{rema}

\begin{cor}
There are many compact operators having unbounded square roots.
\end{cor}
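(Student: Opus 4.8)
The plan is to leverage the two non-closable square roots of $0\in B(H)$ already exhibited (Example \ref{stochel's example} and Example \ref{unclosable A A2=0 matrices of oper EXA}) and to ``glue'' one of them onto a genuinely compact operator via a direct-sum construction, so that the resulting operator is compact as a whole yet has a square root whose domain forces unboundedness. Concretely, let $K\in B(H_1)$ be any compact operator on a Hilbert space $H_1$ (for instance a nonzero diagonal operator $\mathrm{diag}(1,\tfrac12,\tfrac13,\dots)$ on $\ell^2$, or even $K=0$), and let $H_2$ be any infinite-dimensional Hilbert space carrying an everywhere defined unbounded non-closable operator $B$ with $B^2=0$ on all of $H_2$ (such $B$ exists by Example \ref{stochel's example}). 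On $H=H_1\oplus H_2$ consider
\[
T=\left(\begin{array}{cc} K & 0 \\ 0 & 0 \end{array}\right)\in B(H),
\]
which is compact because $K$ is compact and the zero block is trivially compact; a finite-rank (or compact) operator direct-summed with $0$ is compact.

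Next I would produce an unbounded square root of $T$. Since $K$ is compact and positive (choosing $K$ positive costs nothing), it has a bounded positive square root $K^{1/2}\in B(H_1)$; but $K^{1/2}$ need not be compact, so instead I would keep things cleanest by taking $K=0$ from the start, so that $T=0\oplus 0$ is compact and already handled — except that then $T$ is the zero operator and the corollary's force (``compact operators,'' plural, many of them) would be underwhelming. So the better route is: take $K$ compact with a \emph{compact} square root $C$ (e.g.\ $K=\mathrm{diag}(1,\tfrac14,\tfrac19,\dots)$ so $C=\mathrm{diag}(1,\tfrac12,\tfrac13,\dots)$ is compact), and set
\[
S=\left(\begin{array}{cc} C & 0 \\ 0 & B \end{array}\right)
\]
on $D(S)=H_1\oplus H_2$ (everywhere defined since $C\in B(H_1)$ and $D(B)=H_2$). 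Because the blocks are diagonal, $S^2=\big(\begin{smallmatrix} C^2 & 0\\ 0 & B^2\end{smallmatrix}\big)=\big(\begin{smallmatrix} K & 0\\ 0 & 0\end{smallmatrix}\big)=T$ on all of $H$, using $B^2=0$ on $H_2$. Thus $S$ is a square root of the compact operator $T$. Finally, $S$ is unbounded: its restriction to the second summand is $B$, which is unbounded, so $S$ cannot be bounded. (One even gets more — $S$ is non-closable, since $D(S^*)=D(C^*)\oplus D(B^*)=H_1\oplus D(B^*)$ is not dense in $H$ because $D(B^*)$ is not dense; but the corollary only claims unboundedness.)

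To conclude ``many such compact operators,'' I would observe that the construction is completely flexible: $H_1$ and the compact positive $K$ with compact square root may be chosen in uncountably many inequivalent ways (any $\mathrm{diag}(a_n^2)$ with $a_n\to 0$ works), and each yields a distinct compact $T=K\oplus 0$ admitting the unbounded square root $S=C\oplus B$. The only point requiring a word of care — and the place I'd expect a referee to look closely — is the claim that the matrix computation $S^2=T$ holds \emph{on the full domain} $H$ rather than on a smaller natural domain: this is exactly why everywhere-definedness of $B$ (so that products of the operator matrices are unconditionally defined, as noted in the excerpt's discussion of matrices of everywhere defined unbounded operators) is essential, and why $B$ being genuinely everywhere defined (not merely densely defined) cannot be dispensed with.
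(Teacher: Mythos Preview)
Your proof is correct and follows essentially the same approach as the paper: form the direct sum of a bounded square root of a compact operator with an everywhere defined unbounded square root of $0$, obtaining an unbounded square root of a (nonzero) compact operator. The paper's version is terser and does not insist that the bounded square root itself be compact (only that it lie in $B(H)$), but the idea is identical.
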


\begin{proof}Let $T$ be an unbounded square root of
$0\in B(H)$. Letting $B\in B(H)$ to be a square root of some compact
operator $C$, we see that $T\oplus B$ is an unbounded square root of
the non-zero compact operator $0\oplus C$.
\end{proof}

Now, we treat the general case.

\begin{thm}\label{Tn=0 everywhere on H T unclosable} Let $n\in\N$ be given.
There are infinitely many everywhere defined non-closable unbounded
operators $T$ such that $T^n=0$ \textit{everywhere} on $H$ while
$T^{n-1}\neq 0$.
\end{thm}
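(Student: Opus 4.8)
The plan is to build such a $T$ as an $n\times n$ operator matrix acting on $H^n = H \oplus H \oplus \cdots \oplus H$ (so that, after identifying $H^n$ with a single Hilbert space of the same dimension, we get an operator on $H$ itself). The natural candidate is the nilpotent Jordan-type block
\[
T=\left(
\begin{array}{ccccc}
0 & B & 0 & \cdots & 0\\
0 & 0 & B & \cdots & 0\\
\vdots & & \ddots & \ddots & \vdots\\
0 & 0 & \cdots & 0 & B\\
0 & 0 & \cdots & 0 & 0
\end{array}
\right),
\]
where $B$ is any everywhere defined unbounded non-closable operator on $H$ with $D(B)=H$ (hence $D(B^*)$ is not dense); such a $B$ exists by the example with the discontinuous linear functional discussed in the excerpt. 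Since every entry is everywhere defined, all products of these matrices are legitimate in the sense recalled earlier, so $T^k$ is computed by the usual matrix multiplication.

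First I would record that $D(T)=H^n$: each row of $T$ involves only a single copy of $B$, and $D(B)=H$, so the natural domain is all of $H^n$, and $T$ is unbounded because $B$ is. Next I would compute the powers: $T^k$ is the matrix with $B^k$ on the $k$th superdiagonal and zeros elsewhere, for $1\le k\le n-1$, and $T^n=0$. Here $B^k$ is the $k$-fold iterate of $B$, which (being everywhere defined) is a genuine everywhere-defined operator on $H$; one has to be slightly careful to note that the matrix entries of $T^{n-1}$ are $B^{n-1}$ in the top-right corner, and $B^{n-1}\neq 0$ as an operator provided $B$ is chosen so that $B^{n-1}\neq 0$ — and this is easy to arrange, e.g. taking $Bx=f(x)x_0$ with $f(x_0)\neq 0$ (replace the normalized $e\in\ker f$ from Example~\ref{stochel's example} by a vector not in $\ker f$), which gives $B^{k}x=f(x)f(x_0)^{k-1}x_0\neq 0$ for all $k$. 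Thus $T^{n-1}\neq 0$ and $T^n=0$ everywhere on $H^n$, so $T$ is an everywhere defined nilpotent operator of index exactly $n$.

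Then I would check non-closability. Using the block-adjoint formulas recalled in the excerpt (valid here since the off-diagonal placement is of the $\left(\begin{smallmatrix}0&C\\D&0\end{smallmatrix}\right)$ anti-diagonal type in each $2\times 2$ sub-pattern, or more simply because one can identify $D(T^*)$ directly), the adjoint $T^*$ is the lower-triangular matrix with $B^*$ on the subdiagonal; its domain is contained in $D(B^*)\oplus\cdots$, which fails to be dense in $H^n$ because $D(B^*)$ is not dense in $H$. Hence $T$ is non-closable. Alternatively, and more robustly, I would argue that if $T$ were closable it would be bounded by the Closed Graph Theorem (being everywhere defined with a closed extension that must then equal $T$), contradicting unboundedness. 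Finally, infinitude: varying $B$ over the uncountable family of such discontinuous functionals (or scaling $B$ by nonzero constants, or replacing $H$ by nonisomorphic summands) yields infinitely many pairwise distinct such $T$.

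The main obstacle I anticipate is the bookkeeping around the adjoint and the domain of $T^{n-1}$: one must confirm that forming $T^{n-1}$ really does place $B^{n-1}$ — not some smaller composite — in the corner, and that the natural domain of each intermediate power stays equal to all of $H^n$ so that the "$=0$ on $D(T^n)=D(T)$" requirement of the nilpotence definition is genuinely met rather than holding only on a smaller domain. Everything else is routine once $B$ is fixed with $f(x_0)\neq 0$; the closability argument via the Closed Graph Theorem sidesteps the delicate block-adjoint computation entirely, so I would present that as the primary route and mention the adjoint-domain computation as a remark.
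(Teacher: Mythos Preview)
Your proposal is correct and follows essentially the same strategy as the paper: build $T$ as a strictly upper-triangular $n\times n$ block matrix on $H^n$ with a non-closable everywhere-defined $B$ above the diagonal, and deduce non-closability from the Closed Graph Theorem. The only differences are cosmetic --- you place $B$ on the first superdiagonal alone (a Jordan block) and take $Bx=f(x)x_0$ with $f(x_0)\neq 0$ directly, whereas the paper fills every strictly upper entry with $B$ and manufactures $B=A\oplus I$ from the nilpotent $A$ of Example~\ref{stochel's example}; either choice guarantees $B^{\,n-1}\neq 0$, hence $T^{n-1}\neq 0$.
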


\begin{proof}
Let $B$ be an everywhere defined unbounded unclosable operator such
that $B^2\neq0$. Perhaps some more details are desirable. Let $A$ be
a non-closable operator such that $D(A)=H$ and $A^2=0$ as in
Examples \ref{stochel's example} \& \ref{unclosable A A2=0 matrices
of oper EXA}. Then set
\[B=\left(
                                                                                                                        \begin{array}{cc}
                                                                                                                          A & 0 \\
                                                                                                                          0 & I \\
                                                                                                                        \end{array}
                                                                                                                      \right)\]
where $I$ is the identity operator on $H$ (hence $D(B)=H\oplus H$).
It is seen that $B$ is unclosable, unbounded and
\[B^2=\left(\begin{array}{cc}
                                                                                                                          A & 0 \\
                                                                                                                          0 & I \\
                                                                                                                        \end{array}
                                                                                                                      \right)\left(
                                                                                                                        \begin{array}{cc}
                                                                                                                          A & 0 \\
                                                                                                                          0 & I \\
                                                                                                                        \end{array}
                                                                                                                      \right)=\left(
                                                                                                                        \begin{array}{cc}
                                                                                                                          A^2 & 0 \\
                                                                                                                          0 & I \\
                                                                                                                        \end{array}
                                                                                                                      \right)=\left(
                                                                                                                        \begin{array}{cc}
                                                                                                                          0 & 0 \\
                                                                                                                          0 & I \\
                                                                                                                        \end{array}
                                                                                                                      \right)\neq \left(
                                                                                                                        \begin{array}{cc}
                                                                                                                          0 & 0 \\
                                                                                                                          0 & 0 \\
                                                                                                                        \end{array}
                                                                                                                      \right).\]

Now, define
\[T=\left(
      \begin{array}{ccc}
        0 & B & B \\
        0 & 0 & B \\
        0 & 0 & 0 \\
      \end{array}
    \right)
\]
and so $D(T)=H\oplus H\oplus H$. Clearly, $T$ is unbounded and not
closable. Then
\[T^2=\left(
      \begin{array}{ccc}
        0 & 0 & B^2 \\
        0 & 0 & 0 \\
        0 & 0 & 0 \\
      \end{array}
    \right)\]
and
\[T^3=\left(
      \begin{array}{ccc}
        0 & 0 & 0 \\
        0 & 0 & 0 \\
        0 & 0 & 0 \\
      \end{array}
    \right)\]
where all zeros are in $B(H)$.

To deal with the general case, define on $H\oplus H\oplus
\cdots\oplus H$ ($n$ copies of $H$) the unbounded non-closable
\[T=\left(
    \begin{array}{cccccc}
      0 & B & B & \cdots & \cdots & B \\
      0 & 0& B & B &  &  \vdots\\
      \vdots &  & 0 & B & \ddots & \vdots \\
      \vdots &  &  & \ddots & \ddots & B \\
      0 &  &  &  & 0 & B \\
      0 &0 & \cdots & \cdots & 0 & 0 \\
    \end{array}
  \right).\]
Clearly, $D(T)=H\oplus H\oplus \cdots \oplus H$, and as above, it
may be checked that
\[T^{n-1}\neq 0 \text{ whereas }T^n=0\]
everywhere on $D(T^n)=H\oplus H\oplus \cdots \oplus H$. To obtain
infinitely many of them, just change each $B$ by $\alpha B$ where
$\alpha\in\R$, say.
\end{proof}

In an interesting preprint, I. D. Mercer \cite{Mercer} gave a way of
constructing $n\times n$ matrices $B$ such that none of
$B,B^2,\cdots, B^{n-1}$ has any zero entry yet $B^n=0$. The general
form (though not explicitly indicated in that preprint) is given by:

\[B=\left(
    \begin{array}{ccccccc}
      2 & 2 & \cdots & \cdots & 2& 1-n \\
      n+2 & 1 & \cdots & \cdots &1& -n \\
      1 & n+2 & 1 & \cdots & 1 &\vdots \\
      \vdots & 1 & \ddots & \ddots & \vdots &\vdots\\
      \vdots & \vdots & \ddots & \ddots &  1&-n \\
      1& 1 & \cdots & 1& n+2& -n\\
    \end{array}
  \right)
\]

By way of an example, consider the $6\times 6$ matrix:
\[B=\left(
    \begin{array}{cccccc}
      2 & 2 & 2 & 2 & 2 &-5 \\
      8 & 1 & 1 & 1 &1 &-6 \\
      1 & 8 & 1 & 1 & 1&-6 \\
      1 & 1 & 8 & 1 & 1&-6\\
      1 & 1 & 1 & 8 &1 &-6 \\
      1 & 1 & 1 & 1 & 8& -6\\
    \end{array}
  \right).
\]

Then it may be checked that $B^p\neq 0_{\mathcal{M}_6}$ and none of
their entries is null for all $1\leq p\leq 5$, yet
$B^6=0_{\mathcal{M}_6}$.

\begin{thm}
Let $n\in\N$. There is a matrix of operators $A$ of size $n\times n$
whose entries are either all in $B(H)$ or all unbounded and
unclosable and defined on all of $H$, such that all entries of all
$A^p$ ($1\leq p\leq n-1$) are non zero operators, yet $A^n=0$
everywhere on $H\oplus H\oplus \cdots \oplus H$.
\end{thm}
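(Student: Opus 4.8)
The plan is to \emph{tensor} Mercer's purely scalar construction with a single, carefully chosen operator, so that the operator-matrix arithmetic collapses to ordinary matrix arithmetic. For $n=1$ the condition on the powers $A^p$, $1\le p\le n-1$, is vacuous, so one simply takes $A=0$; assume henceforth $n\ge 2$. Let $B=(b_{ij})\in\mathcal{M}_n(\R)$ be Mercer's matrix in the explicit form displayed just above the statement, so that by \cite{Mercer} one has $B^n=0$ while every entry of each of $B,B^2,\dots,B^{n-1}$ is a non-zero scalar; in particular every $b_{ij}\neq0$.

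For the bounded alternative, fix any Hilbert space $H$ and put $A_{ij}=b_{ij}I_H$. Since all entries lie in $B(H)$, the operator matrix $A=(A_{ij})$ multiplies exactly as the scalar matrix $B$ does, so $(A^p)_{ik}=(B^p)_{ik}I_H$ for every $p$: this is a non-zero operator for $1\le p\le n-1$ and is the zero operator for $p=n$. That settles the bounded case.

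For the unbounded alternative, the point is to choose the ``building block'' so that none of its first $n-1$ powers vanishes. By Theorem~\ref{Tn=0 everywhere on H T unclosable} (and simply renaming the Hilbert space it acts on as $H$) fix an everywhere defined, unbounded, non-closable operator $T:H\to H$ with $T^{n}=0$ and $T^{n-1}\neq0$. Set $A_{ij}=b_{ij}T$. Each $A_{ij}$ is everywhere defined; it is unbounded because $b_{ij}\neq0$ and $T$ is unbounded; and it is non-closable, since a closable $A_{ij}$ would force $T=b_{ij}^{-1}A_{ij}$ to be closable, a contradiction. Because every entry is everywhere defined, $D(A)=H\oplus\cdots\oplus H$ and all powers of $A$ are everywhere defined; and — this is the one place requiring a little care — the product of everywhere defined operator matrices is computed blockwise exactly as in the scalar case, the scalars $b_{ij}$ factoring through the (everywhere defined) copies of $T$. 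Hence $(A^p)_{ik}=(B^p)_{ik}\,T^p$ for every $p$. For $1\le p\le n-1$ we have $(B^p)_{ik}\neq0$ and $T^p\neq0$ (since $T^{n-1}\neq0$), so $(A^p)_{ik}$ is a non-zero operator; while $(A^n)_{ik}=(B^n)_{ik}T^n=0$ (doubly so, as $B^n=0$ and $T^n=0$), whence $A^n=0$ everywhere on $H\oplus\cdots\oplus H$.

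The only genuinely delicate issue is the one flagged earlier in the paper, namely that products of matrices of unbounded operators need not be well defined; here it disappears because $T$, hence every $A_{ij}$, is everywhere defined. So the real ``obstacle'' is just bookkeeping: ensuring the chosen block $T$ has nilpotence index \emph{exactly} $n$ (which is why it is imported from Theorem~\ref{Tn=0 everywhere on H T unclosable} rather than from the index-$2$ Examples~\ref{stochel's example}--\ref{unclosable A A2=0 matrices of oper EXA}), and verifying that non-zero scalar multiples of an unbounded non-closable operator remain unbounded and non-closable.
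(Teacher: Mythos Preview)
Your proof is correct and follows essentially the same route as the paper: both take Mercer's scalar matrix $B$ and replace each entry $b_{ij}$ by $b_{ij}T$ for a suitable everywhere defined operator $T$ with $T^p\neq 0$ for $1\le p\le n-1$, then use $(A^p)_{ik}=(B^p)_{ik}T^p$. The paper treats the bounded and unbounded cases in one stroke (any such $T$ works, bounded or not), whereas you split them and, in the unbounded case, additionally import $T^n=0$ from Theorem~\ref{Tn=0 everywhere on H T unclosable}; as you yourself note, that extra condition is redundant since $B^n=0$ already forces $A^n=0$.
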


\begin{proof}The proof remains unchanged whether the entries are all in
$B(H)$ or are all unbounded, unclosable and everywhere defined on
$H$. So let $T$ be any linear operator defined on all of $H$ such
that $T^{p}\neq 0$ for $1\leq p\leq n-1$ (as in Theorem \ref{Tn=0
everywhere on H T unclosable}). Set
\[A=\left(
    \begin{array}{ccccccc}
      2T & 2T & \cdots & \cdots & 2T& (1-n)T \\
      (n+2)T & T & \cdots & \cdots &T& -nT \\
      T & (n+2)T & T & \cdots & T &\vdots \\
      \vdots & T & \ddots & \ddots & \vdots &\vdots\\
      \vdots & \vdots & \ddots & \ddots & T &-nT \\
      T& T & \cdots & T& (n+2)T & -nT\\
    \end{array}
  \right)\]
which is defined on all $H\oplus H\oplus \cdots \oplus H$ ($n$
copies of $H$). Then none of the entries of $A^p$ with $1\leq p\leq
n-1$ is the zero operator yet $A^n=0$ everywhere on $H\oplus H\oplus
\cdots \oplus H$.
\end{proof}

By borrowing an idea from \cite{Conway-Morrel} used for bounded
operators, we may give a way of finding non-closable roots of some
particular non-closable operators:

\begin{pro}
Let $T$ be a non-closable operator such that $D(T)=H$. Then $T\oplus
T\oplus\cdots \oplus T$, defined on $H\oplus H\oplus\cdots \oplus H$
($n$ times), has always non-closable $nth$ roots.
\end{pro}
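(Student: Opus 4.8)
The plan is to construct the $n$th root of $T\oplus T\oplus\cdots\oplus T$ as a block matrix of operators acting on the same $n$-fold direct sum, using a companion-type construction. Following the idea borrowed from \cite{Conway-Morrel} for the bounded setting, I would look for a root of the form
\[
S=\left(
    \begin{array}{ccccc}
      0 & I & 0 & \cdots & 0 \\
      0 & 0 & I & \cdots & 0 \\
      \vdots & & & \ddots & \vdots \\
      0 & 0 & 0 & \cdots & I \\
      T & 0 & 0 & \cdots & 0 \\
    \end{array}
  \right)
\]
on $H\oplus H\oplus\cdots\oplus H$ ($n$ copies), where $I$ is the identity on $H$. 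Since $D(T)=H$, all entries are everywhere defined, so all the block products involved are legitimately defined on the whole space (as was emphasised in the discussion of matrices of everywhere defined operators in the Introduction), and $D(S)=H\oplus\cdots\oplus H$.

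The key computation is that $S^n=T\oplus T\oplus\cdots\oplus T$ everywhere on the direct sum. This is the usual companion-matrix identity: multiplying $S$ by itself cyclically shifts the superdiagonal block of $I$'s downward, and after $n$ steps the single $T$ entry in the bottom-left has been carried around to every diagonal position, producing $\operatorname{diag}(T,T,\ldots,T)$. I would verify this by an easy induction on the power $p$: for $1\le p\le n-1$, $S^p$ has $I$'s on the $p$th superdiagonal of blocks, $T$'s on the corresponding subdiagonal, and zeros elsewhere; then $S^{n-1}\cdot S$ collapses to the diagonal. Because every entry is everywhere defined, no domain subtleties arise and $D(S^n)=H\oplus\cdots\oplus H=D(T\oplus\cdots\oplus T)$, as required by the definition of an $n$th root.

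It then remains to check that $S$ is not closable. Since $T$ is non-closable and $D(T)=H$, its adjoint $T^*$ is not densely defined; equivalently there is a sequence $x_k\to 0$ in $H$ with $Tx_k\to y\neq 0$. Feeding the vector $(x_k,0,\ldots,0)$ into $S$ gives $S(x_k,0,\ldots,0)=(0,\ldots,0,Tx_k)\to(0,\ldots,0,y)\neq 0$ while $(x_k,0,\ldots,0)\to 0$; this directly contradicts the sequential criterion for closability recalled in the Introduction, so $S$ is non-closable. (Alternatively one can compute $S^*$ using the block-adjoint formulas for the $I$-entries and observe that the $T$-column forces $D(S^*)$ to fail to be dense.) To obtain ``always'' in the strong sense one may also note that scaling, e.g. replacing $T$ in the bottom-left corner by $\alpha^n T$ and the $I$'s by $\alpha I$ for $\alpha\in\R\setminus\{0\}$, produces further non-closable $n$th roots.

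I expect the only mild obstacle to be bookkeeping in the companion-matrix power computation with blocks and making sure the everywhere-definedness is invoked at each multiplication so that no product of the matrices is ill defined; once that is granted, both the identity $S^n=T\oplus\cdots\oplus T$ and the non-closability are straightforward.
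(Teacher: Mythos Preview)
Your proof is correct and follows essentially the same companion-matrix idea as the paper: the paper places $T$ in the top-right corner with $I$'s on the subdiagonal, whereas you use the transposed variant with $T$ in the bottom-left and $I$'s on the superdiagonal, but the computation $S^n=T\oplus\cdots\oplus T$ and the conclusion are identical. You also supply an explicit sequential argument for non-closability, which the paper simply asserts as ``clearly unclosable''.
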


\begin{proof}Let $I$ be the identity operator on $H$.  An unclosable everywhere defined $nth$ root of $T\oplus T\oplus\cdots \oplus T$ is given by
 the $n\times n$ matrix of operators:
\[S:=\left(
    \begin{array}{ccccc}
      0 & 0 & \cdots & 0 & T \\
      I & 0 & \cdots & 0 & 0 \\
      0 & I & \cdots & 0 & 0 \\
      \vdots & \ddots & \ddots & \vdots & \vdots \\
      0 & 0 & \cdots & I & 0 \\
    \end{array}
  \right).\]
Indeed, $S$ is clearly unclosable on $D(S)=H\oplus H\oplus\cdots
\oplus H$. Moreover,
\[S^n=T\oplus T\oplus\cdots \oplus T,\]
as wished.
\end{proof}

Now, we give a non-closable operator without any closable square
root.

\begin{pro}
There exists a non-closable operator without any closable square
root whatsoever.
\end{pro}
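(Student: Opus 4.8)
The plan is to observe that the operator we need can be taken \emph{everywhere defined}, and that for an everywhere defined operator the existence of a closable square root is blocked by an elementary domain argument together with the Closed Graph Theorem. The one observation doing all the work is this: if $D(T)=H$ and $B$ is \emph{any} square root of $T$, then by the very definition of a square root $D(B^{2})=D(T)=H$; since $D(B^{2})\subset D(B)\subset H$, this forces $D(B)=H$. Thus every square root of such a $T$ is automatically defined on all of $H$.

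First I would fix $T$ to be an everywhere defined, unbounded, non-closable operator on $H$ — for instance the operator $A$ of Example~\ref{stochel's example} (or the operator $Ax=f(x)x_{0}$ of the Example in the Introduction), which is everywhere defined, unbounded, and non-closable. Then I would argue by contradiction: suppose $B$ is a closable operator with $B^{2}=T$. By the observation above, $D(B)=H$, and then the Closed Graph Theorem forces $B$ to be bounded; precisely, $\overline{B}$ is closed with $D(\overline{B})=H$, hence $\overline{B}\in B(H)$, and from $B\subset\overline{B}$ with $D(B)=H=D(\overline{B})$ we get $B=\overline{B}\in B(H)$. Consequently $T=B^{2}\in B(H)$, contradicting the unboundedness of $T$. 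Therefore $T$ has no closable square root whatsoever.

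I would close by remarking that this produces infinitely many such $T$ (replace $T$ by $\alpha T$ with $\alpha\in\R\setminus\{0\}$, say), and by underlining the contrast with the earlier constructions: a non-closable operator may well admit non-closable $n$th roots, and yet it may fail to possess even a single closable square root. As for the main obstacle, there is essentially none here: the entire content lies in picking the right example and in the one-line remark that a square root of a totally defined operator is itself totally defined. In particular, unlike the bounded analogue established above, which rested on the deep theorem of Halmos et al.\ on the absence of bounded square roots, the present statement is completely elementary.
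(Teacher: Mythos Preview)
Your proof is correct and follows essentially the same approach as the paper's: pick an everywhere defined non-closable operator, observe that any square root must itself be everywhere defined, and invoke the Closed Graph Theorem to force boundedness. Your version is in fact slightly more streamlined---the paper chooses specifically the $A$ with $A^{2}=0$ from Example~\ref{stochel's example} and reaches $D(B)=H$ via $H=D(A^{2})=D(B^{4})\subset D(B)$, whereas you note directly that $D(B^{2})=D(T)=H$ already forces $D(B)=H$, so the nilpotence is not actually needed.
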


\begin{proof}
Let $A$ be a non-closable operator such that $D(A)=H$ and $A^2=0$
everywhere on $H$ as in Example \ref{stochel's example} (or Example
\ref{unclosable A A2=0 matrices of oper EXA}). Assume now that $B$
is a closable square root of $A$, that is, $B^2=A$. Hence
$B^4=A^2=0$ everywhere on $H$. Therefore,
\[H=D(A^2)=D(B^4)\subset D(B).\]

This means that $B$ would be everywhere defined on $H$, and by
remembering that $B$ is closable, it would ensue that $B\in B(H)$
and so $B^2\in B(H)$ too. Hence $A$ would equally be in $B(H)$, and
this is the sought contradiction. Accordingly, the non-closable $A$
does not possess any closable square root.
\end{proof}

It is known that $B\in B(H)$ is a square root of some $A\in B(H)$ if
and only if $B^*$ is a square root of $A^*$. This is not always the
case for unbounded operators.

\begin{pro}
There is a densely defined linear operator having a square root $T$
but $T^*$ is not a square root of its adjoint.
\end{pro}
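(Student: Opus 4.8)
The plan is to exhibit a concrete unbounded operator $T$ whose square $T^2$ is densely defined but for which the identity $(T^2)^* = (T^*)^2$ fails, and in fact $(T^*)^2$ is not even comparable to $(T^2)^*$ in a way that would make $T^*$ a square root of $(T^*)^*$. The natural place to look is among matrices of operators of the nilpotent type already used above, since there the adjoint of a triangular matrix is notoriously \emph{not} the transposed matrix of adjoints. So I would start from a densely defined closed unbounded operator $S$ on a dense domain $D(S)\subset H$ with $D(S^2)=\{0\}$ --- for instance the restriction $\mathcal{F}_0$ of the Fourier transform to $C_0^\infty(\mathbb{R})$, already invoked in the paper --- or more flexibly any densely defined $S$ with $D(S^2)$ not dense.

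First I would set $A = \left(\begin{smallmatrix} 0 & S \\ 0 & 0\end{smallmatrix}\right)$ acting on $H\oplus H$, so that $D(A) = H\oplus D(S)$ is dense, and compute $T := A$ together with $T^2 = A^2$. The point is that $A^2$ has domain $D(A^2)=\{(x,y): (Sy,0)\in D(A)\} = H\oplus D(S)$ and equals the zero operator there (since the range of $\left(\begin{smallmatrix}0&S\\0&0\end{smallmatrix}\right)$ lands in $H\oplus\{0\}\subset D(A)$), so $A^2$ is a perfectly good densely defined operator and $A$ is genuinely a square root of it in the sense of the paper's definition (note $D(A^2)=D(A^2)$ trivially; one must double-check the definition's requirement that $D(B^2)=D(\text{the operator})$ is automatically met by declaring the operator to \emph{be} $A^2$). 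Then I would invoke the displayed fact from the excerpt that $\left(\begin{smallmatrix}0&S\\0&0\end{smallmatrix}\right)^* = \left(\begin{smallmatrix}0&0\\S^*&0\end{smallmatrix}\right)$ when $S$ is densely defined, giving $T^* = A^* = \left(\begin{smallmatrix}0&0\\S^*&0\end{smallmatrix}\right)$, and hence $(T^*)^2 = \left(\begin{smallmatrix}0&0\\S^*&0\end{smallmatrix}\right)^2$, whose $(1,1)$-type computation forces domain $D((S^*)^2)$ in the relevant slot.

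The key step is then to show $(T^*)^2 \neq (T^2)^*$, i.e. that $T^*$ fails to be a square root of $(T^2)^* = (A^2)^*$. Since $A^2 = 0$ on the dense domain $H\oplus D(S)$, its closure is $0$ on all of $H\oplus H$, so $(A^2)^* = 0 \in B(H\oplus H)$; thus a "square root of $(T^2)^*$" in the paper's strict sense would need square equal to the everywhere-defined zero operator, hence (being a square root of $0\in B(K)$ for $K=H\oplus H$) would need domain all of $K$ --- but $T^* = \left(\begin{smallmatrix}0&0\\S^*&0\end{smallmatrix}\right)$ has domain $D(S^*)\oplus H$, which is \emph{not} all of $H\oplus H$ unless $S^* \in B(H)$, which it is not (e.g. for $S=\mathcal{F}_0$ one has $S^*$ unbounded, or one simply picks $S$ unbounded and closed so $S^*$ is unbounded). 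Therefore $(T^*)^2$ cannot equal $(T^2)^*$, since they do not even have the same domain. I expect the main obstacle to be purely bookkeeping: getting the domains of the matrix products exactly right and making sure the paper's definition of "$B$ is a square root of $A$" (which demands $B^2 = A$ with $D(B^2)=D(A)$) is being applied consistently --- in particular, whether "square root of its adjoint" means square root of $(T^2)^*$ or of $(T)^{**}{}$; I would phrase the proposition's conclusion as: $T^2$ is densely defined with $(T^2)^* = 0 \in B(H\oplus H)$, yet $T^*$ is not a square root of $(T^2)^*$ because $(T^*)^2$ is not everywhere defined, contrasting with the bounded case.
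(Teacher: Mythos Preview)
Your overall strategy---take $T=\left(\begin{smallmatrix}0&S\\0&0\end{smallmatrix}\right)$ so that $T^2$ is the zero operator on $H\oplus D(S)$, compute $(T^2)^*=0\in B(H\oplus H)$, and observe that $(T^*)^2$ lives on $D(S^*)\oplus H\subsetneq H\oplus H$---is exactly the paper's first example in spirit, and it does yield a valid proof once the corner operator $S$ is chosen correctly. Two points, however, need fixing.

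First, your concrete choice $S=\mathcal{F}_0$ does \emph{not} work. The restricted Fourier transform $\mathcal{F}_0$ is a \emph{bounded} densely defined operator (it is the restriction of a unitary), so its adjoint $\mathcal{F}_0^*=\mathcal{F}^*$ is everywhere defined; hence $D(S^*)=L^2(\mathbb{R})$ and the crucial inequality $D(S^*)\oplus H\neq H\oplus H$ fails. Nor is $\mathcal{F}_0$ closed or unbounded, contrary to what you wrote. Your fallback ``or one simply picks $S$ unbounded and closed'' is the right move: then $S^*$ is unbounded with $D(S^*)\subsetneq H$, and the argument goes through. The condition $D(S^2)=\{0\}$ that you led with is never used and should be dropped; it is a red herring here.

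Second, a bookkeeping slip: you write that $(T^*)^2$ ``forces domain $D((S^*)^2)$ in the relevant slot.'' In fact $T^*(x,y)=(0,S^*x)$ always lies in $D(S^*)\oplus H$ (since $0\in D(S^*)$), so $D((T^*)^2)=D(T^*)=D(S^*)\oplus H$; the quantity $D((S^*)^2)$ does not enter. This does not affect the conclusion, but tidy it up.

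For comparison, the paper's first example takes $S$ to be Berberian's everywhere-defined non-closable operator with $D(S^*)=\{0\}$, so that $T^2=0$ on \emph{all} of $H\oplus H$ and $D(T^*)=\{0\}\oplus H$; the paper also supplies a second, \emph{closed} example $T=\left(\begin{smallmatrix}I&A\\0&-I\end{smallmatrix}\right)$ with $T^2=\left(\begin{smallmatrix}I&0\\0&I_{D(A)}\end{smallmatrix}\right)$, which is perhaps more striking since there the failure cannot be blamed on non-closability.
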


\begin{proof}We provide two examples, one unclosable and one closed.

Consider a non closable $A$ with $D(A)=H$ and $D(A^*)=\{0\}$
(Berberian's example recalled above). Set
\[T=\left(
      \begin{array}{cc}
        0 & A \\
        0 & 0 \\
      \end{array}
    \right)
\]
where $D(T)=H\oplus H$. Then $T$ is unbounded and $T^2=0$ on
$H\oplus H$, i.e. $T$ is a square root of $0\in B(H\oplus H)$.
However, $T^*$ is not a square root of $0^*=0$ for
\[T^*=\left(
      \begin{array}{cc}
        0 & 0 \\
        A^* & 0 \\
      \end{array}
    \right)\]
is defined on $D(T^*)=\{0\}\oplus H$. Clearly
\[D[(T^*)^2]=\{(0,y)\in D(T^*):T^*(0,y)\in D(T^*)\}=D(T^*).\]
Therefore, $T^*$ cannot be a square root of $0\in B(H\oplus H)$ for
\[D[(T^*)^2]\neq B(H\oplus H).\]

Another example is to let $A$ to be an unbounded closed operator
with domain $D(A)\subset H$ and let $I$ be the identity operator in
$H$. Then set
\[T=\left(
      \begin{array}{cc}
        I & A \\
        0 & -I \\
      \end{array}
    \right)
\]
and so $T$ is closed. Hence
\[T^2=\left(
      \begin{array}{cc}
        I & A \\
        0 & -I \\
      \end{array}
    \right)\left(
      \begin{array}{cc}
        I & A \\
        0 & -I \\
      \end{array}
    \right)=\left(
      \begin{array}{cc}
        I & 0 \\
        0 & I_{D(A)} \\
      \end{array}
    \right):=S.\]
Since
\[T^*=\left(
      \begin{array}{cc}
        I & 0 \\
        A^* & -I \\
      \end{array}
    \right),\]
    it ensues that
\[T^{*2}=\left(
      \begin{array}{cc}
        I_{D(A^*)} & 0 \\
        0 & I \\
      \end{array}
    \right)\]
meaning that $T^*$ is not a square root of $S^*=\left(
      \begin{array}{cc}
        I & 0 \\
        0 & I \\
      \end{array}
    \right)$.
\end{proof}

What about closures? As is guessable, this is not the case either.

\begin{cor}
There is a densely defined linear operator having a square root $T$
but $\overline{T}$ is not a square root of its closure.
\end{cor}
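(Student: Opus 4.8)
The plan is to reuse the second (closed) example from the preceding proposition, where the failure is not merely one of ``$T^*$ being a square root of $T^{*2}$'' but genuinely of $T^*$ not being a square root of the adjoint $S^*$; then observe that for a \emph{closed} operator $T$ one has $\overline{T}=T$, so the phenomenon automatically transfers from adjoints to closures. Concretely, I would take $A$ to be an unbounded closed operator on $D(A)\subset H$, set
\[
T=\left(\begin{array}{cc} I & A \\ 0 & -I \end{array}\right)
\]
on $D(T)=H\oplus D(A)$, which is closed since it differs from the closed operator $A\oplus\{0\}$ by a bounded everywhere-defined operator (recall from the preliminaries that $T+S$ is closed when $T$ is closed and $S\in B(H)$). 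Hence $\overline{T}=T$.

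Next I would record $T^2=S$ where $S=I\oplus I_{D(A)}$, exactly as computed in the proposition, so $T$ is indeed a square root of $S$. The point is now that $\overline{S}\neq S$: since $D(S)=H\oplus D(A)$ is not closed (because $A$ is unbounded and closed, so $D(A)$ is a proper dense non-closed subspace of $H$), and $S$ acts as the identity on its domain, the closure is $\overline{S}=I\oplus I=I_{H\oplus H}\in B(H\oplus H)$, the everywhere-defined identity. Then $\overline{T}=T$ has $(\overline{T})^2=T^2=S$, and since $S\subsetneq \overline{S}$ with $D(S)\neq H\oplus H$, we get $(\overline{T})^2=S\neq \overline{S}$. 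By the definition of square root in the paper (which requires $D(B^2)=D(A)$, i.e. equality of the operators, not merely inclusion), $\overline{T}$ is not a square root of $\overline{S}$.

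The only subtlety worth stating carefully is why $D(A)$ fails to be closed: if $D(A)$ were closed then $A$, being closed with closed domain, would be bounded by the remark recalled in the introduction (a bounded operator on a closed domain is closed, and conversely a closed operator with closed domain is bounded via the closed graph theorem), contradicting the choice of $A$ as unbounded. Everything else is the routine block-matrix bookkeeping already carried out in the proof of the previous proposition, so I would simply refer back to it rather than repeat it. I do not anticipate a real obstacle here; the main thing to get right is the bookkeeping of domains so that the strict inclusion $S\subsetneq\overline{S}$ is visible, which is what makes ``$\overline{T}$ is not a square root of $\overline{S}$'' literally true under the paper's definition.
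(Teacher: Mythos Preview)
Your proposal is correct and follows essentially the same route as the paper: both take the closed block matrix $T=\left(\begin{smallmatrix} I & A \\ 0 & -I \end{smallmatrix}\right)$ from the preceding proposition, compute $\overline{T}^2=I\oplus I_{D(\overline{A})}$ (which, since $A$ is closed, is just your $T^2=S=I\oplus I_{D(A)}$), and observe this is a proper restriction of $\overline{S}=I_{H\oplus H}$. The paper's write-up keeps $\overline{A}$ in the formula rather than invoking $\overline{T}=T$ directly, but with $A$ already closed this is the same computation.
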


\begin{proof}
Consider the same example as before. Then, by considering $(T^*)^*$
or else, it is seen that
\[\overline{T}=\left(
      \begin{array}{cc}
        I & \overline{A} \\
        0 & -I \\
      \end{array}
    \right).
\]
Therefore,
\[\overline{T}^2=\left(
      \begin{array}{cc}
        I & 0 \\
        0 & I_{D(\overline{A})} \\
      \end{array}
    \right)
\]
and so $\overline{T}$ cannot be a square root of
$\overline{S}=\left(
      \begin{array}{cc}
        I & 0 \\
        0 & I \\
      \end{array}
    \right)$.
\end{proof}

Non-closable operators may have closed square roots. This is maybe
known to some readers, however, the approach here is different.

\begin{thm}
There is a densely defined non-closable operator defined formally on
$L^2(\R)\oplus L^2(\R)$ possessing a densely defined closed square
root.
\end{thm}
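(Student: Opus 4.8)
The plan is to exploit the matrix-of-operators machinery once more, but now in the spirit of the earlier example $A=\left(\begin{smallmatrix}0 & T\\ 0 & 0\end{smallmatrix}\right)$ with $T$ closed unbounded, which has closed square root... no, wait: there the square is $0$, which is unclosable. The right idea is the reverse: build a $2\times 2$ anti-diagonal operator whose \emph{square} is the non-closable operator we want, while the anti-diagonal operator itself is closed. So I would look for closed densely defined $C:D(C)\subset L^2(\R)\to L^2(\R)$ and $D:D(D)\subset L^2(\R)\to L^2(\R)$ with
\[
S:=\left(\begin{array}{cc} 0 & C\\ D & 0\end{array}\right),\qquad S^2=\left(\begin{array}{cc} CD & 0\\ 0 & DC\end{array}\right),
\]
arranged so that $CD$ (or $DC$) fails to be closable on its natural domain while $S$ is closed. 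By the remarks recalled in the excerpt, $S^*=\left(\begin{smallmatrix}0 & D^*\\ C^* & 0\end{smallmatrix}\right)$ whenever $C,D$ are densely defined, and $S$ is closed as soon as $C$ and $D$ are closed (the graph of an anti-diagonal operator is the ``swap'' of the direct sum of the two graphs, hence closed). So the closedness half is essentially free; the entire content is to make the product non-closable.

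Concretely I would take $C$ and $D$ to be two closed operators on $L^2(\R)$ whose composition $CD$ has a non-dense adjoint domain, equivalently $D(D^*C^*)$ non-dense, in fact I would aim for $D((CD)^*)=\{0\}$ by choosing $C=D$ to be (an appropriate closed version of) an operator whose square has trivial domain or trivial adjoint domain. A clean choice: let $M$ be multiplication by $e^{x^2}$ (closed, densely defined, positive, self-adjoint) and let $F$ be the unitary Fourier transform; consider $C=D=MF$ or $C=MF$, $D=FM$ — then $CD$ involves the ``$e^{x^2}\,\widehat{\phantom{f}}\,e^{x^2}$'' pattern whose natural domain is very small, and one knows (as with $\mathcal F_0^2$ in the excerpt, and the Berberian-type phenomena cited) that such products can have $\{0\}$ as adjoint domain, hence are non-closable; moreover one must check the natural domain of $CD$ is still dense so that ``non-closable'' is a meaningful statement (this is where density of $D(S^2)$ must be verified). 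Once $CD$ is densely defined but non-closable, the block-diagonal $S^2$ is non-closable (a direct sum is closable iff each summand is), and $S$ is the desired closed square root.

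The main obstacle is the verification that the off-diagonal product is simultaneously \emph{densely defined on its natural domain} and \emph{non-closable}: closedness of $C$ and $D$ does not control $CD$ at all, and pushing the adjoint domain down to $\{0\}$ (or at least to a non-dense subspace) requires a concrete analytic computation — the interplay of a rapidly growing multiplier with the Fourier transform — rather than soft arguments. I would handle it by choosing the multiplier and the conjugation so that $f\in D(CD)$ forces $f$ and a Fourier-side analogue to both decay faster than every Gaussian, pinning $D(CD)$ to a space like $C_0^\infty$-type or Hermite-type vectors (dense), while a parallel estimate shows any $g\in D((CD)^*)$ must satisfy incompatible growth constraints on both sides, forcing $g=0$. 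A secondary, purely bookkeeping point is to confirm that $S^2$ really equals the block-diagonal matrix with the natural domain — i.e. that no domain is lost in forming the product of the two anti-diagonal matrices of unbounded operators — which is routine but must be stated since products of operator matrices are not always well behaved, as the excerpt warns; here it is fine because the nonzero entries line up so that $D(S^2)=(D(DC))\oplus(D(CD))$ exactly.
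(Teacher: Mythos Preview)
Your anti-diagonal strategy is a genuine alternative to the paper's construction, but the concrete candidates you propose do not work, and this is exactly the step you flag as ``the main obstacle'' without resolving. For $C=MF$, $D=FM$ (with $M$ multiplication by $e^{x^{2}}$), note $F^{2}=P$ is the parity operator, so $CD=MPM$; a direct computation gives $(CDf)(x)=e^{2x^{2}}f(-x)$, i.e.\ $CD=M^{2}P$ on $D(M^{2})$, which is \emph{closed} (self-adjoint times unitary). Likewise $DC=FM^{2}F$ is closed. For $C=D=MF$ with the even multiplier $e^{x^{2}}$, one has $F(D(M))=F^{-1}(D(M))$, and unwinding the domains yields $D(C^{2})=F^{-1}\bigl(D((C^{*})^{2})\bigr)$; hence $D(C^{2})$ is dense iff $D((C^{*})^{2})$ is. In the dense case $(C^{*})^{2}\subset (C^{2})^{*}$ forces $C^{2}$ closable; in the non-dense case $S^{2}$ is not even densely defined. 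Either way the example fails. The heuristic about ``incompatible growth constraints on both sides'' does not survive these computations, and the anti-diagonal form makes the difficulty structural: both $CD$ and $DC$ must be densely defined while one is non-closable, a combination you have not exhibited.

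The paper avoids this entirely by taking an \emph{upper-triangular} square root
\[
R=\begin{pmatrix}B & A\\ 0 & 0\end{pmatrix},\qquad R^{2}=\begin{pmatrix}B^{2} & BA\\ 0 & 0\end{pmatrix}=:T,
\]
with $B\in B(L^{2}(\R))$ the \emph{bounded} multiplication by $e^{-x^{2}/2}$ and $A=\mathcal F^{*}C\mathcal F$ (where $C$ is multiplication by $e^{x^{2}/2}$) self-adjoint. Then $R$ is closed on $L^{2}(\R)\oplus D(A)$, and only $BA$ (domain $D(A)$, dense) appears in $R^{2}$---never $AB$. Non-closability of $T$ comes from $(BA)^{*}=A^{*}B^{*}=AB$ together with Kosaki's result $D(A)\cap D(C)=\{0\}$, which gives $D(AB)=\{0\}$ and hence $D(T^{*})=\{0\}\oplus L^{2}(\R)$. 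The asymmetry of the triangular shape, plus having one entry bounded, is precisely what lets one product have trivial domain while the other stays dense---the balancing act your symmetric ansatz cannot achieve with the operators you chose.
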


\begin{proof}First, consider two self-adjoint operators $A$ and
$B$ defined on $L^2(\R)$ and such that
\[D(AB)=\{0\}\text{ and } D(BA)=D(A)\]
where also $B\in B[L^2(\R)]$. This is highly non-trivial and most
probably original. Let us proceed to obtain such a pair. Consider
the operators $C$ and $A$:
\[Cf(x)=e^{\frac{x^2}{2}}f(x)\]
defined on $D(C)=\{f\in L^2(\R):~e^{\frac{x^2}{2}}f\in L^2(\R)\}$
and $A:=\mathcal{F}^*C\mathcal{F}$, where $\mathcal{F}$ designates
the usual $L^2(\R)$-Fourier transform. Clearly $C$ is boundedly
invertible (hence so is $A$) and
\[Bf(x):=C^{-1}f(x)=e^{\frac{-x^2}{2}}f(x)\]
is defined from $L^2(\R)$ onto $D(C)\subset L^2(\R)$.

We also know that $D(AB)$ is trivial if $D(A)\cap \ran(B)$ is so and
if $B$ is further assumed to be one-to-one (which is our case here).
But,
\[D(A)\cap
\ran(B)=D(A)\cap D(C)=\{0\},\] because this is already available to
us from \cite{KOS}. Accordingly
\[D(AB)=\{0\}.\]

Since $B$ is everywhere defined and bounded, clearly
\[D(BA)=D(A)\]
which is actually dense in $L^2(\R)$.

Now, define
\[T=\left(
      \begin{array}{cc}
        B^2 & BA \\
        0 & 0 \\
      \end{array}
    \right)\]
on $L^2(\R)\oplus D(A)$. Obviously, $T$ is densely defined. Since
$B\in B[L^2(\R)]$ and $A$ and $B$ are self-adjoint, it follows that
\[T^*=\left[\left(
      \begin{array}{cc}
        B^2 &0 \\
        0 & 0 \\
      \end{array}
    \right)+\left(
      \begin{array}{cc}
        0 & BA \\
        0 & 0 \\
      \end{array}
    \right)\right]^*=\left(\begin{array}{cc}
        B^2 &0 \\
        0 & 0 \\
      \end{array}
    \right)+\left(
      \begin{array}{cc}
        0 & 0 \\
        AB & 0 \\
      \end{array}
    \right),\]
    that is,
\[T^*=\left(
      \begin{array}{cc}
        B^2 & 0 \\
        AB & 0 \\
      \end{array}
    \right).\]
    Thus,
    \[D(T^*)=\{0\}\oplus L^2(\R)\]
    which is not dense, i.e. $T$ is not closable.

Let us now exhibit a densely defined closed square root of $T$. Let
\[R=\left(
      \begin{array}{cc}
        B & A \\
        0 & 0 \\
      \end{array}
    \right)
\]
be defined on $D(R):=L^2(\R)\oplus D(A)$. Then $R$ is closed on
$D(R)$. In addition,
\[R^2=\left(
      \begin{array}{cc}
        B^2 & BA \\
        0 & 0 \\
      \end{array}
    \right)\]
because
\begin{align*}
D(R^2)&=\{(f,g)\in L^2(\R)\times D(A):(Bf+Ag,0)\in L^2(\R)\times D(A)\}\\
&=L^2(\R)\oplus D(A)\\
&=D(T).
\end{align*}
\end{proof}

S. \^{O}ta \cite{Ota-nilpotent-idempotent} introduced the concept of
an unbounded projection or idempotent. Recall that if $T$ is a non
necessarily bounded operator with a dense domain $D(T)$, then $T$ is
said to be idempotent if $T^2$ is well defined and
\[T^2=T \text{ on }D(T).\]

S. \^{O}ta gave an example of an unclosable idempotent but did not
show any closed idempotent operator. Here we give a different
example of a non-closable idempotent as well as a closed idempotent.
These two examples are in a close relationship to the main topic of
the paper.

\begin{pro}
There are non-closable unbounded idempotent operators as well as
closed ones.
\end{pro}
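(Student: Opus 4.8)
The plan is to display two explicit operators, one for each assertion, staying within the toolkit already built above: a rank‑one operator for the non‑closable idempotent and a $2\times 2$ operator matrix for the closed one.

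\textbf{The non-closable idempotent.} I would recycle the mechanism of Example~\ref{stochel's example}, tuning the vector so that squaring collapses the operator onto itself rather than onto $0$. Let $f:H\to\C$ be a discontinuous linear functional (axiom of choice); since $f\neq 0$, pick $e\in H$ with $f(e)=1$, and define $P:H\to H$ by $D(P)=H$, $Px=f(x)e$. Then for all $x\in H$,
\[P^{2}x=P(f(x)e)=f(x)f(e)e=f(x)e=Px,\]
so $P^{2}=P$ on $D(P)=D(P^{2})=H$, i.e. $P$ is idempotent in \^{O}ta's sense. It is unbounded because $f$ is discontinuous and everywhere defined, hence non-closable by the Closed Graph Theorem; alternatively, exactly the computation of Example~\ref{stochel's example} gives $D(P^{*})=\{e\}^{\perp}$, which is not dense since $e\neq 0$, so $P$ is non-closable. (A matrix variant, consistent with Section~1, is $\bigl(\begin{smallmatrix}0&B\\0&I\end{smallmatrix}\bigr)$ with $B$ everywhere defined, unbounded and non-closable.)

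\textbf{The closed idempotent.} For the closed case I would pass to an operator matrix. Let $A$ be any densely defined closed \emph{unbounded} operator with domain $D(A)\subset H$ (e.g.\ a self-adjoint multiplication or differentiation operator on $L^{2}(\R)$), and set, on $H\oplus H$,
\[T=\begin{pmatrix}I&A\\0&0\end{pmatrix},\qquad D(T)=H\oplus D(A).\]
A short domain check gives $D(T^{2})=\{(x,y)\in D(T):(x+Ay,0)\in D(T)\}=H\oplus D(A)=D(T)$, and on this common domain
\[T^{2}=\begin{pmatrix}I&A\\0&0\end{pmatrix}\begin{pmatrix}I&A\\0&0\end{pmatrix}=\begin{pmatrix}I&A\\0&0\end{pmatrix}=T,\]
so $T$ is idempotent; it is unbounded (because $A$ is) and densely defined (because $D(A)$ is). For closedness I would write $T=(I\oplus 0)+\bigl(\begin{smallmatrix}0&A\\0&0\end{smallmatrix}\bigr)$: a direct graph argument shows $\bigl(\begin{smallmatrix}0&A\\0&0\end{smallmatrix}\bigr)$ is closed whenever $A$ is, and adding the bounded everywhere defined $I\oplus 0$ preserves closedness (as recalled in the Introduction).

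\textbf{Where the care lies.} The only genuinely delicate points are the domain bookkeeping $D(T^{2})=D(T)$ — one must verify composition does not shrink the domain, which here works precisely because the lower row of $T$ is $0$, so the range of $T$ sits in $H\oplus\{0\}\subset H\oplus D(A)$ — and the fact that, as the paper already warns, products, adjoints and closedness of operator matrices can misbehave; accordingly I would establish the closedness of $\bigl(\begin{smallmatrix}0&A\\0&0\end{smallmatrix}\bigr)$ straight from the definition of a closed operator rather than invoke a general matrix principle. Everything else is routine.
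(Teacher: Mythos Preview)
Your proof is correct. For the closed idempotent you give exactly the paper's construction $T=\bigl(\begin{smallmatrix}I&A\\0&0\end{smallmatrix}\bigr)$ with $A$ closed and unbounded, with the same domain check $D(T^{2})=D(T)$; your closedness argument via $T=(I\oplus 0)+\bigl(\begin{smallmatrix}0&A\\0&0\end{smallmatrix}\bigr)$ is a minor variation on the paper's bare assertion that $T$ is closed.

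For the non-closable idempotent you take a different route. The paper reuses the \emph{same} matrix template $\bigl(\begin{smallmatrix}I&A\\0&0\end{smallmatrix}\bigr)$, now with $A$ non-closable, and verifies non-closability of $T$ directly by pushing a witnessing sequence $(0,x_{n})$ through. Your rank-one construction $Px=f(x)e$ with $f(e)=1$ is more self-contained: it lives on $H$ rather than $H\oplus H$, it ties back neatly to Example~\ref{stochel's example} by flipping the normalisation from $f(e)=0$ to $f(e)=1$, and the computation $D(P^{*})=\{e\}^{\perp}$ goes through unchanged (the key point being that $\langle Px,y\rangle=f(x)\overline{\langle y,e\rangle}$ is continuous in $x$ iff $\langle y,e\rangle=0$, regardless of the value of $f(e)$). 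The paper's approach has the virtue of uniformity --- one template for both cases --- while yours gives a lower-dimensional, arguably more transparent example. Your parenthetical matrix variant $\bigl(\begin{smallmatrix}0&B\\0&I\end{smallmatrix}\bigr)$ also works and is a harmless reflection of the paper's $\bigl(\begin{smallmatrix}I&A\\0&0\end{smallmatrix}\bigr)$.
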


\begin{proof}
Let $A$ be an unbounded closed operator with domain $D(A)\subset H$
and let $I$ be the identity operator on all of $H$. Set
\[T=\left(
      \begin{array}{cc}
        I & A \\
        0 & 0 \\
      \end{array}
    \right)
\]
and so $D(T)=H\times D(A)$. Then $T$ is densely defined, closed and
unbounded. Since
\[D(T^2)=\{(x,y)\in H\times D(A):(x+Ay,0)\in H\times D(A)\}=D(T),\]
we see that
\[T^2=\left(
      \begin{array}{cc}
        I & A \\
        0 & 0 \\
      \end{array}
    \right)\left(
      \begin{array}{cc}
        I & A \\
        0 & 0 \\
      \end{array}
    \right)=\left(
      \begin{array}{cc}
        I & A \\
        0 & 0 \\
      \end{array}
    \right)=T.\]

In other words $T$ is idempotent. Once we have seen one example,
others come to mind (e.g. $A$ may be replaced by $\alpha A$ say
where $\alpha\neq0$). For example, let $T$ be such that $T^2=T$. If
$U$ is unitary, then $U^*TU$ too is a densely defined closed
idempotent. The density of $D(U^*TU)$ is easily seen. Since $TU$ is
closed and $U^*$ is invertible, it follows that $U^*TU$ remains
closed. Finally, observe that
\[(U^*TU)^2=U^*TUU^*TU=U^*T^2U=U^*TU.\]

A similar idea is used to the non closable case. Indeed, define
\[T=\left(
      \begin{array}{cc}
        I & A \\
        0 & 0 \\
      \end{array}
    \right)
\]
on $D(T)=H\times D(A)$ where this time $A$ is \textit{not closable}.
Then $T$ too is not closable. Indeed, there is a sequence $(x_n)$ in
$D(A)$ such that $x_n\to 0$, $Ax_n\to y$ yet $y\neq0$ (by the non
closability of $A$). Next, $(0,x_n)\in D(T)$, $(0,x_n)\to (0,0)$ and
$T(0,x_n)=(Ax_n,0)\to (y,0)\neq (0,0)$. This proves the non
closability of $T$. That $T^2=T$ may be checked as above. Therefore,
$T$ is a densely defined non closable idempotent operator.
\end{proof}

Now, we treat some related results to nilpotence and invertibility.
Let $N\in B(H)$ be nilpotent and let $I\in B(H)$ be the identity
operator. Then, it is known that $I\pm N$ are invertible. For
example, the inverse of $I-N$ is given by $I+N+N+\cdots+N^p$ if
$p+1$ is the index of nilpotence of $N$.

What about unbounded nilpotent operators?

\begin{pro}\label{with DEHIMI I}
There exist closed as well as non closable nilpotent unbounded
operators $N$ such that $I+N$ is not boundedly invertible.
\end{pro}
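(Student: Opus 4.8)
The plan is to assemble explicit witnesses for the two cases from material already set up in the paper rather than to prove anything deep: the proposition is essentially a cautionary example showing that the bounded fact ``$I+N$ is invertible whenever $N$ is nilpotent'' collapses completely in the unbounded setting, for two quite different reasons.

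\textbf{Non-closable case.} Let $A$ be the everywhere defined, unbounded, non-closable operator with $A^{2}=0$ from Example~\ref{stochel's example} (or Example~\ref{unclosable A A2=0 matrices of oper EXA}), and set $N=A$. Then $D(N^{2})=D(N)=H$ and $N^{2}=0$ on $H$, so $N$ is a non-closable unbounded nilpotent operator. Since $I\in B(H)$, if $I+N$ were closable then $N=(I+N)-I$ would be closable too, which is absurd; hence $I+N$ is not even closed, so $\sigma(I+N)=\C$ and in particular $I+N$ is not boundedly invertible (recall bounded invertibility forces closedness). Replacing $N$ by $\alpha N$ with $\alpha\in\R\setminus\{0\}$ yields infinitely many such operators.

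\textbf{Closed case.} Fix any unbounded closed operator $T$ on a dense domain $D(T)\subset H$; note $D(T)\neq H$, for otherwise the Closed Graph Theorem would force $T\in B(H)$. Put
\[N=\left(\begin{array}{cc} 0 & T \\ 0 & 0 \end{array}\right),\qquad D(N)=H\oplus D(T).\]
A direct check of graph-closedness reduces to that of $T$, so $N$ is closed; it is clearly unbounded; and, exactly as in the operator-matrix example recorded earlier in this section, $N^{2}=0$ on $D(N^{2})=D(N)$, so $N$ is a closed unbounded nilpotent operator. Now
\[I+N=\left(\begin{array}{cc} I & T \\ 0 & I \end{array}\right)\]
on $D(N)$, and for $(x,y)\in D(N)$ we have $(I+N)(x,y)=(x+Ty,\,y)$, whence $\ran(I+N)=H\oplus D(T)\subsetneq H\oplus H$. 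Thus $I+N$ is not surjective, hence not bijective, hence not boundedly invertible. Scaling $T$ again produces infinitely many examples.

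\textbf{Point to watch.} The only mild subtlety is to notice that in the closed case $I+N$ is itself a closed operator (the sum of a closed and a bounded one), so it is \emph{not} disqualified by non-closedness the way the non-closable example is; the genuine obstruction there is the loss of surjectivity coming from $\ran(I+N)\subseteq H\oplus D(T)$. Once this is pinned down, everything else — closedness of $N$ and the domain identity $D(N^{2})=D(N)$ — is the same routine verification already carried out for the analogous matrix examples above.
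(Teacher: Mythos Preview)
Your proof is correct and uses the same two constructions as the paper: the everywhere defined non-closable $N$ with $N^{2}=0$ for the first case, and the upper-triangular block matrix $N=\left(\begin{smallmatrix}0&T\\0&0\end{smallmatrix}\right)$ for the closed case, with non-surjectivity of $I+N$ coming from $\ran(I+N)=H\oplus D(T)$. The only minor difference is in the non-closable case: the paper argues that $(I+N)^{2}=I+2N$ would have to be boundedly invertible yet fails to be closable, whereas you observe more directly that $I+N$ itself cannot be closable (since $N=(I+N)-I$ would then be closable); your route is a touch cleaner but the content is the same.
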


\begin{proof}We start with the case of non closable nilpotent
operators. Let $N$ be an unbounded non closable operator such that
$D(N)=H$ and $N^2=0$ everywhere on $H$. Then, $I+N$ cannot be
boundedly invertible for it were, it would ensue that $(I+N)^2$ too
is boundedly invertible. However,
\[(I+N)^2=I+2N+N^2=I+2N\]
(all full equalities) is not even closable while boundedly
invertible operators must be closed.

Consider now the case of a closed nilpotent operator. The simplest
example to think of perhaps is:
\[N=\left(
      \begin{array}{cc}
        0 & A \\
        0 & 0 \\
      \end{array}
    \right)
\]
defined on $D(N)=H\oplus D(A)$, where $A$ is an unbounded closed
operator with domain $D(A)$. If $I_{H\oplus H}$ is the identity on
$H\oplus H$, then
\[I_{H\oplus H}+N=\left(
                    \begin{array}{cc}
                      I & A \\
                      0 & I \\
                    \end{array}
                  \right)
\]
is not boundedly invertible as it is not surjective (observe that it
is injective).
\end{proof}

\begin{rema}
In both cases above, $I+N$ are invertible.
\end{rema}

Let us remain in the context of nilpotence a little longer. Recall
that if $N\in B(H)$ is nilpotent, then
\[\sigma(N)=\{0\}.\]
Such is not always the case in case of unbounded closed operators.

\begin{pro}\label{pmlopmlopmlopmlopmlopmlokjhyutyghfgtrft}
There are nilpotent closed operators $N$ such that
$\sigma(N)\neq\{0\}$.
\end{pro}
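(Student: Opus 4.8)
The plan is to reuse the upper--triangular block construction that has served throughout the paper. Let $A$ be any densely defined, closed, \emph{unbounded} operator on $H$ with domain $D(A)\subset H$, and set
\[N=\left(\begin{array}{cc} 0 & A \\ 0 & 0 \end{array}\right)\]
on $D(N)=H\oplus D(A)$, which is dense in $H\oplus H$. First I would check that $N$ is closed: if $(x_n,y_n)\to(x,y)$ in $H\oplus H$ and $N(x_n,y_n)=(Ay_n,0)\to(z,0)$, then $y_n\to y$ and $Ay_n\to z$, so the closedness of $A$ gives $y\in D(A)$ and $Ay=z$, i.e. $(x,y)\in D(N)$ and $N(x,y)=(z,0)$. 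Next I would determine $D(N^2)$: a pair $(x,y)\in D(N)$ lies in $D(N^2)$ iff $N(x,y)=(Ay,0)\in D(N)=H\oplus D(A)$, which is automatic since $0\in D(A)$; hence $D(N^2)=D(N)$ and $N^2=0$ there, so $N$ is genuinely a closed nilpotent (index--two) operator in the paper's sense.

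It then remains to see that $\sigma(N)\neq\{0\}$, and here the recalled identity $\sigma(N^2)=[\sigma(N)]^2$ (available because $N$ is closed) does the work. Since $A$ is closed and unbounded, its domain $D(A)$ cannot be closed in $H$, for otherwise the Closed Graph Theorem would force $A\in B(H)$. Consequently $D(N^2)=H\oplus D(A)$ is not closed in $H\oplus H$, and therefore the bounded operator $N^2=0$, being defined on a non-closed domain, fails to be closed (a bounded operator is closed iff its domain is closed). Hence $\sigma(N^2)=\C$, whence $[\sigma(N)]^2=\C$, and in particular $\sigma(N)\neq\{0\}$, since $\{0\}^2=\{0\}\neq\C$.

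Alternatively, and this would pin the spectrum down exactly, one may argue directly that $\sigma(N)=\C$: the operator $N$ is not injective (it annihilates $H\oplus\{0\}$), so $0\in\sigma(N)$, while for $\lambda\neq 0$ the equation $(\lambda I-N)(x,y)=(u,v)$ forces $y=v/\lambda$, which need not lie in $D(A)$ as $D(A)\subsetneq H$; thus $\lambda I-N$ is not surjective and $\lambda\in\sigma(N)$. I expect the only mildly delicate point to be the bookkeeping of $D(N^2)$ together with the passage from ``$D(A)$ non-closed'' to ``$D(N^2)$ non-closed''; everything else is routine. As before, one gets infinitely many such $N$ by replacing $A$ with $\alpha A$ for $\alpha\neq 0$.
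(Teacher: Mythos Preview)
Your proposal is correct and follows essentially the same route as the paper: the same block operator $N=\left(\begin{smallmatrix}0&A\\0&0\end{smallmatrix}\right)$ on $H\oplus D(A)$, the same observation that $N^2$ is unclosed (hence $\sigma(N^2)=\C$), and the same appeal to $\sigma(N^2)=[\sigma(N)]^2$ to conclude $\sigma(N)\neq\{0\}$. Your write-up is more explicit about verifying closedness of $N$ and non-closedness of $D(A)$, and your direct alternative computation showing in fact $\sigma(N)=\C$ is a nice addition not present in the paper.
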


\begin{proof}
Let $T$ be any (unbounded) closed operator with a domain
$D(T)\subset H$. Set $N=\left(
                                                                                                  \begin{array}{cc}
                                                                                                    0 & T \\
                                                                                                    0 & 0 \\
                                                                                                  \end{array}
                                                                                                \right)$ with $D(N)=H\oplus D(T)$.

                                                                                                Since $N$ is closed, we know that $\sigma(N^2)=[\sigma(N)]^2$.
But,
\[N^2=\left(                                           \begin{array}{cc}
                                                                                                    0 & 0_{D(T)} \\
                                                                                                    0 & 0 \\
                                                                                                  \end{array}
                                                                                                \right)\]
with $D(N^2)=D(N)$ and so $N$ is nilpotent. Since $N^2$ is clearly
unclosed, it results that $\sigma(N^2)=\C$. If $\sigma(N)=\{0\}$,
 then we would have $[\sigma(N)]^2=\{0\}$ as well, and this is absurd. Therefore, $\sigma(N)\neq\{0\}$, as needed.
\end{proof}

It is known that if $A,N\in B(H)$ are such that $AN=NA$ and $N$ is
nilpotent, then $\sigma(A+N)=\sigma(A)$ (see e.g. Exercise 7.3.29 in
\cite{Mortad-Oper-TH-BOOK-WSPC} for an interesting proof).

Is the previous result valid in the context of one unbounded
operator? Since commutativity in this case means $NA\subset AN$, we
need to treat the case of the nilpotence of $N$ as well as that of
$A$.

First, we have:

\begin{thm}\label{nilpotent NA subset AN THM spc(A+N)=spec A}
Let $N\in B(H)$ be nilpotent and let $A$ be a densely defined closed
operator such that $NA\subset AN$. Then
\[\sigma(A+N)=\sigma(A).\]
\end{thm}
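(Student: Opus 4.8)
The plan is to mimic the classical bounded-operator argument (perturbation by a commuting nilpotent does not move the spectrum) while being careful about domains, since $A$ is only closed and densely defined. The key structural facts I would use are: (i) $N\in B(H)$, so $A+N$ is closed on $D(A+N)=D(A)$, and likewise $\lambda I-A-N$ is closed with the same domain as $\lambda I-A$; (ii) from $NA\subset AN$ one gets, by an easy induction on $k$, that $N^kA\subset AN^k$, and in particular $N^{p}A\subset AN^{p}=0$ where $p$ is the index of nilpotence ($N^p=0$); (iii) the commutation also passes to $\lambda I-A$: $N(\lambda I-A)\subset(\lambda I-A)N$ for every $\lambda\in\C$.

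The heart of the argument is to show $\rho(A)\subseteq\rho(A+N)$ and, symmetrically, $\rho(A+N)\subseteq\rho(A)$. For the first inclusion, fix $\lambda\in\rho(A)$ and let $R=(\lambda I-A)^{-1}\in B(H)$, so $(\lambda I-A)R=I$ and $R(\lambda I-A)\subset I$. From $N(\lambda I-A)\subset(\lambda I-A)N$ one deduces the resolvent commutation $RN=NR$ on $H$: apply $R$ on the left and on the right to the inclusion and use $(\lambda I-A)R=I$; the domain bookkeeping here is where I must be most careful, but it works precisely because $R$ is everywhere defined and bounded. Once $RN=NR$ with $N$ nilpotent of index $p$, the operator $M:=R\bigl(I+NR+N^2R^2+\cdots+N^{p-1}R^{p-1}\bigr)\in B(H)$ is a natural candidate for $(\lambda I-A-N)^{-1}$: formally $M=R(I-NR)^{-1}$ since $(NR)^p=0$. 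I would then verify directly that $(\lambda I-A-N)M=I$ on $H$ and $M(\lambda I-A-N)\subset I$ on $D(A)$, using $(\lambda I-A)R=I$, $R(\lambda I-A)\subset I$, and $RN=NR$ to telescope the sum. This gives $\lambda\in\rho(A+N)$.

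For the reverse inclusion I would apply the same reasoning with the roles reversed: $A=(A+N)+(-N)$, where $-N\in B(H)$ is again nilpotent of index $p$, and $(-N)(A+N)\subset(A+N)(-N)$ — this last commutation follows from $NA\subset AN$ together with $N(N)=NN$ and the fact that $N$ is everywhere defined, so $N(A+N)\subset AN+N^2\subset(A+N)N$. Hence by the first part applied to $A+N$ and $-N$ we get $\rho(A+N)\subseteq\rho(A)$. Combining the two inclusions yields $\rho(A)=\rho(A+N)$, i.e. $\sigma(A+N)=\sigma(A)$.

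The main obstacle I anticipate is the domain bookkeeping in establishing $RN=NR$ from the inclusion $N(\lambda I-A)\subset(\lambda I-A)N$, and then in checking $M(\lambda I-A-N)\subset I$ rather than equality: one must track that for $x\in D(A)$ every intermediate vector stays in $D(A)$, which uses $N^kA\subset AN^k$ repeatedly and the fact that $R$ maps $H$ into $D(A)$. The nilpotence index $p$ enters only to guarantee the Neumann-type series for $(I-NR)^{-1}$ terminates, so no convergence issue arises; the entire difficulty is purely the care needed with unbounded-operator inclusions, and the everywhere-definedness and boundedness of $N$ and $R$ is exactly what makes it go through.
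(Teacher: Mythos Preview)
Your proposal is correct, and the domain bookkeeping goes through exactly as you anticipate: from $NA\subset AN$ one gets that $N$ maps $D(A)$ into $D(A)$, hence $NRx\in D(A)$ for all $x$, and the identity $RN=NR$ on $H$ follows; the telescoping verification of $(\lambda I-A-N)M=I$ and $M(\lambda I-A-N)\subset I$ is then routine. The reverse inclusion via $A=(A+N)+(-N)$ is also fine.

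However, your route is genuinely different from the paper's. The paper does not build the resolvent by hand. Instead it invokes as a black box the spectral inclusion of Arendt--R\"abiger--Sourour: if $B\in B(H)$ and $BA\subset AB$, then $\sigma(A+B)\subset\sigma(A)+\sigma(B)$. Since $N\in B(H)$ is nilpotent, $\sigma(N)=\{0\}$, giving $\sigma(A+N)\subset\sigma(A)$ immediately; the reverse inclusion comes from the same symmetry trick you use, applying the inclusion to $A+N$ and $-N$. What your approach buys is self-containment: you reprove, in the special nilpotent case, exactly the piece of the Arendt et al.\ result that is needed, via an explicit finite Neumann series for $(\lambda I-A-N)^{-1}$, so no external citation is required and the mechanism is transparent. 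What the paper's approach buys is brevity: two lines once the cited theorem is on the table, with no domain chasing at all. Both arguments share the same symmetry step for the second inclusion.
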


For the proof, we must fall back on the following auxiliary result:

\begin{thm}\label{arendt et al. THM}(\cite{Arendt-et
al AX+XB=Y}) If $B\in B(H)$ and commutes with an unbounded $A$, i.e.
$BA\subset AB$, then
\[\sigma(A+B)\subset \sigma(A)+\sigma(B)\]
 holds.
\end{thm}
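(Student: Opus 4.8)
The plan is to prove the inclusion by producing, for every scalar $\nu\notin\sigma(A)+\sigma(B)$, an explicit bounded two-sided inverse of $\nu I-(A+B)$ via the Riesz holomorphic functional calculus of the bounded operator $B$. First I would unwind the hypothesis: $\nu\notin\sigma(A)+\sigma(B)$ means that for every $\mu\in\sigma(B)$ one has $\nu-\mu\notin\sigma(A)$, i.e. $(\nu-\mu)I-A$ is boundedly invertible. Since $B\in B(H)$, its spectrum $\sigma(B)$ is compact, while $\rho(A)$ is open because $A$ is closed; hence the compact set $\nu-\sigma(B)=\{\nu-\mu:\mu\in\sigma(B)\}$ is contained in the open set $\rho(A)$, and one can choose a positively oriented contour $\Gamma$ surrounding $\sigma(B)$ and lying in a neighbourhood $U$ of $\sigma(B)$ so small that $\nu-z\in\rho(A)$ for every $z$ in the closed region bounded by $\Gamma$. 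On that region the $B(H)$-valued function $z\mapsto[(\nu-z)I-A]^{-1}$ is holomorphic (analyticity of the resolvent), and $z\mapsto(zI-B)^{-1}$ is holomorphic off $\sigma(B)$.

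Next I would define the candidate inverse
\[S=\frac{1}{2\pi i}\oint_\Gamma [(\nu-z)I-A]^{-1}(zI-B)^{-1}\,dz,\]
which converges in operator norm and hence lies in $B(H)$. The computation rests on one commutation fact, which I would record first: since $B\in B(H)$ and $BA\subset AB$, the bounded operator $B$ commutes with the resolvent of $A$, and likewise $(zI-B)^{-1}$ commutes with $A$ in the sense $(zI-B)^{-1}A\subset A(zI-B)^{-1}$. Writing $\nu I-(A+B)=[(\nu-z)I-A]+(zI-B)$ and using this commutation, the integrand obeys the key algebraic identity
\[[\nu I-(A+B)]\,[(\nu-z)I-A]^{-1}(zI-B)^{-1}=(zI-B)^{-1}+[(\nu-z)I-A]^{-1}.\]
Integrating over $\Gamma$ and invoking the functional calculus, $\frac{1}{2\pi i}\oint_\Gamma(zI-B)^{-1}\,dz=I$ (the value at the constant function $1$), while $\frac{1}{2\pi i}\oint_\Gamma[(\nu-z)I-A]^{-1}\,dz=0$ by Cauchy's theorem, the $A$-resolvent being holomorphic inside $\Gamma$. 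This yields $[\nu I-(A+B)]S=I$.

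Finally I would establish the reverse relation $S[\nu I-(A+B)]\subset I$, which is where the unbounded side requires care. For $x\in D(A+B)=D(A)$ (the domain is $D(A)$ since $B$ is bounded), I would push $(zI-B)^{-1}$ and the $A$-resolvent past $\nu I-(A+B)$ using the same commutation, obtaining the pointwise identity $[(\nu-z)I-A]^{-1}(zI-B)^{-1}[\nu I-(A+B)]x=\{(zI-B)^{-1}+[(\nu-z)I-A]^{-1}\}x$, and then integrate to get $S[\nu I-(A+B)]x=x$. Together with $[\nu I-(A+B)]S=I$ this shows that $\nu I-(A+B)$ is boundedly invertible, i.e. $\nu\in\rho(A+B)$. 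As $\nu\notin\sigma(A)+\sigma(B)$ was arbitrary, $\sigma(A+B)\subset\sigma(A)+\sigma(B)$.

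I expect the main obstacle to be the domain bookkeeping in the last step: unlike $[\nu I-(A+B)]S=I$, where both factors may be read as bounded operators, the relation $S[\nu I-(A+B)]\subset I$ forces one to track that $(zI-B)^{-1}$ and the $A$-resolvent preserve $D(A)$ and genuinely commute with the unbounded $A$ there, and to justify interchanging the contour integral with the (unbounded) application of $\nu I-(A+B)$. A secondary technical point is the construction of $\Gamma$ inside $\rho(A)\cap(\C\setminus\sigma(B))$ and the operator-norm convergence of the integral, both of which follow from compactness of $\sigma(B)$, openness of $\rho(A)$, and continuity of the two resolvents along $\Gamma$.
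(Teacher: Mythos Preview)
The paper does not supply its own proof of this theorem: it is quoted as an auxiliary result from the external reference \cite{Arendt-et al AX+XB=Y} and used without argument (the only additional remark is that the case $\sigma(A)=\C$ is trivial). There is therefore nothing in the present paper to compare your argument against.

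That said, your contour-integral construction via the Riesz--Dunford functional calculus of the bounded operator $B$ is the standard route to this inclusion and is, in outline, correct; it is essentially the approach of the original Arendt--R\"abiger--Sourour paper. Two small points are worth tightening. First, the nontrivial case tacitly requires $A$ to be closed (so that $\rho(A)$ is open and $A+B$ is closed); you use this when choosing $\Gamma$ and should state it. Second, in passing from the pointwise identity
\[
[\nu I-(A+B)]\,[(\nu-z)I-A]^{-1}(zI-B)^{-1}=(zI-B)^{-1}+[(\nu-z)I-A]^{-1}
\]
to $[\nu I-(A+B)]S=I$, the interchange of the closed operator $\nu I-(A+B)$ with the contour integral is justified by Hille's theorem: the integrand takes values in $D(A)=D(A+B)$, and both the integrand and its image under $\nu I-(A+B)$ are norm-continuous on $\Gamma$. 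You flag this as an obstacle; it is routine once $A$ is assumed closed.
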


\begin{remas}\hfill
\begin{enumerate}
  \item In fact, the writers in \cite{Arendt-et al AX+XB=Y} established the
above result under the condition $\sigma(A)\neq \C$ which was also
imposed for other subsequent results. However, the inclusion
$\sigma(A+B)\subset \sigma(A)+\sigma(B)$ is trivial when
$\sigma(A)=\C$.
  \item This result generalizes a well known result stating that if $A$ and $B$ are in $B(H)$ and $AB=BA$, then
$\sigma(A+B)\subset \sigma(A)+\sigma(B)$ holds.
\item How about the
case of two unbounded operators? Without digging too much into the
difficult notion of strong commutativity, we give a simple example
by assuming readers have the necessary means to understand it. Let
$A$ be an unbounded self-adjoint operator with domain $D(A)$ and
such that $\sigma(A)=\R$, and let $B=-A$. Then $A$ commutes strongly
with $B$ yet
\[\sigma(A+B)\not\subset \sigma(A)+\sigma(B)\]
because $A+B$ is unclosed and hence $\sigma(A+B)=\C$ whereas
$\sigma(A)+\sigma(B)=\R$.
\end{enumerate}
\end{remas}

So much for the digression, now we prove Theorem \ref{nilpotent NA
subset AN THM spc(A+N)=spec A}.

\begin{proof}
The proof is not difficult. By Theorem \ref{arendt et al. THM}, we
know that:
\[\sigma(A+N)\subset \sigma(A)+\sigma(N)=\sigma(A).\]
Conversely,
\[\sigma(A)=\sigma(A+N-N)\subset \sigma(A+N)+\sigma(-N)=\sigma(A+N)\]
for $N$ is nilpotent and commutes with $A+N$. Therefore,
\[\sigma(A+N)=\sigma(A).\]
\end{proof}

What about the case when the nilpotent operator is the unbounded
one?

\begin{pro}There are linear operators $A$ and $N$, where $A\in B(H)$ and $A$ is densely defined and
closed, obeying $NA\subset AN$ and yet
\[\sigma(A+N)\neq\sigma(A).\]
\end{pro}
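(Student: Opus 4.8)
The plan is to reuse a closed unbounded nilpotent operator already constructed in this paper and to pair it with the most harmless bounded operator available, the identity. Concretely, let $T$ be any unbounded closed operator with dense domain $D(T)\subset H$ and put
\[N=\left(\begin{array}{cc} 0 & T \\ 0 & 0 \end{array}\right),\qquad D(N)=H\oplus D(T),\]
which is closed, unbounded and satisfies $N^{2}=0$ on $D(N)$, exactly as in Proposition~\ref{with DEHIMI I} (and Proposition~\ref{pmlopmlopmlopmlopmlopmlokjhyutyghfgtrft}). Then take $A=I_{H\oplus H}\in B(H\oplus H)$. So $A$ is bounded while $N$ is densely defined, closed and nilpotent.

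The commutation hypothesis costs nothing: computing on the natural domains one finds $NA=NI=N=IN=AN$ (same domain $D(N)$, same action), so $NA\subset AN$ holds. It remains to separate the spectra. On the one side, $\sigma(A)=\sigma(I_{H\oplus H})=\{1\}$. On the other side,
\[A+N=I_{H\oplus H}+N=\left(\begin{array}{cc} I & T \\ 0 & I \end{array}\right)\]
is injective but not surjective, because a preimage of $(u,v)$ would force $v\in D(T)$ while $D(T)\neq H$; hence $A+N$ is not bijective and therefore not boundedly invertible. Unravelling the definition of $\rho(\cdot)$ at $\lambda=0$, this says exactly that $0\in\sigma(A+N)$. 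Since $0\neq 1$ we get $\sigma(A+N)\neq\sigma(A)$, as claimed; replacing $T$ by $\alpha T$ with $\alpha\neq 0$ (or adjoining further closed nilpotent blocks) yields many such pairs.

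There is no real obstacle here. The only points needing a shade of care are the routine check on natural domains that $NA$ and $AN$ actually coincide --- so that the inclusion $NA\subset AN$ is genuine rather than vacuous --- and the translation of ``$I+N$ is not boundedly invertible'' into ``$0\in\sigma(A+N)$'' through the definition of the resolvent set. Both are immediate. The content of the example is conceptual: Theorem~\ref{nilpotent NA subset AN THM spc(A+N)=spec A} breaks down once it is the nilpotent, rather than the bounded, operator that is unbounded, since an unbounded nilpotent perturbation can destroy surjectivity and thereby enlarge the spectrum.
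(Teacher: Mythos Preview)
Your proof is correct and is precisely the paper's second example: $A=\tilde I=I_{H\oplus H}$ together with the off-diagonal nilpotent block $N=\left(\begin{smallmatrix}0&T\\0&0\end{smallmatrix}\right)$, with the same non-surjectivity observation already made in Proposition~\ref{with DEHIMI I}. The paper additionally records the even simpler choice $A=0$ (with the same $N$) and, by quoting \cite{Hardt-Konstantinov-Spectrum-product}, gets the stronger statement $\sigma(\tilde I+N)=\C$ rather than just $0\in\sigma(\tilde I+N)$; your direct argument is more self-contained but otherwise identical in spirit.
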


\begin{proof}The simplest example is to take $A=0$ and $N$ as in Proposition \ref{pmlopmlopmlopmlopmlopmlokjhyutyghfgtrft}. Then trivially
$AN\subset NA$ holds. Besides, $\sigma(N)\neq \{0\}$. Hence
\[\sigma(A+N)=\sigma(N)\neq \{0\}=\sigma(A).\]

Another "richer" example is based on one which appeared in
\cite{Hardt-Konstantinov-Spectrum-product} with a different aim.
Let $T$ be a closed, unbounded and boundedly invertible operator
with domain $D(T)\subset H$. Define on $H\oplus H$
\[A=\left(
      \begin{array}{cc}
        I & T \\
        0 & T \\
      \end{array}
    \right)\text{ and }B=\left(
      \begin{array}{cc}
        I & 0 \\
        0 & T^{-1} \\
      \end{array}
    \right)
\]
with $D(A)=H\oplus D(T)$ and $D(B)=H\oplus H$. Then $A$ is closed on
$D(A)$ and $B$ is everywhere defined and bounded on $H\oplus H$.
Then
\[ BA=\left(
      \begin{array}{cc}
        I & T \\
        0 & I \\
      \end{array}
    \right)\]
 and $\sigma(BA)=\C$ (see \cite{Hardt-Konstantinov-Spectrum-product} for further details).

 Now, write
\[\left(
      \begin{array}{cc}
        I & T \\
        0 & I \\
      \end{array}
    \right)=\underbrace{\left(
      \begin{array}{cc}
        I & 0 \\
        0 & I \\
      \end{array}
    \right)}_{=\tilde{I}}+\underbrace{\left(
      \begin{array}{cc}
        0 & T \\
        0 & 0 \\
      \end{array}
    \right)}_{=N}.\]
    Accordingly,
\[\sigma(\tilde{I}+N)=\C\neq \{1\}=\sigma(\tilde{I})\]
yet $\tilde{I}$ is everywhere defined and bounded, and it plainly
commutes with $N$.
\end{proof}

Now, we give some more results as regards invertibility.

\begin{thm}\label{Main THM}
Let $T$ be a non necessarily bounded, closed and densely defined
operator on a Hilbert space $H$. Assume that $T$ has a densely
defined and closed square root $S$, that is, $S^2=T$. Then $T$ is
boundedly invertible if and only if $S$ is boundedly invertible. In
such case, $S^{-1}$ is always a square root of $T^{-1}$.
\end{thm}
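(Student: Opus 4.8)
The plan is to prove the two directions of the equivalence separately and then verify the final assertion about $S^{-1}$. For the easy direction, suppose $S$ is boundedly invertible, so there is $C\in B(H)$ with $SC=I$ and $CS\subset I$. Then $C^2\in B(H)$, and I would check $TC^2 = S^2C^2 = S(SC)C = SC = I$, using that $C$ maps $H$ into $D(S)$ and that $SC=I$ on all of $H$. For the reverse inclusion $C^2 T\subset I$: if $x\in D(T)=D(S^2)$, then $x\in D(S)$ and $Sx\in D(S)$, so $CS x = x$ and $C^2 S^2 x = C(CS)Sx = C S x = x$; hence $C^2T\subset I$. So $T$ is boundedly invertible with $T^{-1}=C^2=(S^{-1})^2$, which also proves the last sentence of the theorem in this direction.

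For the harder direction, assume $T=S^2$ is boundedly invertible. The natural idea is that $0\in\rho(T)$ should force $0\in\rho(S)$ via the spectral mapping relation for squares. Concretely, since $S$ is closed and densely defined, Kulkarni et al.'s identity (recalled in the excerpt) gives $\sigma(S^2)=[\sigma(S)]^2$, i.e. $\sigma(T)=[\sigma(S)]^2$. Because $T$ is boundedly invertible, $0\notin\sigma(T)=[\sigma(S)]^2$, and $0$ lies in $[\sigma(S)]^2$ as soon as $0\in\sigma(S)$; hence $0\notin\sigma(S)$, i.e. $S$ is boundedly invertible. I would note here that one must be slightly careful about what $[\sigma(S)]^2$ means — it is $\{\lambda^2:\lambda\in\sigma(S)\}$ — but this causes no trouble for the point $0$. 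The main obstacle, and the step to write out with some care, is precisely the justification that Kulkarni's identity applies: $S$ is assumed closed and densely defined, which is exactly the hypothesis of that result, so $\sigma(S^2)=[\sigma(S)]^2$ is legitimate, and the deduction $0\notin\sigma(T)\Rightarrow 0\notin\sigma(S)$ is immediate.

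Once both directions are in hand, the final claim is essentially already established: in the nontrivial direction we have just shown $S$ is boundedly invertible, so by the easy direction $T^{-1}=(S^{-1})^2$, meaning $S^{-1}$ is a square root of $T^{-1}$. To present this cleanly I would structure the proof as: (i) assume $S$ boundedly invertible and derive $T^{-1}=(S^{-1})^2\in B(H)$ by the direct computation above; (ii) assume $T$ boundedly invertible and invoke $\sigma(S^2)=[\sigma(S)]^2$ to conclude $0\in\rho(S)$; (iii) combine (i) and (ii) to read off that $S^{-1}$ is always a square root of $T^{-1}$ whenever either (equivalently both) operator is boundedly invertible. The only genuinely delicate bookkeeping is in step (i), making sure domains match so that the relations $TC^2=I$ and $C^2T\subset I$ hold with the correct (inclusion versus equality) signs; everything else is formal.
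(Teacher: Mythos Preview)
Your argument is correct. The easy direction and the final claim match the paper's treatment (you are just more explicit about the domain bookkeeping for $TC^2=I$ and $C^2T\subset I$), but for the hard direction you take a genuinely different route. The paper argues constructively: from $S^2=T$ it forms $ST^{-1}$, observes that this operator is closed (as $S$ is closed and $T^{-1}\in B(H)$) and everywhere defined (since $H=D(S^2T^{-1})\subset D(ST^{-1})$), hence bounded by the Closed Graph Theorem; this exhibits $S$ as right invertible, and since $S$ and $S^{*}$ are both injective (inherited from the injectivity of $T$ and $T^{*}$), an external result on one-sided invertibility upgrades this to full bounded invertibility. You instead invoke the spectral identity $\sigma(S^2)=[\sigma(S)]^2$ for closed $S$ (the Kulkarni--Nair--Ramesh result recalled in the introduction) and read off $0\notin\sigma(S)$ from $0\notin\sigma(T)$. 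Your route is shorter and avoids the adjoint and right-inverse machinery; the paper's route, on the other hand, is more explicit in that it actually identifies $S^{-1}=ST^{-1}$ directly, and it does not need the spectral mapping theorem as a black box (though it trades that for a different cited lemma). Both are perfectly valid; yours is arguably the cleaner presentation given that the spectral identity is already available in the paper.
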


\begin{proof}If $S$ has an everywhere defined bounded inverse, then $S^2$ or
$T$ too has an everywhere defined bounded inverse.

Conversely, assume that $T$ is boundedly invertible. Since $T$ is
one-to-one and $S^2=T$, it follows that $S$ is also one-to-one. By
passing to adjoints and taking into account the "one-to-oness" of
$T^*$, we easily see that $S^*$ is one-to-one. Now, clearly
\[S^2=T\Longrightarrow S(ST^{-1})=S^2T^{-1}=I\]
where $I$ the identity on $H$.  The aim is to show that $ST^{-1}\in
B(H)$. By the general theory, $ST^{-1}$ is closed for $T^{-1}\in
B(H)$. Besides,
\[H=D(S^2T^{-1})\subset D(ST^{-1})\]
and so
\[D(ST^{-1})=H.\]

By the Closed Graph Theorem, $ST^{-1}$ is in effect in $B(H)$.
Therefore, $S$ is right invertible. As $\ker S=\ker(S^*)$, then
Theorem 2.3 in \cite{Dehimi-Mortad-2018} tells us that $S$ is
(fully) invertible, marking the end of the proof.
\end{proof}

A related result is the following:

\begin{pro}Let $T$ and $S$ be two densely defined linear operators
such that $S^2=T$. If $T$ is right invertible, so is $\overline{S}$
whenever $S$ is closable.
\end{pro}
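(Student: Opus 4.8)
The plan is to exploit the factorization $T = S^2 = S\cdot S$ together with the hypothesis that $T$ is right invertible. By definition of right invertibility, there is some $B\in B(H)$ with $TB = I$ on all of $H$. Writing $TB = S(SB)$ we would like to read off a bounded right inverse of $S$ (or of $\overline S$); the subtlety is that $SB$ need not be bounded or everywhere defined, so the first task is to show that in fact $SB$ \emph{is} everywhere defined and then invoke closedness plus the Closed Graph Theorem, exactly as in the proof of Theorem \ref{Main THM}.

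First I would record that from $TB = I$ we get, for every $x\in H$, that $Bx\in D(T) = D(S^2)\subset D(S)$, so $SB$ is defined on all of $H$; and moreover $S(SBx) = TBx = x$, so $SBx\in D(S)$ as well, i.e. $SB$ maps $H$ into $D(S)$. Next I would pass to the closure: since $S\subset\overline S$ we have $\overline S(\,SBx\,) = S(SBx) = x$ for all $x\in H$, so the operator $C := SB$ satisfies $\overline S\, C = I$ on $H$ with $D(C) = H$. It remains to see that $C$ is bounded. Here the natural move is to use that $\overline S$ is closed, hence $\overline S\,C$ is closed whenever $C$ is closed; but what we actually have is that $C$ has an everywhere defined, and its composition with the closed operator $\overline S$ equals the bounded operator $I$.

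So the key step — and the one I expect to be the main obstacle — is promoting $C = SB$ from "everywhere defined" to "bounded". The cleanest route mimics Theorem \ref{Main THM}: one shows $C$ is closed by a graph argument (if $x_n\to 0$ and $Cx_n\to z$ in $H$, then since $\overline S$ is closed and $\overline S(Cx_n) = x_n\to 0$, one gets $z\in D(\overline S)$ and $\overline S z = 0$; if additionally $\overline S$ is injective this forces $z = 0$), and then the Closed Graph Theorem gives $C\in B(H)$, whence $\overline S$ is right invertible with right inverse $C = SB$. Injectivity of $\overline S$ follows from injectivity of $S$, which in turn follows because $T = S^2$ is injective (being right invertible is not automatic here, but $T$ right invertible makes $T$ surjective, not injective — so one may instead argue directly: if $Cx_n\to z$ and $x_n\to 0$ then for the purpose of the Closed Graph Theorem it suffices to work with the \emph{graph} of $C$, using that $\overline S$ closed plus $\overline S C = I$ forces the graph of $C$ to be closed even without injectivity, since any limit point $(0,z)$ satisfies $\overline S z=0$ and $z = \lim x_n$ lies in the range description forced by $I$). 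I would present the injectivity-based version if it is available, and otherwise note that $T$ right invertible already yields enough: $C = SB$ is a genuine right inverse candidate for $\overline S$, and the Closed Graph Theorem applies once we check the graph of $C$ is closed, which is where closedness of $\overline S$ does the work. Finally I would remark that closability of $S$ is exactly what is needed to make $\overline S$ a meaningful closed operator to which these arguments (Closed Graph Theorem, "product with a bounded-range map is closed") apply; without it there is no closed operator in sight and "right invertible" in the sense of the paper (requiring a \emph{bounded} right inverse) cannot even be formulated for the closure.
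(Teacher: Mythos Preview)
Your overall strategy matches the paper's: from $TB=I$ deduce that $SB$ is everywhere defined, then show it is bounded via the Closed Graph Theorem, yielding a bounded right inverse for $\overline S$. The execution, however, has a real gap at the ``key step''.

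Your graph argument for closedness of $C=SB$ uses $\overline S(Cx_n)=x_n\to 0$ together with closedness of $\overline S$ to conclude $\overline S z=0$, and then you need $z=0$. That last step requires $\overline S$ to be injective, which you do not have: right invertibility of $T$ makes $T$ surjective, not injective, and there is no reason $\overline S$ should be one-to-one. Your attempts to sidestep this (``the graph of $C$ is closed even without injectivity'') are not arguments; as written, the proof does not close.

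The fix is much simpler than the route you attempted, and it is exactly what the paper does. Since $\ran B\subset D(S^2)\subset D(S)\subset D(\overline S)$, the operators $SB$ and $\overline S B$ coincide on all of $H$. Now use the standard fact (recalled in the paper's introduction) that the product of a closed operator with a bounded everywhere defined operator, in that order, is closed: hence $\overline S B$ is closed, everywhere defined, and therefore bounded by the Closed Graph Theorem. Since $\overline S(\overline S B)=I$, this exhibits $\overline S B\in B(H)$ as a right inverse of $\overline S$. No injectivity is needed anywhere.
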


\begin{proof}Since $S^2=T$ and $S\subset \overline{S}$, it follows
that $T\subset \overline{S}^2$. By the right invertibility of $T$,
we obtain $I\subset \overline{S}^2B$ for some $B\in B(H)$. That is,
\[\overline{S}~\overline{S}B=\overline{S}^2B=I.\]

Since $\overline{S}B$ is closed and $H=D(\overline{S}B)$, clearly
$\overline{S}B\in B(H)$. Therefore, $\overline{S}$ is right
invertible.
\end{proof}

\begin{rema}The converse being untrue as seen by taking $T=S=I_D$
(the identity restricted to some domain $D$). Then $S^2=T$ yet
$\overline{S}=I$ is right invertible whilst $T$ is not.
\end{rema}

The next result is easily shown and so we omit its proof.

\begin{thm}
Let $S$ and $T$ be two linear operators such that $S^2=T$. If $S$ is
right invertible, so is $T$. If $T$ is right invertible, so is $S$
if $S$ is closable. If $S$ is left invertible, then so is $T$.
\end{thm}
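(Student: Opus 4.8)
The plan is to verify each of the three implications directly, using only the definitions of left/right invertibility recalled in the introduction together with elementary facts about products of operators; no deep machinery should be needed, which is why the paper flags the proof as routine.

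First, suppose $S$ is right invertible, so there is a $B\in B(H)$ with $SB=I$ (everywhere defined). Then $S^2(BB)=S(SB)B=SB=I$, so $T(BB)=I$ with $BB\in B(H)$, giving that $T$ is right invertible. Here one must be slightly careful that the product $S^2(BB)$ is genuinely $I$ on all of $H$ and not merely on a subdomain: since $SB=I$ on $H$, for every $x\in H$ we have $Bx\in D(S)$ and $SBx=x$, hence $BBx\in D(S)$ with $S(BBx)=Bx$, and then $Bx\in D(S)$ again with $S(Bx)=x$; thus $(BBx)\in D(S^2)=D(T)$ and $T(BBx)=x$ for all $x$. This is the only place where a little attention to domains is required.

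Second, suppose $T$ is right invertible and $S$ is closable; I would show $\overline S$ is right invertible, exactly as in the Proposition immediately preceding this theorem. From $S^2=T$ and $S\subset\overline S$ we get $T\subset \overline S^{\,2}$, so $I\subset \overline S^{\,2}B$ for some $B\in B(H)$, i.e. $\overline S(\overline S B)=I$ on $H$. Since $\overline S B$ is closed (as $B\in B(H)$ and $\overline S$ is closed) and is defined on all of $H$, the Closed Graph Theorem forces $\overline S B\in B(H)$, so $\overline S$ is right invertible. (I would simply cite the previous proposition here rather than repeat the argument.)

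Third, suppose $S$ is left invertible, so there is $C\in B(H)$ with $CS\subset I$. Then $CCS^2 = C(CS)S \subset CS \subset I$, because $CS\subset I$ gives $CSx=x$ for $x\in D(S)$, hence for $x\in D(S^2)$ we have $Sx\in D(S)$ and $C(CSx)=C(Sx)$... more cleanly: on $D(S^2)=D(T)$ we have $S^2x\in H$, and $(CC)(S^2x) = C\bigl(C(S(Sx))\bigr) = C(Sx) = x$ since $Sx\in D(S)$ and $x\in D(S)$. Thus $(CC)T\subset I$ with $CC\in B(H)$, so $T$ is left invertible. There is no real obstacle anywhere in this theorem; the only thing to watch is that all the manipulations with $CS\subset I$ and $SB=I$ respect the natural domains, and since $B^2$ and $C^2$ are everywhere defined bounded operators and $D(S^2)=D(T)$ by hypothesis, every composition above is legitimate. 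One should also note the asymmetry the statement already reflects: going from $T$ right invertible down to $S$ needs closability (a counterexample without it is $T=S=I_D$, as in the remark just above), whereas the left-invertible direction needs no such hypothesis.
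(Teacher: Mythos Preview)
Your arguments for the first and third implications are correct and are exactly the kind of routine verifications the paper has in mind (the paper in fact omits the proof entirely).

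For the second implication, however, you prove only that $\overline{S}$ is right invertible, which is precisely the content of the preceding Proposition, whereas the Theorem asserts that $S$ itself is right invertible. These are not equivalent: a right inverse $C\in B(H)$ for $\overline{S}$ satisfies $\overline{S}C=I$, but nothing forces $\ran C\subset D(S)$, so $SC$ need not equal $I$. The slip occurs when you replace $S^2=T$ by the weaker inclusion $T\subset\overline{S}^{\,2}$, which throws away exactly the domain information you need. Work with $S$ directly: from $TB=I$ on $H$ and $D(T)=D(S^2)$ we get $Bx\in D(S^2)\subset D(S)$ for every $x\in H$, so $SB$ is everywhere defined and $S(SB)=S^2B=TB=I$. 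Since $S$ is closable, $SB\subset\overline{S}B$ with $\overline{S}B$ closed, so $SB$ is closable with domain $H$; by the Closed Graph Theorem $SB\in B(H)$, and $S$ is right invertible with right inverse $SB$.

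A minor side remark: your closing comment misreads the example $T=S=I_D$. In that example $T=I_D$ is \emph{not} right invertible (there is no $B\in B(H)$ with $I_DB=I$ unless $D=H$); the Remark in the paper uses it to show that the converse of the Proposition fails ($\overline{S}=I$ is right invertible while $T$ is not), not to illustrate the necessity of closability in the present Theorem.
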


Before stating and proving a result about normal and self-adjoint
square roots, we give some auxiliary result whose proof is very
simple and so it is omitted. It is worth noticing in passing that
there are unbounded self-adjoint operators $A$ and $B$ such that
$A+iB\subset 0$ (where 0 designates the zero operator on all of
$H$), yet $A\not\subset 0$ and $B\not\subset 0$. For example, let
$A$ and $B$ be unbounded self-adjoint operators such that $D(A)\cap
D(B)=\{0_H\}$ (see e.g. \cite{KOS}). Assuming $D(A)=D(B)$ makes the
whole difference. Indeed:

\begin{pro}\label{kkkkkkkkkkkkkkkkkkkkkkkkkkk}
Let $A$ and $B$ be two densely defined symmetric operators with
domains $D(A),D(B)\subset H$ respectively. Assume that $D(A)=D(B)$.
If $A+iB\subset 0$, then $A\subset 0$ and $B\subset 0$. If $A$ (or
$B$) is further taken to be closed, then $A=B=0$ everywhere on $H$.
\end{pro}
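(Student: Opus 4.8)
The plan is to exploit the symmetry of $A$ and $B$ together with the hypothesis $D(A)=D(B)$. Write $D:=D(A)=D(B)$ and note that the assumption $A+iB\subset 0$ means precisely that $D(A+iB)=D(A)\cap D(B)=D$ and $Ax+iBx=0$ for every $x\in D$. First I would fix an arbitrary $x\in D$ and compute the inner product $\langle (A+iB)x,x\rangle$, which vanishes. Using that $A$ and $B$ are symmetric, the quantities $\langle Ax,x\rangle$ and $\langle Bx,x\rangle$ are both real, so taking real and imaginary parts of $0=\langle Ax,x\rangle+i\langle Bx,x\rangle$ yields $\langle Ax,x\rangle=0$ and $\langle Bx,x\rangle=0$ for all $x\in D$.

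The next step is to upgrade "the quadratic form vanishes" to "the operator vanishes on $D$". For a symmetric operator this is the standard polarization argument: from $\langle A(x+y),x+y\rangle=0$ and $\langle A(x+iy),x+iy\rangle=0$ for $x,y\in D$ (all these vectors lie in $D$ since $D$ is a subspace), expanding and using symmetry gives $\langle Ax,y\rangle=0$ for all $x,y\in D$. Since $A$ is densely defined, $D$ is dense in $H$, so $\langle Ax,y\rangle=0$ for all $y$ in a dense set forces $Ax=0$. Hence $A\subset 0$, and the identical argument gives $B\subset 0$; equivalently $iBx=-Ax=0$ on $D$, so $B\subset 0$ as well.

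For the last sentence, suppose in addition $A$ (say) is closed. We now know $A=0$ on its dense domain $D$, i.e. $A$ is a restriction of the everywhere-defined zero operator; in particular $A$ is bounded on $D$. Since $A$ is closed and bounded, its domain must be closed (as recalled in the Introduction, citing Theorem 5.2 in \cite{Weidmann}); a closed dense subspace of $H$ is all of $H$, so $D=H$ and $A=0$ on $H$. Then $D(B)=D=H$ as well, and since $B\subset 0$ on $H=D(B)$ we get $B=0$ everywhere on $H$. (Symmetrically, if instead $B$ is assumed closed the same reasoning applies.) There is no real obstacle here; the only point to be careful about is that the polarization identities are legitimate because $D$ is a subspace, so every combination $x+y$, $x+iy$ used really lies in the common domain — which is exactly where the hypothesis $D(A)=D(B)$ (rather than merely $D(A)\cap D(B)$ dense) does the work.
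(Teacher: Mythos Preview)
Your argument is correct; the polarization step is exactly the right way to pass from $\langle Ax,x\rangle=0$ on $D$ to $Ax=0$ on $D$, and the closedness argument for the last sentence is the one the paper itself recalls from \cite{Weidmann}. The paper actually omits the proof of this proposition altogether (it is announced as an ``auxiliary result whose proof is very simple and so it is omitted''), so there is nothing to compare against beyond noting that your write-up is precisely the kind of routine verification the author had in mind; if anything, one could shorten it slightly by observing directly that $Ax=-iBx$ on $D$ together with the symmetry of $A$ and $B$ forces $\langle x,By\rangle=-\langle x,By\rangle$ for all $x,y\in D$, but your quadratic-form route is equally valid.
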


The following result generalizes one in
\cite{Frid-Mortad-Dehimi-nilpotence}.

\begin{thm}\label{main THMMMMMMMMMMMMMMMM SQ RT}
Let $T=A+iB$ where $A$ and $B$ are self-adjoint (one of them is also
positive), $D(A)=D(B)$ and $D(AB)=D(BA)$. If $T^2=S$, where $S$ is
symmetric, then $T$ is normal. In particular, if $S$ is self-adjoint
and positive, then $T$ is self-adjoint and positive, i.e. $T$ is the
unique square root of $S$.
\end{thm}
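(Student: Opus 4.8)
The plan is to push everything down to the Cartesian components $A,B$ and then recognise normality from a reducing decomposition of $H$.

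First I would compute $T^2$ on its natural domain. Using $D(A)=D(B)$ together with $D(AB)=D(BA)$, one identifies $D(T^2)=D(A^2)\cap D(B^2)\cap D(AB)$ and obtains there
\[T^2=(A^2-B^2)+i(AB+BA).\]
Since $A$ and $B$ are self-adjoint, $A^2$ and $B^2$ are positive self-adjoint, so $A^2-B^2$ is symmetric; and $D(AB)=D(BA)$ forces $AB+BA$ to be symmetric too (its adjoint contains $BA+AB$), and it is densely defined because $D(T^2)=D(S)$ is dense, $S$ being symmetric. So $S$ appears as ``symmetric plus $i$ times symmetric'' on the dense subspace $D(T^2)$.

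Next I would use the symmetry of $S$ to kill the skew part. For every $x\in D(T^2)$ the number $\langle Sx,x\rangle$ is real, whereas $\langle(A^2-B^2)x,x\rangle$ and $\langle(AB+BA)x,x\rangle$ are separately real; hence $\langle(AB+BA)x,x\rangle=0$ for all such $x$, and the complex polarization identity upgrades this to
\[AB+BA=0\qquad\text{on }D(T^2).\]
Now positivity enters. Suppose $B\geq 0$ (the argument for $A\geq 0$ is symmetric). From $AB=-BA$ one gets $AB^2=(AB)B=-(BA)B=-B(AB)=B^2A$, i.e.\ $A$ commutes with $B^2$; since $B=\sqrt{B^2}$, the spectral theorem gives that $A$ commutes with $B$, so $AB=BA$, which combined with $AB=-BA$ forces $AB=BA=0$. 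Consequently $\overline{\ran A}\subset\ker B$ and $\overline{\ran B}\subset\ker A$; these two closed subspaces are orthogonal, and
\[H=\overline{\ran A}\oplus\overline{\ran B}\oplus(\ker A\cap\ker B)\]
reduces both $A$ and $B$. On the first summand $T=A$, on the second $T=iB$, and on the third $T=0$: a self-adjoint operator, $i$ times a positive self-adjoint operator, and the zero operator. Each of these is closed and normal, hence so is their orthogonal direct sum $T$. For the last assertion, assume moreover that $S$ is self-adjoint and positive. Restricting to the summand $\overline{\ran B}$ gives $S=T^2=(iB)^2=-B^2\leq 0$; positivity of $S$ then forces $B^2=0$ there, so $B$ vanishes on $\overline{\ran B}$, and since $B$ vanishes on the complementary subspace as well, $B=0$ on $H$. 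Thus $T=A$ is self-adjoint with $A^2=S$, and, $A$ being the positive component, $T=A$ is the unique positive square root of $S$.

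The step I expect to be the genuine obstacle is the passage from the formal identity $AB+BA=0$ on $D(T^2)$ to the operator equalities $AB=BA=0$: promoting ``$A$ commutes with $B^2$'' to a true strong (spectral) commutation, so that one may legitimately replace $B^2$ by $\sqrt{B^2}=B$, is exactly where the hypotheses $D(A)=D(B)$ and $D(AB)=D(BA)$ must be brought to bear, and it is the analytic heart of the proof. The same domain identities are also what make the identification of $D(T^2)$ and the expansion of $T^2$ in the first step legitimate.
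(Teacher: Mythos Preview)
Your route to normality is sound in outline but genuinely different from the paper's. Both arguments begin the same way: expand $T^2$, use the symmetry of $S$ to force $AB+BA\subset 0$ (the paper isolates this as Proposition~\ref{kkkkkkkkkkkkkkkkkkkkkkkkkkk}), and then exploit positivity of one factor together with a Bernau-type square-root result to pass from commutation with the square to $AB=BA$. From that point the paper does \emph{not} combine this with $AB=-BA$ to get $AB=0$; instead it works further to show $(A+I)B=B(A+I)$ as a genuine equality, multiplies by the bounded resolvent $(A+I)^{-1}$ to obtain $(A+I)^{-1}B\subset B(A+I)^{-1}$, and invokes a standard criterion (Proposition~5.27 in \cite{SCHMUDG-book-2012}) for strong commutativity of self-adjoint operators, from which normality of $A+iB$ follows. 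Your alternative --- conclude $AB=BA=0$, decompose $H=\overline{\ran A}\oplus\overline{\ran B}\oplus(\ker A\cap\ker B)$, and exhibit $T$ as an orthogonal sum of manifestly normal pieces --- is more geometric and actually yields a bonus: since $\overline{\ran A}\subset\ker B\subset D(B)=D(A)$, the self-adjoint restriction $A|_{\overline{\ran A}}$ is everywhere defined and hence bounded, so $A$ (and by symmetry $B$, hence $T$) is bounded. The analytic crux you flag --- upgrading the formal commutation with the square to a genuine operator commutation --- is precisely where the paper cites \cite{Bernau JAusMS-1968-square root}, so your instinct about the obstacle is correct.

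There is, however, a slip in your final paragraph. You chose to illustrate with $B\geq 0$, but then write ``$A$ being the positive component'' to conclude $T=A\geq 0$. In the case you treat, $B$ is the positive one, and nothing forces $A\geq 0$: take $A=-I$ and $B=0$; then all hypotheses hold, $S=T^2=I$ is positive self-adjoint, yet $T=-I$ is not positive. The paper avoids this by working throughout under the assumption $A\geq 0$; its proof of the last assertion uses a spectral argument ($[\sigma(T)]^2=\sigma(S)\subset[0,\infty)$ forces $\sigma(T)\subset\R$, so the normal $T$ is self-adjoint) and then reads off $T=\re T=A\geq 0$. If you rerun your decomposition argument in the case $A\geq 0$, your method for the final claim goes through: on $\overline{\ran B}$ one has $S=-B^2\leq 0$, positivity of $S$ kills $B$, and $T=A\geq 0$.
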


\begin{proof}
Assume that $A$ is positive (the proof in the case of the
positiveness of $B$ is similar). Let $T=A+iB$. Clearly,
\[A^2-B^2+i(AB+BA)\subset (A+iB)A+i(A+iB)B=T^2\subset S\]
thereby
\[A^2-B^2-S+i(AB+BA)\subset 0.\]
Since $D(A)=D(B)$, it is seen that $D(A^2)=D(BA)$ and that
$D(B^2)=D(AB)$. Thus,
\[D(A^2-B^2)=D(AB+BA).\]
Since $D(AB)=D(BA)$, we have
\[D(A^2-B^2-S)=D(AB+BA)=D(A^2)=D(B^2).\]

Since $A$ is self-adjoint, so is $A^2$ and in particular $A^2$ is
necessarily densely defined. Thus, $A^2-B^2$ and $AB+BA$ are both
densely defined. Now, by the symmetricity (only) of both $A$ and $B$
we have
\[AB+BA\subset A^*B^*+B^*A^*\subset (BA)^*+(AB)^*\subset (AB+BA)^*.\]
Similarly, $A^2-B^2\subset (A^2-B^2)^*$. Therefore, both $AB+BA$ and
$A^2-B^2$ are symmetric. By Proposition
\ref{kkkkkkkkkkkkkkkkkkkkkkkkkkk}, we get $AB+BA\subset 0$. Hence
$AB=-BA$ (for $D(AB)=D(BA)$) and so
\[A^2B=-ABA=BA^2.\]
As $A$ is positive, we obtain $AB=BA$ by \cite{Bernau
JAusMS-1968-square root}. Hence $AB+B=BA+B$. But $AB+B=(A+I)B$ and
$BA+B\subset B(A+I)$. Hence $(A+I)B\subset B(A+I)$. But
\[D[B(A+I)]=\{x\in D(A):Ax+x\in D(B)\}.\]

So, if $x\in D[B(A+I)]$, it follows that $x\in D(A)=D(B)$ and
\[Ax=Ax+x-x\in D(B),\]
i.e. $Ax\in D(B)$, i.e. $x\in D(BA)$. Since $D(AB)=D(BA)$, we
equally have $x\in D(AB)=D[(A+I)B]$. This actually means that
\[(A+I)B=B(A+I).\]

Since $A$ is self-adjoint and positive, it results that $A+I$ is
boundedly invertible. Right and left multiplying by $(A+I)^{-1}$
yield $(A+I)^{-1}B\subset B(A+I)^{-1}$. By Proposition 5.27 in
\cite{SCHMUDG-book-2012}, this means that $A$ commutes strongly with
$B$. Accordingly $T$ is normal.

Finally, we show the last statement. Assume that $S$ is self-adjoint
and positive (remember that $T$ is still normal). Let
$\lambda\in\sigma(T)$. Then
\[\lambda^2\in
[\sigma(T)]^2=\sigma(T^2)=\sigma(S).\]
That is, $\lambda^2\geq0$ and
so the only possible outcome is $\lambda\in\R$. Therefore, $T$ is
self-adjoint. Since in this case
\[0\leq A=\re T=\frac{T+T^*}{2}=T,\]
it follows that $T$ is also positive. This marks the end of the
proof.
\end{proof}

\begin{cor}
Let $T=A+iB$ where $A$ and $B$ are self-adjoint (one of them is also
positive)where $D(A)=D(B)$. If $T^2=0$ on $D(T)$, then $T\in B(H)$
is normal and so $T=0$ everywhere on $H$.
\end{cor}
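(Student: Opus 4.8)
The plan is to read the conclusion off Theorem \ref{main THMMMMMMMMMMMMMMMM SQ RT} by specialising $S$ to the zero operator on $D(T)$, and then to upgrade the ensuing normality of $T$ to ``$T=0$ everywhere'' using the facts about nilpotent operators recalled right after Lemma \ref{ghghhgghghghghghghghghghghghghghghgh}.

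First I would check that the hypotheses of Theorem \ref{main THMMMMMMMMMMMMMMMM SQ RT} are met here: $A$ and $B$ are self-adjoint with one of them positive, $D(A)=D(B)$, and (as a standing assumption understood in this corollary) $D(AB)=D(BA)$; moreover $S:=0$ on $D(T)=D(A)=D(B)$ is densely defined, since $A$ and $B$ are self-adjoint, and is trivially symmetric (indeed positive). As $T^2=S$, the theorem then gives that $T$ is normal.

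Second, a normal operator is in particular closed and densely defined, and ``$T^2=0$ on $D(T^2)=D(T)$'' is precisely the statement that $T$ is nilpotent in the sense of Definition \ref{Ota-nilpotent-idempotent DEFINITIONNN}. Since every normal operator is hyponormal, the remark recorded just after Lemma \ref{ghghhgghghghghghghghghghghghghghghgh} --- a closed, densely defined, nilpotent, hyponormal operator is the zero operator on $H$ --- forces $T=0$ everywhere on $H$. Hence $D(T)=H$, so $T\in B(H)$, and $T=0$ is trivially normal, which is exactly the assertion.

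I do not expect a genuine obstacle. The one point deserving a little care is the passage from ``$T^2=0$ on $D(T)$'' to ``$T=0$ on \emph{all} of $H$'' rather than merely on the dense subspace $D(T)$; this is exactly what the cited nilpotence remark delivers. If one prefers to see it directly, once $T$ is known to be normal the identities $\ker T=\ker T^*$ and $D(T^*T)=D(T^2)=D(T)$ give $T^*T=0$ on $D(T)$, so $\|Tx\|^2=\langle T^*Tx,x\rangle=0$ for all $x\in D(T)$, whence $T\subseteq 0_H$; using $D(T^*)=D(T)$ and $\|T^*x\|=\|Tx\|$ one then gets $T^*\subseteq 0_H$ as well, hence $T^*=0_H$ and finally $T=T^{**}=0_H$. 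One should, however, make sure that the hypothesis $D(AB)=D(BA)$ of Theorem \ref{main THMMMMMMMMMMMMMMMM SQ RT} is indeed available in the setting of this corollary before invoking it.
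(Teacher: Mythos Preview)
Your proposal has a genuine gap, and it is exactly the point the paper's proof addresses. You write that $D(AB)=D(BA)$ is ``a standing assumption understood in this corollary'', but it is not: the corollary's hypotheses are only that $A,B$ are self-adjoint (one positive), $D(A)=D(B)$, and $T^2=0$ on $D(T)$. The condition $D(AB)=D(BA)$ is absent, and the entire content of the paper's proof is to \emph{derive} it from the extra hypothesis $T^2=0$ on $D(T)$. You flag this concern in your final sentence but do not resolve it, so as it stands the invocation of Theorem \ref{main THMMMMMMMMMMMMMMMM SQ RT} is unjustified.

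Here is how the paper fills the gap. From $T^2=0$ on $D(T)$ one has $D(T^2)=D(T)$, so $Tx\in D(T)$ for every $x\in D(T)$. Since $D(A)=D(B)=D(T)$ and $D(T)\subset D(T^*)$, one may write $A=(T+T^*)/2$ and $B=(T-T^*)/(2i)$. Then for $x\in D(T)$,
\[
x\in D(BA)\iff Ax\in D(T)\iff Tx+T^*x\in D(T),\qquad
x\in D(AB)\iff Bx\in D(T)\iff Tx-T^*x\in D(T).
\]
Because $Tx\in D(T)$ already, both conditions are equivalent to $T^*x\in D(T)$; hence $D(AB)=D(BA)$. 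With this in hand, Theorem \ref{main THMMMMMMMMMMMMMMMM SQ RT} applies (with $S=0$ on $D(T)$, which is symmetric) and yields that $T$ is normal. Your second step --- invoking the nilpotence remark after Lemma \ref{ghghhgghghghghghghghghghghghghghghgh} to conclude $T=0$ everywhere --- is then correct and matches what the paper intends.
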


\begin{proof}
What prevents us a priori from using Theorem \ref{main
THMMMMMMMMMMMMMMMM SQ RT} is that the condition $D(AB)=D(BA)$ is
missing. But, writing $A=(T+T^*)/2$ and $B=(T-T^*)/{2i}$ (and so
$D(T)\subset D(T^*)$), we see that if $x\in D(T)$, then
\[Tx+T^*x\in D(T)\Longleftrightarrow Tx-T^*x\in D(T)\]
for $Tx\in D(T)$ (because $D(T^2)=D(T)$). In other language,
$D(AB)=D(BA)$, as needed.
\end{proof}

It is shown in (\cite{Weidmann}, Theorem 9.4) that if $A$ and $B$
are two self-adjoint positive operators with domains $D(A)$ and
$D(B)$ respectively, then
  \[D(A)=D(B)\Longrightarrow D(\sqrt{A})=D(\sqrt{B}).\]
It is therefore natural to wonder whether this property remains
valid for arbitrary square roots? That is, if $A$ and $B$ are square
roots of some $S$, i.e. $A^2=B^2=S$, is it true that $D(A)=D(B)$?

\begin{thm}
There are square roots of self-adjoint operators $S$ having
different domains. However, if the square roots are self-adjoint
then they necessarily have equal domains.
\end{thm}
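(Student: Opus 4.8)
The plan is to prove the two assertions in turn. For the positive one, the key point is that a self-adjoint square root of $S$ is forced, through its absolute value, to have the same domain as the unique positive square root $\sqrt{S}$; so all self-adjoint square roots of $S$ share the domain $D(\sqrt{S})$. For the existence statement, I would build, out of an unbounded positive self-adjoint operator, an operator matrix whose two off-diagonal slots carry genuinely different domains, so that the resulting ``off-diagonal'' square root ends up with a domain different from that of the positive square root.

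\emph{Existence.} I would start from any unbounded positive self-adjoint operator $A$ on $H$; then $\sqrt{A}$ is unbounded too, so $D(\sqrt{A})\neq H$. On $H\oplus H$ put
\[B=\left(\begin{array}{cc} 0 & A \\ I & 0 \end{array}\right),\qquad D(B)=H\oplus D(A).\]
A routine domain computation --- of the kind performed several times above for operator matrices with one bounded and one closed entry --- gives $D(B^2)=D(A)\oplus D(A)$ and
\[B^2=\left(\begin{array}{cc} A & 0 \\ 0 & A \end{array}\right)=:S,\]
self-adjoint and positive on $D(S)=D(A)\oplus D(A)$. The unique positive self-adjoint square root of $S$ is $\sqrt{A}\oplus\sqrt{A}$, with domain $D(\sqrt{A})\oplus D(\sqrt{A})$. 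Comparing first coordinates, $H\neq D(\sqrt{A})$, so $D(B)\neq D(\sqrt{A}\oplus\sqrt{A})$: the self-adjoint $S$ has two square roots with different domains. (Replacing $A$ by $\alpha A$, $\alpha>0$, would give infinitely many.)

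\emph{The self-adjoint case.} Here I would take $A$ and $B$ self-adjoint with $A^2=B^2=S$. Then $S=A^*A\geq 0$ is self-adjoint, so its unique positive square root $\sqrt{S}$ exists. As $A$ is self-adjoint, $A^*A=A^2=S$, whence $|A|:=\sqrt{A^*A}=\sqrt{S}$ by uniqueness of the positive square root; and since $A$ is closed, the polar decomposition gives $D(A)=D(|A|)=D(\sqrt{S})$. The same reasoning gives $D(B)=D(\sqrt{S})$, hence $D(A)=D(B)$. (Alternatively, from $D(A^2)=D(S)=D(B^2)$ one applies Theorem~9.4 of \cite{Weidmann}, recalled above, to get $D(\sqrt{A^2})=D(\sqrt{B^2})$, and then uses $D(\sqrt{A^2})=D(|A|)=D(A)$.)

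I expect the only delicate step to be the bookkeeping in the existence part: one must check that $D(B^2)$ is \emph{exactly} $D(A)\oplus D(A)$ and that $S=A\oplus A$ is genuinely self-adjoint, not merely symmetric. Both follow immediately from how domains compose for operator matrices with a bounded entry, but these are the places where carelessness could creep in. The self-adjoint case presents no real obstacle, reducing entirely to the standard facts that $D(A)=D(|A|)$ for closed $A$ and that a positive self-adjoint operator has a unique positive square root.
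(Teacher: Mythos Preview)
Your proposal is correct and follows essentially the same approach as the paper. For the existence part, the paper likewise builds $S=T\oplus T$ for an unbounded self-adjoint $T$ and exhibits the off-diagonal matrices $\left(\begin{smallmatrix}0&T\\I&0\end{smallmatrix}\right)$ and $\left(\begin{smallmatrix}0&I\\T&0\end{smallmatrix}\right)$ (as well as $\sqrt{T}\oplus\sqrt{T}$ when $T\geq0$) as square roots with pairwise distinct domains; for the self-adjoint case, the paper runs exactly your chain $D(A)=D(|A|)=D(\sqrt{A^2})=D(\sqrt{B^2})=D(|B|)=D(B)$.
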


\begin{proof}Let $T$ be any unbounded self-adjoint operator with domain
$D(T)\subsetneq H$ and set $S=\left(
                    \begin{array}{cc}
                      T & 0 \\
                      0 & T \\
                    \end{array}
                  \right)$ where $D(S)=D(T)\oplus D(T)$. Then both
                  $A:=\left(
                        \begin{array}{cc}
                          0 & T \\
                          I & 0 \\
                        \end{array}
                      \right)$ and $B:=\left(
                        \begin{array}{cc}
                          0 & I \\
                          T & 0 \\
                        \end{array}
                      \right)$ are square roots of $S$ yet
\[D(A)=H\oplus D(T)\neq D(T)\oplus H=D(B).\]
By taking $T$ to be further positive, it is seen that $\left(
                    \begin{array}{cc}
                      \sqrt{T} & 0 \\
                      0 & \sqrt{T} \\
                    \end{array}
                  \right)$ (where $\sqrt{T}$ represents here the unique positive square root of $T$) is yet another square root of $S$ whose
                  domain is different from both $D(A)$ and $D(B)$.

To deal with the second statement, remember first that if $T$ is
closed and densely defined, then $D(T)=D(|T|)$. Now, let $A$ and $B$
be two self-adjoint square roots of some (necessarily self-adjoint
and positive) $S$, i.e. $A^2=B^2=S$. Then $D(A^2)=D(B^2)$ and so
\[D(A)=D(|A|)=D(\sqrt{A^2})=D(\sqrt{B^2})=D(|B|)=D(B),\]
as needed.
\end{proof}

\begin{rema}
It is well known that if $S$ is a positive self-adjoint which
commutes with some $R\in B(H)$, i.e. $RS\subset SR$, then
$R\sqrt{S}\subset \sqrt S R$ where $\sqrt S$ designates the unique
positive self-adjoint square root of $S$. See e.g.
\cite{Sebestyen-Tarcsay-self-adjoint square ROOT} for a new proof.

What about arbitrary roots? The answer is again negative. For
instance, take again $S=\left(
                    \begin{array}{cc}
                      T & 0 \\
                      0 & T \\
                    \end{array}
                  \right)$ as in the previous proof and set $U=\left(
                                                                 \begin{array}{cc}
                                                                   0 & I \\
                                                                   I & 0 \\
                                                                 \end{array}
                                                               \right)$.
                                                               Hence
                                                               $U\in
                                                               B(H\oplus
                                                               H)$,
                                                               in
                                                               fact
                                                               $U$
                                                               is a
                                                               fundamental
                                                               symmetry
                                                               (it is both
                                                               self-adjoint
                                                               and
                                                               unitary).
                                                               Then
                                                               $US\subset
                                                               SU=\left(
                                                                 \begin{array}{cc}
                                                                   0 & T \\
                                                                   T & 0 \\
                                                                 \end{array}
                                                               \right)$.
                                                               However,
                                                               $U$
                                                               does
                                                               not
                                                               commute
                                                               with
                                                               $A$
                                                               for
\[UA=\left(
                    \begin{array}{cc}
                      I & 0 \\
                      0 & T \\
                    \end{array}
                  \right)\text{ while }AU=\left(
                    \begin{array}{cc}
                      T & 0 \\
                      0 & I \\
                    \end{array}
                  \right).\]
\end{rema}

\begin{rema}In fact, the previous question does not even hold on
finite dimensional spaces. Just consider:
\[S=\left(
                    \begin{array}{cc}
                      a & 0 \\
                      0 & a \\
                    \end{array}
                  \right),~U=\left(
                                                                 \begin{array}{cc}
                                                                   0 & 1 \\
                                                                   1 & 0 \\
                                                                 \end{array}
                                                               \right)\text{ and }A=\left(
                        \begin{array}{cc}
                          0 & a \\
                          1 & 0 \\
                        \end{array}
                      \right)\]
                      where $a\in\C$.
\end{rema}

\begin{pro}
Let $A$ and $B$ be two (closed) quasinormal operators such that
$A^2=B^2$. Then $D(A)=D(B)$.
\end{pro}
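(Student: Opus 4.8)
The plan is to reduce the statement to a comparison of the moduli $|A|=\sqrt{A^*A}$ and $|B|=\sqrt{B^*B}$. Recall that for a closed densely defined operator $T$ one has $D(T)=D(|T|)$; hence it suffices to prove that $|A|=|B|$, and in fact only $D(|A|)=D(|B|)$ is needed. Write the polar decompositions $A=U|A|$ and $B=V|B|$. Quasinormality says precisely that $U$ commutes with the spectral measure of $|A|$ (and $V$ with that of $|B|$); in particular $U$ leaves $D(|A|)$ invariant and maps $\overline{\ran |A|}$ into itself, and similarly for $V$ and $|B|$.

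The key step is to show that a quasinormal $T$ satisfies $|T^{2}|=|T|^{2}$. First one verifies $D(T^{2})=D(|T|^{2})$: the inclusion $D(|T|^{2})\subseteq D(T^{2})$ follows from $U|T|\subseteq |T|U$, while the reverse inclusion is obtained by applying $U^{*}$, which also commutes with the spectral measure of $|T|$. On this common domain $T^{2}=U^{2}|T|^{2}$. Since $U$ is isometric on $\overline{\ran |T|}$ and maps that subspace into itself, $U^{2}$ is isometric on $\overline{\ran |T|}\supseteq\overline{\ran |T|^{2}}$; composing $U^{2}$ with the orthogonal projection onto $\overline{\ran |T|^{2}}$ yields a partial isometry with initial space $\overline{\ran |T|^{2}}$ and kernel $\ker |T|^{2}$. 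By uniqueness of the polar decomposition of $T^{2}$, this forces $|T^{2}|=|T|^{2}$. (Alternatively, the identity $|T^{2}|=|T|^{2}$ for quasinormal $T$ is available in the literature on unbounded quasinormal operators and may simply be quoted.)

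Granting this, set $S=A^{2}=B^{2}$. As powers of quasinormal operators, $S$ is closed and densely defined (this also follows from the previous step, since $S=U^{2}|A|^{2}$ with $|A|^{2}$ self-adjoint and $U^{2}$ bounded), so $|S|=\sqrt{S^{*}S}$ is a well-defined positive self-adjoint operator. By the key step, $|A|^{2}=|A^{2}|=|S|=|B^{2}|=|B|^{2}$. Thus $|A|$ and $|B|$ are both positive self-adjoint square roots of the positive self-adjoint operator $|S|$, so by uniqueness of the positive self-adjoint square root, $|A|=|B|$, and in particular $D(|A|)=D(|B|)$. Since $A$ and $B$ are closed and densely defined, $D(A)=D(|A|)=D(|B|)=D(B)$, as claimed.

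I expect the main obstacle to be the bookkeeping in the key step: identifying $D(T^{2})$ with $D(|T|^{2})=D(T^{*}T)$ and checking that the corrected $U^{2}$ is genuinely the partial isometry appearing in the polar decomposition of $T^{2}$ (correct initial space and kernel), so that the uniqueness of polar decomposition applies. Everything after that is a formal consequence of the uniqueness of positive square roots together with the identity $D(T)=D(|T|)$ for closed densely defined operators.
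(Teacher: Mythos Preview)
Your proposal is correct and follows essentially the same route as the paper: both arguments hinge on the identity $|T^{2}|=|T|^{2}$ for a closed quasinormal operator $T$, then deduce $|A|^{2}=|A^{2}|=|B^{2}|=|B|^{2}$, pass to the unique positive self-adjoint square root to get $|A|=|B|$, and conclude $D(A)=D(|A|)=D(|B|)=D(B)$. The only difference is cosmetic: the paper simply quotes the identity $|T^{n}|=|T|^{n}$ from the literature (Jab{\l}o\'nski--Jung--Stochel and Uchiyama), whereas you sketch an independent polar-decomposition proof of the case $n=2$ before noting that it can also be cited.
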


\begin{proof}For the definition of quasinormality in the unbounded
case, we refer readers to \cite{Jablonski et al 2014} or
\cite{Uchiyama-1993-QUASINORMAL}. From either of the previous two
references, we know that if $T$ is a (closed) quasinormal operator,
then $|T|^n=|T^n|$ for any $n\in\N$.

Since $A$ and $B$ are quasinormal, we have
\[A^2=B^2\Longrightarrow |A|^2=|A^2|=|B^2|=|B|^2.\]
Upon passing to the unique positive self-adjoint square root implies
that $|A|=|B|$. Hence $D(A)=D(B)$ by the closedness of both $A$ and
$B$.
\end{proof}

It is unknown to me whether the previous result is valid for the
weaker classes of subnormal (see e.g. \cite{McDonald-Sundberg:
unbounded subnormal} for its definition) or hyponormal closed
operators. Recall that a densely defined operator $A$ with domain
$D(A)$ is called hyponormal if
\[D(A)\subset D(A^*)\text{ and } \|A^*x\|\leq\|Ax\|,~\forall x\in D(A).\]

However, we have:

\begin{pro}\label{20/07/2020}
Let $A$ and $B$ be two (closed) hyponormal operators such that
$A^2=B^2$. Assume further that $A^2$ is self-adjoint. Then
$D(A)=D(B)$.
\end{pro}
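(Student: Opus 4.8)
The plan is to reduce the statement to the identity $|A|=|B|$. Indeed, being hyponormal, both $A$ and $B$ are densely defined and closed, so $D(A)=D(|A|)$ and $D(B)=D(|B|)$, where $|T|=\sqrt{T^*T}$. Writing $A^2=B^2=:S$, the operator $S$ is self-adjoint, and then $|A^2|=\sqrt{(A^2)^*A^2}=\sqrt{S^2}=|S|$ and likewise $|B^2|=|S|$; these are honest equalities of operators. Hence it suffices to prove the following: \emph{if $T$ is a closed hyponormal operator with $T^2$ self-adjoint, then $T^*T=|T^2|$}. Applying this to $T=A$ and to $T=B$ yields $A^*A=|S|=B^*B$, so $|A|=\sqrt{|S|}=|B|$ and therefore $D(A)=D(|A|)=D(|B|)=D(B)$.

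To prove $T^*T=|T^2|$ for such a $T$, the first step is to show that $T$ is reduced by every spectral projection $E_S(\Omega)$ of $S:=T^2$. The key computation is that whenever $\lambda^2\in\rho(S)$ one has $\pm\lambda\in\rho(T)$ (because $\sigma(S)=[\sigma(T)]^2$) and $(T-\lambda)(T+\lambda)=S-\lambda^2$ on $D(T^2)=D(S)$, whence $(S-\lambda^2)^{-1}=(T+\lambda)^{-1}(T-\lambda)^{-1}$. Since $T$ commutes with its own resolvents, and the same computation applied to adjoints shows $T^*$ commutes with them as well, both $T$ and $T^*$ commute with $(S-\mu)^{-1}$ for every $\mu\in\rho(S)$, hence with every bounded Borel function of $S$, in particular with each $E_S(\Omega)$. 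Consequently the decomposition $H=H_+\oplus H_-\oplus H_0$ into the spectral subspaces of $S$ for $(0,\infty)$, $(-\infty,0)$ and $\{0\}$ reduces $T$, giving $T=T_+\oplus T_-\oplus T_0$ with each summand closed and hyponormal and $T_j^2=S_{H_j}$.

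Next I would deal with the three pieces. On $H_0$ one has $T_0^2=0$, and a hyponormal (hence paranormal) operator with zero square is zero, so $T_0^*T_0=0=|S_{H_0}|$. On $H_+$ the identity $[\sigma(T_+)]^2=\sigma(T_+^2)=\sigma(S_{H_+})\subseteq[0,\infty)$ forces $\sigma(T_+)\subseteq\mathbb{R}$; using that a closed hyponormal operator with real spectrum is self-adjoint, $T_+$ is self-adjoint, so $T_+^*T_+=T_+^2=S_{H_+}=|S_{H_+}|$. On $H_-$ one similarly gets $\sigma(T_-)\subseteq i\mathbb{R}$, so $-iT_-$ is closed and hyponormal with real spectrum, hence self-adjoint; thus $T_-$ is skew-adjoint and $T_-^*T_-=-T_-^2=-S_{H_-}=|S_{H_-}|$. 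Assembling the three pieces gives $T^*T=|S|=|T^2|$, completing the reduction and hence the proof.

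The main obstacle is the fact invoked twice in the third paragraph: a closed hyponormal operator with spectrum contained in $\mathbb{R}$ is self-adjoint. In the bounded setting this is classical (a hyponormal operator whose spectrum lies on a line is normal, and a normal operator with real spectrum is self-adjoint), and the unbounded case can be reduced to it by observing that $(T_\pm\mp i)^{-1}$ is a bounded hyponormal operator whose spectrum is contained in a circle, hence normal, so that $T_\pm$ itself is normal. A secondary technical point is to make the first step fully rigorous, in particular to verify that $T^*$, and not merely $T$, commutes with the projections $E_S(\Omega)$, so that the decomposition genuinely \emph{reduces} $T$; this needs only the standard care with adjoints of products of an unbounded operator and a bounded one. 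I note in passing that the argument in fact shows any closed hyponormal $T$ with $T^2$ self-adjoint is normal — a direct sum of a self-adjoint and a skew-adjoint operator — so that one could alternatively conclude by remarking that such operators are quasinormal and appealing to the preceding Proposition.
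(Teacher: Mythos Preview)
Your argument is correct and in fact more complete than the paper's own proof. Both rest on the same key ingredient, stated as a lemma just before the proposition: a closed hyponormal operator with positive square is self-adjoint, and one with negative square is skew-adjoint (this in turn comes from the fact, cited from \cite{Dehimi-Mortad-BKMS}, that a closed hyponormal operator with real spectrum is self-adjoint). The paper then simply asserts that since $A^2=B^2$ is self-adjoint the lemma forces each of $A,B$ to be self-adjoint or skew-adjoint, and checks the resulting cases. Read literally this is a gap: the lemma requires $A^2$ (or $-A^2$) to be \emph{positive}, whereas a self-adjoint $A^2$ may have spectrum on both sides of~$0$ (take $A=A_1\oplus A_2$ with $A_1$ self-adjoint and $A_2$ skew-adjoint, both nonzero; then $A$ is normal, hence hyponormal, $A^2$ is self-adjoint, yet $A$ is neither self-adjoint nor skew-adjoint). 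Your spectral decomposition $H=H_+\oplus H_-\oplus H_0$ of $S=A^2$ is exactly what is needed to repair this: it reduces $A$ and $B$ simultaneously to pieces on which the lemma \emph{does} apply, and your identity $T^*T=|T^2|$ then reassembles the conclusion cleanly via $|A|=\sqrt{|S|}=|B|$. As you note at the end, your argument also yields the stronger fact that any closed hyponormal $T$ with self-adjoint $T^2$ is normal.

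One small remark on your sketch for the ``main obstacle'': the passage to the bounded resolvent $(T_\pm\mp i)^{-1}$ and the claim that it is hyponormal with spectrum on a circle is correct but not entirely automatic in the unbounded setting. Since the paper already cites this self-adjointness criterion directly from \cite{Dehimi-Mortad-BKMS}, you can simply quote it rather than reprove it.
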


The proof relies on the following lemma:

\begin{lem}\label{012125454878796963223656554549898}
If $A$ is a closed hyponormal operator such that $A^2$ (resp.
$-A^2$) is positive, then $A$ is self-adjoint (resp. skew-adjoint,
i.e. $A^*=-A$).
\end{lem}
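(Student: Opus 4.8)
The plan is to pass through normality of $A$ and then close the argument with a spectral-mapping step; the skew-adjoint case will be a formal consequence of the self-adjoint one. Indeed, if $-A^2\geq 0$, then $iA$ is again closed and hyponormal (hyponormality is invariant under multiplying by a scalar of modulus one, since $\|(iA)x\|=\|Ax\|$, $\|(iA)^*x\|=\|A^*x\|$ and $D(iA)=D(A)\subseteq D(A^*)=D((iA)^*)$), while $(iA)^2=-A^2\geq 0$. So once we know that a closed hyponormal operator with nonnegative square is self-adjoint, applying this to $iA$ gives $iA=(iA)^*=-iA^*$, i.e. $A^*=-A$. Hence it suffices to treat a closed hyponormal $A$ with $A^2\geq 0$ and prove $A=A^*$.

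First I would write the Cartesian decomposition $A=P+iQ$, $P=\re A$, $Q=\Ima A$ (available because hyponormality gives $D(A)\subseteq D(A^*)$), with $P,Q$ symmetric and $D(P)=D(Q)=D(A)$. Expanding $\|Ax\|^2-\|A^*x\|^2$ in terms of $P,Q$ shows that hyponormality is exactly the condition $\Ima\langle Px,Qx\rangle\geq 0$ for all $x\in D(A)$, while for $x\in D(A^2)$ one has
\[\langle A^2x,x\rangle=\langle Ax,A^*x\rangle=\|Px\|^2-\|Qx\|^2+2i\,\re\langle Px,Qx\rangle,\]
so $A^2\geq 0$ forces $\re\langle Px,Qx\rangle=0$ on $D(A^2)$ and $\|Qx\|\leq\|Px\|$ on $D(A^2)$. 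Polarizing the relation $\re\langle Px,Qx\rangle=0$ over the (dense) linear space $D(A^2)$ yields $\langle Px,Qy\rangle+\langle Qx,Py\rangle=0$ for all $x,y\in D(A^2)$, and since $D(PQ)\cap D(QP)\subseteq D(A^2)$ this produces $PQ+QP=0$ on $D(PQ)\cap D(QP)$. The aim is then to upgrade this, together with the inequality $\|Qx\|\leq\|Px\|$ (which rules out the "anticommuting" alternative, in the spirit of the use of \cite{Bernau JAusMS-1968-square root} in the proof of Theorem~\ref{main THMMMMMMMMMMMMMMMM SQ RT}), into strong commutativity of $P$ and $Q$ -- via a bounded resolvent of $P$ (or of $I+Q^2$) and Proposition~5.27 in \cite{SCHMUDG-book-2012}, exactly as there. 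Strong commutativity of the symmetric pair $P,Q$ with $D(P)=D(Q)$ is precisely normality of $A=P+iQ$.

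Granting that $A$ is normal, the conclusion is immediate. Then $A^2$ is normal, and a normal operator whose quadratic form is nonnegative has spectrum contained in $[0,\infty)$ (spectral theorem); since $A$ is closed, the identity $\sigma(A^2)=[\sigma(A)]^2$ of Kulkarni et al. \cite{Kulkrani et al-2008} then gives $[\sigma(A)]^2\subseteq[0,\infty)$, so $\sigma(A)\subseteq\R$; and a normal operator with real spectrum is self-adjoint. Thus $A=A^*$, and the skew-adjoint case follows as explained above.

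The main obstacle is the middle step -- promoting the pointwise form identities to genuine operator relations and then to strong commutativity. Two points make this delicate and have to be handled with care: (i) $D(A^2)$ need not be contained in $D(PQ)\cap D(QP)$ (for $x\in D(A^2)$ the vectors $Px,Qx$ need not lie in $D(A)$), so $PQ+QP$ may a priori live only on a proper subspace and one must argue that this subspace is still a core; and (ii) the real and imaginary parts $P=\re A$, $Q=\Ima A$ of a \emph{closed} hyponormal operator are in general only symmetric, not self-adjoint, so the tools used for self-adjoint real/imaginary parts (Bernau's theorem, Schmüdgen's strong-commutativity criterion) do not apply off the shelf. Resolving both of these is where the closedness of $A$ and, crucially, the \emph{nonnegativity} of $A^2$ (as opposed to mere symmetry of $A^2$) must be exploited.
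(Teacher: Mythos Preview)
Your reduction of the skew-adjoint case to the self-adjoint case via $iA$ is exactly what the paper does, and your final spectral step --- $A$ closed gives $[\sigma(A)]^2\subseteq\sigma(A^2)\subseteq[0,\infty)$, hence $\sigma(A)\subseteq\R$ --- is also the paper's argument. The divergence is entirely in the middle.

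You try to reach self-adjointness by first proving that $A$ is \emph{normal} (via the Cartesian decomposition $A=P+iQ$, an anticommutation relation, and then strong commutativity of $P,Q$), and only afterwards invoking ``normal with real spectrum $\Rightarrow$ self-adjoint''. You yourself flag the two obstructions, and they are genuine: there is no reason for $D(A^2)$ to sit inside $D(PQ)\cap D(QP)$, and --- more seriously --- $P=\re A$ and $Q=\Ima A$ are merely symmetric for a closed hyponormal $A$, so neither Bernau's square-root commutation theorem nor the resolvent-based strong-commutativity criterion of \cite{SCHMUDG-book-2012} is available. Nothing in the hypotheses (closedness of $A$, positivity of $A^2$) forces $P$ or $Q$ to be essentially self-adjoint, so this route, as written, does not close; you have correctly identified the gap but not filled it.

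The paper sidesteps all of this by quoting a stronger off-the-shelf fact: a closed \emph{hyponormal} operator with real spectrum is already self-adjoint (Theorem~8 in \cite{Dehimi-Mortad-BKMS}). With that in hand there is no need to pass through normality at all: one simply computes $\sigma(A)\subseteq\R$ from $\sigma(A^2)\subseteq[0,\infty)$ exactly as you do, and concludes directly. So the fix to your argument is to replace the entire Cartesian-decomposition paragraph by an appeal to that result; everything else you wrote (the $iA$ trick and the spectral-mapping step) then constitutes a complete proof matching the paper's.
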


\begin{proof} In view of the proof of Theorem 8 in
\cite{Dehimi-Mortad-BKMS}, closed hyponormal operators having a real
spectrum are self-adjoint. This result may be used to show that
closed hyponormal operators with a purely imaginary spectrum are
skew-adjoint. Indeed, let $A$ be a closed hyponormal operator such
that $\sigma(A)$ is purely imaginary. Then set $B=iA$ and so $B$
remains hyponormal. Hence $\sigma(B)\subset \R$ since by hypothesis
$\sigma(A)\subset i\R$. Thus $B$ is self-adjoint, i.e.
\[-iA^*=B^*=B=iA,\]
i.e. $A$ is clearly skew-adjoint.

Now, let $\lambda\in\sigma(A)$. Since $A$ is closed, we have that
$\lambda^2\in\sigma(A^2)$, i.e.  $\lambda^2\geq0$ as $A^2$ is
positive. But, this forces $\lambda$ to be real. Accordingly, $A$ is
self-adjoint. When $-A^2$ is positive, it may be shown that $A$ is
skew-adjoint, and the proof is over.
\end{proof}

Now we prove Proposition \ref{20/07/2020}:

\begin{proof}Since $A^2=B^2$ are self-adjoint and $A$ and $B$ are hyponormal, Lemma
\ref{012125454878796963223656554549898} says that $A$ and $B$ are
self-adjoint or skew-adjoint. In all possible cases, we may obtain
$D(A)=D(B)$.
\end{proof}

What about
  \[D(A)=D(B)\Longrightarrow D(A^2)=D(B^2)?\]
This is not true even when $A$ and $B$ are self-adjoint. Let us give
a counterexample.

\begin{pro}\label{19/07/2020}
There exist unbounded self-adjoint positive operators $A$ and $B$
such that $D(A)=D(B)$ yet $D(A^2)\neq D(B^2)$.
\end{pro}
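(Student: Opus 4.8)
The plan is to produce $B$ as a \emph{bounded self-adjoint perturbation} of a fixed unbounded positive self-adjoint $A$, arranged so that $B$ annihilates one cleverly chosen vector that fails to lie in $D(A^{2})$. First I would dispose of the positivity requirement by a shift: if $A$ and $B$ are unbounded self-adjoint with $D(A)=D(B)$ and $D(A^{2})\neq D(B^{2})$, then for $c>0$ large enough $A+cI$ and $B+cI$ are positive self-adjoint, still unbounded, still share the domain $D(A)$, and one checks straight from the definition of the product domain that $D((A+cI)^{2})=D(A^{2})$ and $D((B+cI)^{2})=D(B^{2})$ (because for $x\in D(A)$ one has $cx\in D(A)$, so $Ax+cx\in D(A)$ iff $Ax\in D(A)$). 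Thus it suffices to construct the pair without worrying about positivity.

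Next, start from any unbounded positive self-adjoint operator $A$ on $H$ (concretely, $A=\operatorname{diag}(n)_{n\ge 1}$ on $\ell^{2}$ will do). Since $A=A^{*}$ and $A$ is unbounded, Lemma \ref{ghghhgghghghghghghghghghghghghghghgh} (with $A^{*}A=A^{2}$) gives $D(A^{2})\subsetneq D(A)$, so I may pick $\xi\in D(A)\setminus D(A^{2})$ and set $v:=A\xi$. Then $v\notin D(A)$; in particular $v\neq 0$, and $\{\xi,v\}$ is linearly independent, since $A\xi=\lambda\xi$ would force $\xi\in D(A^{2})$. Let $M:=\operatorname{span}\{\xi,v\}$, a two-dimensional subspace.

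Now build a finite-rank bounded self-adjoint operator $R$ on $H$ with $R\xi=-v$ and $R=0$ on $M^{\perp}$. Working in a Gram--Schmidt orthonormal basis $e_{1}=\xi/\|\xi\|,\ e_{2}$ of $M$, the prescription $R e_{1}=-v/\|\xi\|$ is realized by a $2\times 2$ matrix whose first column is determined; the point is that this matrix can be chosen self-adjoint, and this works precisely because $\langle A\xi,\xi\rangle$ is real ($A=A^{*}$), which makes $\langle v,e_{1}\rangle$ real. Extending by $0$ on $M^{\perp}$ gives $R=R^{*}\in B(H)$ of rank $\le 2$. I expect this reality observation to be the only step needing any thought — it is the ``main obstacle'' only in that one must notice it; everything else is routine.

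Finally, put $B:=A+R$. Being a bounded self-adjoint perturbation of a self-adjoint operator, $B$ is self-adjoint with $D(B)=D(A)$. Moreover $B\xi=A\xi+R\xi=v-v=0\in D(B)$, so $\xi\in D(B^{2})$, whereas $\xi\notin D(A^{2})$; hence $D(A^{2})\neq D(B^{2})$. Applying the reduction of the first paragraph (replace $A,B$ by $A+cI,B+cI$ with $c>\|R\|$) then yields unbounded \emph{positive} self-adjoint operators with $D(A)=D(B)$ yet $D(A^{2})\neq D(B^{2})$, as desired.
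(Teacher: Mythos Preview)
Your argument is correct, and it is a genuinely different construction from the one in the paper. The paper works on $H\oplus H$: it takes any unbounded closed $A$ with $D(A)=D(A^{*})$, forms the block operator $T=\left(\begin{smallmatrix} A & I\\ 0 & 0\end{smallmatrix}\right)$, checks that $D(T)=D(T^{*})$ while $D(T^{*}T)\neq D(TT^{*})$ by exhibiting a vector in the difference, and then passes to $|T|$ and $|T^{*}|$ to get the positive self-adjoint pair. Your approach is more elementary and concrete: a single rank-two self-adjoint perturbation of a diagonal operator on $\ell^{2}$, with positivity recovered by a scalar shift. The advantage of your route is that it avoids block matrices and absolute values entirely and produces an explicit example one can write down on $\ell^{2}$; the paper's route, in keeping with its theme, is more systematic in that any unbounded closed $A$ with $D(A)=D(A^{*})$ feeds the machine, and the positivity comes for free from the polar decomposition rather than from an ad hoc shift. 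Your ``main obstacle'' (the reality of $\langle A\xi,\xi\rangle$, which is exactly what allows the $2\times 2$ matrix to be Hermitian with the prescribed first column) is correctly identified and handled.
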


\begin{proof}There could be simpler counterexamples, but here we may construct lots of them. Indeed, let $T$
be a closed and densely defined operator such that $D(T)=D(T^*)$ but
$D(TT^*)\neq D(T^*T)$.

Let $A$ be a densely defined closed and \textit{unbounded} operator
with domain $D(A)$ such that $D(A)=D(A^*)\subset H$. Define $T$ on
$H\oplus H$ by
\[T=\left(
      \begin{array}{cc}
        A & I \\
        0 & 0 \\
      \end{array}
    \right)
\]
with domain $D(T)=D(A)\oplus H$. It is plain that $T$ is closed.
\[T^*=\left[\left(
      \begin{array}{cc}
        A & 0 \\
        0 & 0 \\
      \end{array}
    \right)+\left(
      \begin{array}{cc}
        0 & I \\
        0 & 0 \\
      \end{array}
    \right)\right]^*=\left(
      \begin{array}{cc}
        A^* & 0 \\
        0 & 0 \\
      \end{array}
    \right)+\left(
      \begin{array}{cc}
        0 & 0 \\
        I & 0 \\
      \end{array}
    \right)=\left(
      \begin{array}{cc}
        A^* & 0 \\
        I & 0 \\
      \end{array}
    \right).\]

    Since $D(A)=D(A^*)$, it results that $D(T)=D(T^*)$. In addition
    \[D(TT^*)=\{(x,y)\in D(A)\times H:(A^*x,x)\in D(A)\times H\}=D(AA^*)\times H\]
    and also
    \[D(T^*T)=\{(x,y)\in D(A)\times H:(Ax+y,0)\in D(A^*)\times H\}.\]

    To see explicitly why $D(TT^*)\neq D(T^*T)$, let $\alpha$ be in
    $H$ such that $\alpha\not\in D(A^*)$. If $x_0\in D(AA^*)\subset
    D(A^*)=D(A)$, then clearly $-Ax_0\in H$. Set $y_0=-Ax_0+\alpha$. Then
    $(x_0,y_0)\in D(AA^*)\times H=D(TT^*)$. Nonetheless,  $(x_0,y_0)\not\in
    D(T^*T)$ for
    \[Ax_0+y_0=Ax_0-Ax_0+\alpha=\alpha\not\in D(A^*).\]

To finish the proof, observe that $TT^*$ and $T^*T$ are both
self-adjoint and positive. In particular, $|T|$ and $|T^*|$ are both
self-adjoint. Moreover,
\[D(|T|)=D(T)=D(T^*)=D(|T^*|).\]
However,
\[D(|T|^2)=D(T^*T)\neq D(TT^*)=D(|T^*|^2),\]
as needed.
\end{proof}

\begin{rema}
We are aware now that $D(A)=D(B)$ does not entail $D(A^2)=D(B^2)$
even when $A$ and $B$ are self-adjoint. It is worth noting that the
condition $D(A)=D(B)$ does not even have to imply that $D(A^2-B^2)$
(or $D(AB+BA)$) is dense (cf. Theorem \ref{main THMMMMMMMMMMMMMMMM
SQ RT}). Before giving a counterexample, we give a simple lemma:
\end{rema}

\begin{lem}\label{18/07/2020}
Let $A$ and $B$ two linear operators such that $D(A)=D(B)$. Then
\[D(AB+BA)=D(A^2-B^2)\subset D[(A-B)^2]\text{ or }D[(A+B)^2].\]
\end{lem}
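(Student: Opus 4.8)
The plan is to unravel every domain appearing in the statement directly from the definition of the natural domain of a product and of a sum, using the hypothesis $D(A)=D(B)$ throughout. Write $\mathcal{D}=D(A)=D(B)$. First I would record that
\[D(AB)=\{x\in\mathcal{D}:Bx\in\mathcal{D}\},\qquad D(BA)=\{x\in\mathcal{D}:Ax\in\mathcal{D}\},\]
and, applying the same definition to $A^2=AA$ and $B^2=BB$,
\[D(A^2)=\{x\in\mathcal{D}:Ax\in\mathcal{D}\}=D(BA),\qquad D(B^2)=\{x\in\mathcal{D}:Bx\in\mathcal{D}\}=D(AB).\]

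From here the asserted equality is immediate: by the definition of the domain of a sum,
\[D(AB+BA)=D(AB)\cap D(BA)=D(B^2)\cap D(A^2)=D(A^2-B^2),\]
which is the first half of the statement.

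For the inclusion, observe that $D(A\pm B)=D(A)\cap D(B)=\mathcal{D}$, so that
\[D[(A\pm B)^2]=\{x\in\mathcal{D}:(A\pm B)x\in\mathcal{D}\}=\{x\in\mathcal{D}:Ax\pm Bx\in\mathcal{D}\}.\]
Now take $x$ in the common domain $D(AB+BA)=D(A^2-B^2)=D(AB)\cap D(BA)$. Then $x\in\mathcal{D}$, $Bx\in\mathcal{D}$ and $Ax\in\mathcal{D}$; since $\mathcal{D}$ is a linear subspace, both $Ax+Bx$ and $Ax-Bx$ lie in $\mathcal{D}$, whence $x\in D[(A+B)^2]$ and $x\in D[(A-B)^2]$. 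Thus $D(AB+BA)=D(A^2-B^2)$ is contained in either (in fact in both) of $D[(A-B)^2]$ and $D[(A+B)^2]$, as claimed.

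The argument is purely a bookkeeping of natural domains, so I do not expect any genuine obstacle; the only points worth emphasising are that the hypothesis $D(A)=D(B)$ is exactly what forces $D(A^2)=D(BA)$ and $D(B^2)=D(AB)$, so that the common domain is well behaved, and that the inclusion can be proper, since one may well have $x\in\mathcal{D}$ with $Ax\notin\mathcal{D}$ and $Bx\notin\mathcal{D}$ while still $Ax-Bx\in\mathcal{D}$ (and similarly for $Ax+Bx$).
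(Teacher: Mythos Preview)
Your proof is correct and follows essentially the same approach as the paper's: both arguments hinge on the observation that $D(A)=D(B)$ forces $D(A^2)=D(BA)$ and $D(B^2)=D(AB)$, from which the equality $D(AB+BA)=D(A^2-B^2)$ is immediate. For the inclusion, the paper packages the same reasoning via the algebraic inclusion $A^2-B^2+AB-BA\subset (A+B)(A-B)$ together with $D[(A+B)(A-B)]=D[(A-B)^2]$, whereas you verify it directly element by element; this is only a cosmetic difference, and your version has the small advantage of handling both $D[(A-B)^2]$ and $D[(A+B)^2]$ in a single stroke.
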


\begin{proof}Write
\[A^2-B^2+AB-BA\subset (A+B)(A-B).\]
Since $D(A)=D(B)$, it follows that $D(A^2)=D(BA)$ and that
$D(B^2)=D(AB)$. Hence
\[D(AB+BA)=D(A^2-B^2)\subset D[(A+B)(A-B)].\]
But
\[D[(A+B)(A-B)]=D[(A-B)^2]\]
for $D(A+B)=D(A-B)$. The other inclusion can be shown analogously.
\end{proof}

Now, we give the promised counterexample.

\begin{cor}
There are self-adjoint positive unbounded operators $A$ and $B$ such
that $D(A)=D(B)$ yet neither $A^2-B^2$ nor $AB+BA$ is densely
defined.
\end{cor}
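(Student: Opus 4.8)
The plan is to reduce the statement, via Lemma \ref{18/07/2020}, to the single task of producing two \emph{unbounded self-adjoint positive} operators $A$ and $B$ with $D(A)=D(B)$ and with $D(A^{2})\cap D(B^{2})$ \emph{not} dense. This suffices: the natural domain of $A^{2}-B^{2}$ is precisely $D(A^{2})\cap D(B^{2})$, so a non-dense intersection immediately makes $A^{2}-B^{2}$ not densely defined; and since $D(A)=D(B)$, Lemma \ref{18/07/2020} yields $D(AB+BA)=D(A^{2}-B^{2})$, whence $AB+BA$ is not densely defined either. So everything comes down to exhibiting such a pair.

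For the construction I would fall back on the classical fact (see e.g.\ \cite{KOS}) that there exist two unbounded self-adjoint operators $P_{0},Q_{0}$ on a Hilbert space $H$ with $D(P_{0})\cap D(Q_{0})=\{0\}$. Replacing them by $|P_{0}|+I$ and $|Q_{0}|+I$ if necessary — which leaves the domains unchanged — I may and do assume that $P$ and $Q$ are unbounded self-adjoint \emph{positive} (in fact $\geq I$) operators on $H$ with
\[D(P)\cap D(Q)=\{0\}.\]
Now, on $H\oplus H$, set $A=P\oplus Q$ and $B=Q\oplus P$. Both are self-adjoint (recall $(P\oplus Q)^{*}=P^{*}\oplus Q^{*}$), both are positive, both are unbounded (since $P$ is), and clearly
\[D(A)=D(P)\oplus D(Q)=D(B).\]

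It then remains only to observe that the transversality of the domains is inherited — indeed amplified — at the level of the squares. Since $A^{2}=P^{2}\oplus Q^{2}$ and $B^{2}=Q^{2}\oplus P^{2}$ (legitimate products, the matrices being block-diagonal), one has
\[D(A^{2})\cap D(B^{2})=\bigl(D(P^{2})\cap D(Q^{2})\bigr)\oplus\bigl(D(Q^{2})\cap D(P^{2})\bigr),\]
and because $D(P^{2})\subset D(P)$ and $D(Q^{2})\subset D(Q)$ we get $D(P^{2})\cap D(Q^{2})\subset D(P)\cap D(Q)=\{0\}$. Hence $D(A^{2})\cap D(B^{2})=\{(0,0)\}$, which is certainly not dense in $H\oplus H$, and the first paragraph completes the argument. (For $AB+BA$ one may just as well argue directly, $D(AB+BA)=D(AB)\cap D(BA)\subset\bigl(D(Q)\cap D(P)\bigr)\oplus\bigl(D(P)\cap D(Q)\bigr)=\{0\}$.) I do not expect a serious obstacle here: the only thing to get right is the configuration itself, and in particular to resist the reflex — encouraged by the more delicate Proposition \ref{19/07/2020} — of believing that $D(A)=D(B)$ should at least force $D(A^{2})\cap D(B^{2})$ to be dense. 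The block-swap $A=P\oplus Q$, $B=Q\oplus P$ is exactly what evades this: the first powers stay \emph{parallel}, while the squares see the full transversality of $D(P)$ and $D(Q)$.
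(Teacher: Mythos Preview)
Your reduction in the first paragraph is fine, but the construction collapses at the line ``$D(A)=D(P)\oplus D(Q)=D(B)$''. The direct sum $P\oplus Q$ is the block-diagonal operator $\left(\begin{smallmatrix}P&0\\0&Q\end{smallmatrix}\right)$, so its domain is the \emph{ordered} product $D(P)\times D(Q)\subset H\oplus H$; likewise $D(B)=D(Q\oplus P)=D(Q)\times D(P)$. These two subspaces coincide only when $D(P)=D(Q)$. With your choice $D(P)\cap D(Q)=\{0\}$ you get instead
\[
D(A)\cap D(B)=\bigl(D(P)\cap D(Q)\bigr)\times\bigl(D(Q)\cap D(P)\bigr)=\{(0,0)\},
\]
so far from having equal domains, $A$ and $B$ have no common nonzero vector. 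The ``block-swap'' idea thus destroys the very hypothesis $D(A)=D(B)$ you need; the transversality you exploit at the level of squares is already present at the first power.

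The paper's construction avoids this by working on a single copy of $H$: it takes a closed positive symmetric $T$ with $D(T^{2})=\{0\}$ (Chernoff's example) and sets $B=|T|$, $A=\tfrac{1}{2}T+|T|$. Then $D(A)=D(|T|)=D(B)$ genuinely holds, $A$ is self-adjoint by Kato--Rellich (the perturbation $\tfrac{1}{2}T$ has $|T|$-bound $\tfrac{1}{2}$), and $A-B=\tfrac{1}{2}T$ has $D((A-B)^{2})=D(T^{2})=\{0\}$, whence Lemma~\ref{18/07/2020} gives $D(A^{2}-B^{2})\subset\{0\}$. The point is that the pathology must live in the \emph{square} of a difference while the operators themselves share a domain; a perturbative setup does this naturally, whereas a direct-sum swap cannot.
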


\begin{proof}First, observe that $D(A)=D(B)$ yields
$D(AB+BA)=D(A^2-B^2)$. So, it suffices to exhibit $A$ and $B$ with
the claimed properties such that $D(A^2-B^2)$ is not dense.

Consider a closed densely defined positive symmetric operator $T$
such that $D(T^2)=\{0\}$ (as in e.g. \cite{CH}), then set
$A=T/2+|T|$ and $B=|T|$. That $A$ and $B$ are positive is plain.
Also, $D(A)=D(B)$ and $B$ is self-adjoint. As for the
self-adjointness of $A$ one needs to call on the Kato-Rellich
theorem (see e.g. \cite{Weidmann}).

By Lemma \ref{18/07/2020}, if $A^2-B^2$ were densely defined, so
would be $D[(A-B)^2]$. However,
\[D[(A-B)^2]=D(T^2)=\{0\},\]
and so $A^2-B^2$ is not densely defined.
\end{proof}

Let us pass now to unclosable square (or other types of) roots of
the identity operator $I:H\to H$.

\begin{pro}\label{T^2=I UNCLOSABLE PRO}
There exists an everywhere defined non-closable unbounded operator
$T$ such that
\[T^2=I.\]
\end{pro}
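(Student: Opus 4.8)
The plan is to produce $T$ explicitly as a bounded perturbation of an everywhere defined non-closable idempotent, reusing the mechanism of Example \ref{stochel's example}. First I would take a discontinuous linear functional $f\colon H\to\C$, but this time choose $e\in H$ with $f(e)=1$ (such $e$ exists: pick any $e_0$ with $f(e_0)\neq 0$ and rescale). Define $P\colon H\to H$ by $Px=f(x)e$. Then $D(P)=H$; $P$ is unbounded since $f$ is unbounded and $e\neq 0$; $P$ is not closable, for if it were, its closure would be closed and everywhere defined, hence bounded by the Closed Graph Theorem, forcing $P$ itself to be bounded; and $P^2x=f(x)f(e)e=f(x)e=Px$, so $P$ is idempotent.

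Next I would set $T=2P-I$, which is everywhere defined on $H$. A one-line computation of genuine equalities of everywhere defined operators gives
\[T^2=(2P-I)^2=4P^2-4P+I=4P-4P+I=I\]
on all of $H$, so $T$ is a square root of $I$. It is unbounded because $T=2P-I$ with $P$ unbounded and $I\in B(H)$, and it is non-closable because $T+I=2P$ would be closable whenever $T$ is, contradicting the non-closability of $P$.

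There is essentially no obstacle; the only point needing care is that every step in $(2P-I)^2=4P^2-4P+I=I$ is a full equality (no domain shrinks under composition or addition), which holds precisely because $P$, hence $T$, has domain all of $H$. If one prefers an operator-matrix incarnation in the style used above, the same idea works on $H\oplus H$: taking $A$ as in Example \ref{stochel's example} (so $D(A)=H$, $A$ non-closable, $A^2=0$ on $H$), the operator
\[T=\left(\begin{array}{cc} A & I \\ I & -A \end{array}\right)\]
is everywhere defined and unbounded, satisfies $T^2=\left(\begin{array}{cc} A^2+I & 0 \\ 0 & I+A^2 \end{array}\right)=I$ on $H\oplus H$, and is non-closable because $T^*=\left(\begin{array}{cc} A^* & I \\ I & -A^* \end{array}\right)$ has domain $D(A^*)\oplus D(A^*)$, which is not dense.
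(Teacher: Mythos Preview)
Your proposal is correct and in fact contains two valid proofs. The second one --- the $2\times 2$ block operator
\[
T=\left(\begin{array}{cc} A & I \\ I & -A \end{array}\right)
\]
built from an everywhere defined non-closable $A$ with $A^2=0$ --- is exactly the construction the paper uses; your justification of non-closability via $T^*$ (writing $T$ as a bounded perturbation of the diagonal block $A\oplus(-A)$ so that $D(T^*)=D(A^*)\oplus D(A^*)$) is a welcome detail that the paper leaves implicit.

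Your first argument, however, is genuinely different from the paper's and arguably more economical. By choosing $e$ with $f(e)=1$ you turn the rank-one map $Px=f(x)e$ into a non-closable \emph{idempotent} on $H$ itself, and then $T=2P-I$ is an involution on $H$ by the elementary identity $(2P-I)^2=4P^2-4P+I=I$; non-closability and unboundedness of $T$ follow at once since $T+I=2P$. This stays on the original space $H$ and avoids block matrices entirely. What the paper's matrix route buys is uniformity with the later constructions (e.g.\ the $n\times n$ block pattern used to produce non-closable $n$th roots of the identity in Theorem~\ref{26/06/2020}), where a single-space trick like $2P-I$ has no obvious analogue.
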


\begin{proof}
Let $A$ be a non-closable unbounded operator defined on all of $H$
such that $A^2=0$ everywhere. Then, set
\[T=\left(
      \begin{array}{cc}
        A & I \\
        I & -A \\
      \end{array}
    \right),
\]
which is defined fully on $H\oplus H$. Then $T$ is unclosable and
besides $D(T^2)=H\oplus H$. Since $A-A=0$ and $A^2=0$ both
everywhere on $H$, we may write
\[T^2=\left(
      \begin{array}{cc}
        A & I \\
        I & -A \\
      \end{array}
    \right)\left(
      \begin{array}{cc}
        A & I \\
        I & -A \\
      \end{array}
    \right)=\left(
      \begin{array}{cc}
        A^2+I & A-A \\
        A-A & A^2+I \\
      \end{array}
    \right)=\left(
      \begin{array}{cc}
        I & 0 \\
        0 & I \\
      \end{array}
    \right),\]
i.e. $T^2=I_{H\oplus H}$, as needed.
\end{proof}

\begin{rema}
The equation $T^2=I$ says that $T:H\to H$ is a bijective or
invertible (not boundedly though) non-closable operator which is
everywhere defined.
\end{rema}

\begin{cor}
There are two everywhere defined unbounded non-closable operators
$A$ and $B$ such that $AB=BA=I$ everywhere on some Hilbert space
$K$, that is,
\[ABx=BAx=x,~\forall x\in K.\]
\end{cor}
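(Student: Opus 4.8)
The plan is to simply recycle the operator produced in Proposition \ref{T^2=I UNCLOSABLE PRO}. Take $K=H\oplus H$ and let $T$ be the everywhere defined, unbounded, non-closable operator on $K$ constructed there, so that $T^2=I_{H\oplus H}$ (full equality). Since $T$ is everywhere defined, all products of $T$ with itself are legitimate, and the identity $T^2=I$ says precisely that $T$ is its own two-sided inverse. Hence already $A=B=T$ gives $AB=BA=I$ on all of $K$, with both $A$ and $B$ everywhere defined, unbounded and non-closable, which proves the corollary.

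If one wishes to exhibit genuinely \emph{distinct} operators $A\neq B$, the plan is to fix a scalar $\alpha\in\R$ with $\alpha\neq 0$ and $\alpha\neq\pm 1$, and set $A=\alpha T$ and $B=\alpha^{-1}T$. Then $A$ and $B$ are everywhere defined on $K$; they are unbounded because $T$ is and $\alpha,\alpha^{-1}\neq 0$; and they are non-closable since a nonzero scalar multiple of a non-closable operator is again non-closable (if $\alpha T\subset C$ for some closed $C$, then $T=\alpha^{-1}(\alpha T)\subset\alpha^{-1}C$ would be closable too). Because every operator in sight is everywhere defined, the products are well defined, and
\[AB=(\alpha T)(\alpha^{-1}T)=T^{2}=I_{K}=(\alpha^{-1}T)(\alpha T)=BA.\]

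There is essentially no obstacle here; the corollary is a one-line consequence of Proposition \ref{T^2=I UNCLOSABLE PRO}. The only routine points to record are that a nonzero scalar multiple of a non-closable operator stays non-closable and that all products are legitimate once the operators are everywhere defined (as recalled in the discussion of matrices of operators). The substantive construction was carried out in Proposition \ref{T^2=I UNCLOSABLE PRO}.
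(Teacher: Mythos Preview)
Your proof is correct. Both you and the paper start from the operator $T$ of Proposition \ref{T^2=I UNCLOSABLE PRO}, but the constructions diverge from there. The paper builds $2\times 2$ block matrices out of $T$ on the larger space $H^4$ (and, as a second example, takes $A=\left(\begin{smallmatrix}I&T\\0&I\end{smallmatrix}\right)$, $B=\left(\begin{smallmatrix}I&-T\\0&I\end{smallmatrix}\right)$ for an arbitrary non-closable everywhere defined $T$), thereby producing structurally different $A$ and $B$. You instead observe that $T$ is already its own inverse, so $A=B=T$ works immediately, and then rescale to $A=\alpha T$, $B=\alpha^{-1}T$ if distinct operators are desired. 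Your route is more economical and stays on $K=H\oplus H$; the paper's second construction has the slight advantage that it needs only a non-closable everywhere defined operator, not one squaring to the identity.
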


\begin{proof}From Proposition \ref{T^2=I UNCLOSABLE PRO}, we have a non-closable operator $T$ such that $T^2=I$ everywhere on
$H\oplus H$. Setting
\[A=\left(
      \begin{array}{cc}
        0 & T \\
        I & 0 \\
      \end{array}
    \right)\text{ and }B=\left(
      \begin{array}{cc}
        0 & I \\
        T & 0 \\
      \end{array}
    \right),
\]
which are everywhere defined on $H\oplus H\oplus H\oplus H$, we see
that
\[AB=\left(
      \begin{array}{cc}
        T^2 & 0 \\
        0 & I \\
      \end{array}
    \right)=\left(
      \begin{array}{cc}
        I & 0 \\
        0 & I \\
      \end{array}
    \right)=\left(
      \begin{array}{cc}
        I & 0 \\
        0 & T^2 \\
      \end{array}
    \right)=BA\]
everywhere.

Let us give a second example. Let $T$ be any unbounded non-closable
 everywhere defined operator on $H$ and let
\[A=\left(
      \begin{array}{cc}
        I & T \\
        0 & I \\
      \end{array}
    \right).\]

It is seen that $A$, which is defined on all of $H\oplus H$, is
bijective and so it is invertible (not boundedly) with an inverse
given by $B=\left(
      \begin{array}{cc}
        I & -T \\
        0 & I \\
      \end{array}
    \right)$ for
\[AB=BA=\left(
      \begin{array}{cc}
        I & 0 \\
        0 & I \\
      \end{array}
    \right).\]
\end{proof}

\begin{rema}Let $A,B,T$ be all everywhere defined and not closable. Note by $I$
the identity operator which need not act on the same space in each
case. The existence of a $T$ such that $T^2=I$ gave rise to two
different operators $A$ and $B$ such that $AB=BA=I$.

Conversely the availability of a pair of two different operators $A$
and $B$ such that $AB=BA=I$ in turn leads to $T^2=I$. This is easily
seen by taking
\[T=\left(
      \begin{array}{cc}
        0 & A \\
        B & 0 \\
      \end{array}
    \right)
\]
which is defined on $D(T):=D(B)\oplus D(A)=H\oplus H$. Hence
\[T^2=\left(
      \begin{array}{cc}
        AB & 0 \\
        0 & BA \\
      \end{array}
    \right)=\left(
      \begin{array}{cc}
        I & 0 \\
        0 & I \\
      \end{array}
    \right),\]
    as desired.
\end{rema}

\begin{rema}
If we have a non-closable operator $T$ such that $D(T)=H$ and
$T^2=I$, then we can always manufacture a non-closable $S$ such that
$S^2=0$. Just consider
\[S=\left(
      \begin{array}{cc}
        I & T \\
        -T & -I \\
      \end{array}
    \right)
\]
on $D(S)=H\oplus H$. Then
\[S^2=\left(
      \begin{array}{cc}
        I-T^2 & T-T \\
        -T+T & -T^2+I \\
      \end{array}
    \right)=\left(
      \begin{array}{cc}
        0 & 0 \\
        0 & 0 \\
      \end{array}
    \right)\]
everywhere on $H\oplus H$ as all the resulting operations are
carried out on all of $H$.
\end{rema}

As alluded above, boundedly invertible operators are necessarily
closed while invertible operators might even be unclosable in some
cases (as when $T^2=I$). What about the weaker notion of left or
right invertibility?

\begin{thm}\label{left-right invert not closed THM}
There is a left (resp. right) invertible operator which is not
closed.
\end{thm}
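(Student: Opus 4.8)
The plan is to exhibit concrete matrix-of-operators examples, exactly in the spirit of the constructions used throughout the paper. The key observation is that left (resp. right) invertibility, as defined in the excerpt, only requires a bounded everywhere-defined $C$ with $CA\subset I$ (resp. $B\in B(H)$ with $AB=I$), and such relations can persist when $A$ is merely a bounded operator restricted to a non-closed dense domain, which is the cheapest source of non-closed operators at our disposal.

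First, for the right-invertible case, I would take a dense non-closed subspace $D\subsetneq H$ and consider $A=I_D$, the identity operator restricted to $D$. This is certainly not closed, since a bounded operator is closed iff its domain is closed (as recalled in the introduction), and $D$ is not closed. Yet for any $x\in H$ we have $x=Ix$, so... wait: $AB=I$ requires $\ran B\subset D(A)=D$, which fails for $B=I$. So instead I would set $A$ on $H\oplus H$ by $A=\left(\begin{array}{cc} 0 & I \\ 0 & I_D \end{array}\right)$ with $D(A)=H\oplus D$; this is not closed (its second diagonal block is $I_D$), it is everywhere... no, not everywhere defined. Let me instead use the truly clean route: take $A=\left(\begin{array}{cc} 0 & I \\ 0 & 0 \end{array}\right)$ composed with something non-closed — actually the simplest is $A=\left(\begin{array}{cc} I & C \\ 0 & 0 \end{array}\right)$ on $H\oplus D(C)$ where $C$ is unbounded closed; then $B=\left(\begin{array}{cc} I \\ 0 \end{array}\right)$ gives $AB=I$ on $H$, $A$ is closed though — so to make it non-closed I replace the closed $C$ by a non-closed densely defined bounded one, i.e. by the restriction of a bounded operator (say $0$) to a non-closed dense domain $D$: put $A=\left(\begin{array}{cc} I & 0 \\ 0 & 0_D \end{array}\right)$ on $H\oplus D$. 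Then $A$ is not closed (the block $0_D$ has non-closed domain), while $B=\left(\begin{array}{cc} I & 0 \\ 0 & 0 \end{array}\right)\in B(H\oplus H)$ satisfies $AB=\left(\begin{array}{cc} I & 0 \\ 0 & 0_D \end{array}\right)\left(\begin{array}{cc} I & 0 \\ 0 & 0 \end{array}\right)=I_{H\oplus H}$ since the second component of $Bx$ is $0\in D$; hence $A$ is right invertible but not closed.

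For the left-invertible case I would dualize: take $A=\left(\begin{array}{cc} I & 0 \\ 0 & 0 \end{array}\right)$ but now acting on a non-closed dense domain that still contains everything relevant — here I would instead take $A$ itself to be a genuinely injective non-closed operator with bounded inverse-on-its-range extending boundedly. Concretely, let $D\subsetneq H$ be dense and not closed and let $A=I_D:D\to H$, the restricted identity; it is injective, not closed, and $C=I\in B(H)$ satisfies $CA=I_D\subset I$, so $A$ is left invertible but not closed. (If one prefers a genuinely unbounded example rather than a restricted bounded one, embed this in a $2\times 2$ matrix with an unbounded closed operator in the other slot.) I would then remark, as in the surrounding text, that this shows left/right invertibility — unlike bounded invertibility — does not force closedness, paralleling the earlier phenomenon that invertible operators need not be closable.

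The main obstacle is purely one of packaging: one must double-check that in each matrix product the intermediate vectors land in the correct (non-closed) domains so that the products $AB$ resp. $CA$ are literally defined and equal to the identity on the full space resp. contained in it — the natural-domain conventions for products of operator matrices recalled in the introduction make this a matter of careful bookkeeping rather than real difficulty, and the non-closedness in every case reduces to the cited fact that a bounded operator is closed exactly when its domain is closed.
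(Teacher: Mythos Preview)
Your left-invertible example is correct and is exactly the paper's: $A=I_D$ with $C=I\in B(H)$ gives $CA=I_D\subset I$, and $I_D$ is bounded on a non-closed domain, hence not closed.

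Your right-invertible example, however, is wrong. With
\[
A=\left(\begin{array}{cc} I & 0 \\ 0 & 0_D \end{array}\right)\quad\text{and}\quad B=\left(\begin{array}{cc} I & 0 \\ 0 & 0 \end{array}\right),
\]
you compute $AB$ on $(x,y)\in H\oplus H$ as follows: $B(x,y)=(x,0)$, which does lie in $D(A)=H\oplus D$ since $0\in D$, and then $A(x,0)=(x,0)$. Thus $AB$ is the orthogonal projection onto $H\oplus\{0\}$, not $I_{H\oplus H}$. The claim ``$AB=I_{H\oplus H}$'' is simply false, and no tinkering with the block $0_D$ will repair it: any $A$ whose second row is zero can never have $AB$ equal to the identity.

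The underlying difficulty you are bumping into is real: if $A$ is \emph{bounded} on a dense proper domain $D(A)$ and $AB=I$ for some $B\in B(H)$, then $\ran B\subset D(A)$ and $A$ is surjective, which forces constraints that are awkward to arrange by restriction tricks. The paper sidesteps this by going in the opposite direction: it builds an \emph{everywhere defined unbounded} (hence non-closable, hence non-closed) operator $A$. Start with $B\in B(H)$ having dense but non-closed range, choose an algebraic complement $E$ of $\ran B$ in $H$, define $A$ on $\ran B$ by $ABx=x$ and arbitrarily (linearly) on $E$, and extend by linearity to all of $H$. Then $AB=I$ by construction, while $A$ is unbounded on $\ran B$ (else $\ran B$ would be closed), so $A$ is everywhere defined, unbounded, and therefore not closable.
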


\begin{proof}
The simplest example in the left invertibility case is to restrict
the identity operator on $H$ (noted $I_H$) to some non-closed domain
$D\subset H$ and denote this restriction by $I_D$. Then $I_D$ is
left invertible for
\[I_HI_D=I_D\subset I_H.\]

Since $I_D$ is bounded on a non closed domain, it follows that $I_D$
is unclosed.

As for the right invertibility case, there is an example of such an
$A$ which is even everywhere defined in $H$ (there might not be any
more explicit one).  Start with $B$ in $B(H)$ such that its range
$\ran(B)$ is dense but is not all of $H$. Let $E$ be a linear
subspace of $H$ which is complementary to $\ran(B)$ in the algebraic
sense (i.e. $\ran(B)+E=H$, without taking closure, while the
intersection is $\{0\}$). Then define $A$ on $\ran(B)$ by
\[ABx=x,\]
and define $A$ on $E$ to be an arbitrary linear mapping of $E$ to
$H$. $A$ then extends by linearity to all of $H$, and $AB=I$, but
$A$ is not bounded (as it is not bounded on $\ran(B)$ as if it were,
then $\ran(B)$ would be closed), so it cannot be closable.
\end{proof}

We have given a way of finding everywhere defined bijective
operators. A similar ideas applies to injectivity and surjectivity
independently.

\begin{pro}
There is an everywhere defined unbounded operator which is injective
but not surjective, and there is an everywhere defined unbounded
operator which is surjective but not injective.
\end{pro}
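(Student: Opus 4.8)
The plan is to realize both operators as $2\times 2$ matrices of everywhere defined operators, in the same spirit as the block-unipotent matrices used above to produce everywhere defined bijective (unbounded) operators. Fix an infinite-dimensional Hilbert space $H$, let $T\colon H\to H$ be an everywhere defined unbounded operator (such operators exist, e.g. $Tx=f(x)x_{0}$ for a discontinuous linear functional $f$ and a fixed $x_{0}\neq 0$, as recalled in the Introduction; recall such a $T$ is automatically non-closable), and let $S\in B(H)$ be an isometry which is not surjective — for instance a unilateral shift. Thus $S$ is bounded and one-to-one with $\ran(S)\subsetneq H$, while $S^{*}\in B(H)$ is onto with $\ker(S^{*})\neq\{0\}$. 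Since all the operators involved are everywhere defined, all the matrix products below are legitimate.

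For the injective-but-not-surjective statement, I would take
\[
A=\begin{pmatrix} S & T\\ 0 & S\end{pmatrix}\quad\text{on } D(A)=H\oplus H .
\]
It is everywhere defined. It is unbounded, because $\|A(0,y)\|\geq\|Ty\|$ for every $y\in H$ while $\sup_{\|y\|\leq 1}\|Ty\|=\infty$. It is one-to-one: if $A(x,y)=(Sx+Ty,\,Sy)=(0,0)$, then $Sy=0$ gives $y=0$ (as $S$ is injective), whence $Sx=0$ and so $x=0$. Finally it fails to be onto, since $\ran(A)\subseteq H\oplus\ran(S)$ and $\ran(S)\neq H$.

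For the surjective-but-not-injective statement, I would use the "dual" matrix
\[
A=\begin{pmatrix} S^{*} & T\\ 0 & S^{*}\end{pmatrix}\quad\text{on } D(A)=H\oplus H ,
\]
which is again everywhere defined and unbounded by the same estimate $\|A(0,y)\|\geq\|Ty\|$. It is onto: given $(u,v)\in H\oplus H$, first pick $y$ with $S^{*}y=v$ (possible since $S^{*}$ is surjective), then pick $x$ with $S^{*}x=u-Ty$; then $A(x,y)=(u,v)$. It is not one-to-one: choosing $0\neq y_{0}\in\ker(S^{*})$ and then $x_{0}$ with $S^{*}x_{0}=-Ty_{0}$, we get $A(x_{0},y_{0})=(0,0)$ with $(x_{0},y_{0})\neq(0,0)$.

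No step here is a genuine obstacle; the only things to watch are that the bounded outer entries $S$ (resp. $S^{*}$) neither kill unboundedness — which is why one probes with vectors of the form $(0,y)$, so that $Ty$ survives in a coordinate — nor disturb the injectivity/surjectivity bookkeeping, and that, as noted in the Introduction, products of everywhere defined operators cause no domain trouble, so the above matrix computations are valid.
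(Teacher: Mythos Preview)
Your proof is correct, but it takes a different route from the paper. The paper first invokes the previously constructed everywhere defined unbounded operator $T$ with $T^{2}=I$ (hence $T$ is bijective), and then simply forms the direct sums
\[
A=T\oplus S \quad\text{and}\quad B=T\oplus R
\]
with $S\in B(H)$ injective and not surjective, $R\in B(H)$ surjective and not injective. Unboundedness is carried by the $T$ summand, while injectivity/surjectivity (or their failure) is read off componentwise.

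Your construction is more self-contained: you only need \emph{some} everywhere defined unbounded $T$, with no constraint whatsoever on $T$ (in particular no need for the special $T^{2}=I$ built earlier), and you place it in the off-diagonal slot of an upper-triangular block so that it contributes unboundedness without interfering with the injectivity/surjectivity, which is governed entirely by the diagonal isometry $S$ or its adjoint $S^{*}$. The paper's argument is a bit cleaner once the bijective $T$ is in hand, but yours avoids that dependency and works from a strictly weaker ingredient.
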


\begin{proof}Let $T$ be an everywhere defined operator such that
$T^2=I$. Hence $T$ is bijective.
\begin{enumerate}
  \item Let
$S\in B(H)$ be any injective operator which is not surjective. Set
\[A:=T\oplus S=\left(
      \begin{array}{cc}
        T & 0 \\
        0 & S \\
      \end{array}
    \right),
\]
and so $D(A)=H\oplus H$. Then $A$ is unbounded and not closable.
That $A$ is injective is plain. As $\ran S\neq H$, it results that
\[\ran A=H\oplus \ran S\neq H\oplus H,\]
that is, $A$ is not surjective.
  \item Consider a surjective $R\in B(H)$ which is not injective. Then
\[B:=\left(
      \begin{array}{cc}
        T & 0 \\
        0 & R    \\
      \end{array}
    \right)
\]
is unbounded, $D(B)=H\oplus H$, $\ran B=H\oplus H$ and
\[\ker B\neq \{(0,0)\},\]
as needed.
\end{enumerate}
\end{proof}

Now, we deal with the general case. First, we provide a finite
dimensional example:

\begin{exa}\label{israel example}
Let $n\in \N$ be given. There is an $n\times n$ matrix such that
$A^n=I$ with $A^{n-1}\neq I$ (in fact, $A^p\neq I$ for
$p=1,2,\cdots, n-1$).

There are many types of counterexamples. The simplest one is to take
the following circulant permutation $n\times n$ matrix

\[A=\left(
    \begin{array}{cccccc}
      0 & 1 & 0 & \cdots & \cdots & 0 \\
      0 & 0& 1 & 0 &  &  \vdots\\
      \vdots &  & 0 & 1 & \ddots & \vdots \\
      \vdots &  &  & \ddots & \ddots & 0 \\
      0 &  &  &  & 0 & 1 \\
      1 &0 & \cdots & \cdots & 0 & 0 \\
    \end{array}
  \right)
\]
where the corresponding permutation being $p(i)=i+1$. Then it is
well known that $A^n=I$ and $A^p\neq I$ for $p=1,2,\cdots, n-1$.

In order to carry over this type of examples to matrices of
unbounded operators, we need to place some parameter inside the
previous matrix, and still obtain the same conclusions. So, a more
general form of the previous example reads:
\[A=\left(
    \begin{array}{cccccc}
      0 & 1 & \alpha & \cdots & \cdots & 0 \\
      0 & 0& 1 & 0 &  &  \vdots\\
      \vdots &  & 0 & 1 & \ddots & \vdots \\
      \vdots &  &  & \ddots & \ddots & 0 \\
      0 &  &  &  & 0 & 1 \\
      1 &-\alpha & \cdots & \cdots & 0 & 0 \\
    \end{array}
  \right),
\]
where it can be again checked that $A^n=I$ and that $A^p\neq I$ for
$p=1,2,\cdots, n-1$ (all that holding for any $\alpha$).
\end{exa}

\begin{thm}\label{26/06/2020}
Let $n\in\N$ be given. There are infinitely many everywhere defined
non-closable unbounded operators $T$ such that $T^n=I$
\textit{everywhere} on some Hilbert space while $T^p\neq I$ for
$p=1,2,\cdots, n-1$.
\end{thm}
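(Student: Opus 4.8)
The plan is to transfer the finite-dimensional Example \ref{israel example} to matrices of operators, exploiting that the parametrized circulant there satisfies $A^n=I$ and $A^p\neq I$ \emph{for every value of the scalar $\alpha$}. Fix a Hilbert space $H$ and an everywhere defined, unbounded, non-closable operator $N$ on $H$ with $N^2=0$ everywhere on $H$; such $N$ exist by Examples \ref{stochel's example} and \ref{unclosable A A2=0 matrices of oper EXA}. Since $D(N)=H$ we automatically get $D(N^k)=H$ for every $k$, so every product of operators that will occur below is everywhere defined (as the paper already remarks for everywhere defined operators). The case $n=1$ is degenerate and $n=2$ is already handled by Proposition \ref{T^2=I UNCLOSABLE PRO}, so I would assume $n\geq 3$.

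First I would build the $n\times n$ operator matrix $T$ on $H\oplus H\oplus\cdots\oplus H$ ($n$ copies) by performing, entrywise, the substitution $0\mapsto 0$, $1\mapsto I_H$, $\alpha\mapsto N$, $-\alpha\mapsto -N$ in the parametrized circulant $A$ of Example \ref{israel example}; thus the only nonscalar entries are $N$ (in the slot of $\alpha$) and $-N$ (in the slot of $-\alpha$). All entries being everywhere defined, $T$ is everywhere defined on $H\oplus\cdots\oplus H$, and it is obviously unbounded. It is also non-closable: taking $y_k\to 0$ in $H$ with $Ny_k\to z\neq 0$ and feeding in $x_k=(0,0,y_k,0,\dots,0)$, one has $x_k\to 0$ while $Tx_k\to (z,0,\dots,0)\neq 0$.

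Next I would show $T^n=I$. Each entry of $A^n$ is a polynomial in $\alpha$ with integer coefficients, and the identity $A^n=I$, valid for all real $\alpha$, says exactly that each such polynomial equals the corresponding (constant) entry of the identity matrix. Because $N$ commutes with every scalar and with $I_H$, and because $A$ contains only the single ``variable'' $\alpha$, the substitution $\alpha\mapsto N$ induces a homomorphism on the matrices in play; it therefore carries $A^n=I$ to $T^n=I_{H\oplus\cdots\oplus H}$, everywhere on $H\oplus\cdots\oplus H$. This transfer step is the one point that needs genuine care: one must be sure the substitution is legitimate at the level of (possibly unbounded) operators, which is precisely why working with an everywhere defined $N$ (so that all products are total) and with a single parameter is essential; the property $N^2=0$ is not even used here.

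Finally, $T^p\neq I$ for $1\leq p\leq n-1$. Setting $\alpha=0$ turns $A$ into the plain circulant permutation matrix $P$, for which $P^p\neq I$ is classical. Hence some entry of $A^p$, as a polynomial in $\alpha$, has constant term different from the matching entry of the identity matrix; using $N^2=0$ (so that positive powers of $N$ beyond the first vanish), the corresponding block of $T^p$ is of the form $cI_H+dN$ with $c\neq 0$, and $cI_H+dN\neq 0$ because $N$, being unbounded, is not a scalar multiple of $I_H$, so $I_H$ and $N$ are linearly independent. Thus $T^p\neq I$. To produce infinitely many such $T$, replace $N$ by $\beta N$ for $\beta\in\R\setminus\{0\}$: one still has $(\beta N)^2=0$ and the resulting operators are pairwise distinct. (An alternative to the circulant is the ``weighted cyclic shift'' $(x_1,\dots,x_n)\mapsto(Ax_2,x_3,\dots,x_n,A^{-1}x_1)$ with $A,A^{-1}$ everywhere defined and non-closable as in the corollary following Proposition \ref{T^2=I UNCLOSABLE PRO}, whose $n$-th power is the identity; but the circulant route ties directly to Example \ref{israel example}.)
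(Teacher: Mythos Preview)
Your proof is correct and takes essentially the same route as the paper: both substitute an everywhere defined non-closable operator for the parameter $\alpha$ in the parametrized circulant of Example \ref{israel example} and read off $T^n=I$ and $T^p\neq I$ from the scalar identities. The paper uses an \emph{arbitrary} such $A$ (your extra hypothesis $N^2=0$ is unnecessary, as you yourself observe for the $T^n=I$ step) and simply works out the case $n=3$ by hand, whereas you impose $N^2=0$ only to make the $T^p\neq I$ check transparent and supply a more careful polynomial-identity justification; one minor wording slip (``$c\neq 0$'' should be ``$c$ differs from $\delta_{ij}$'', but choosing an off-diagonal slot where $P^p$ has a $1$ gives exactly $c=1\neq 0$) does not affect the argument.
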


\begin{proof}
Let $A$ be a non-closable unbounded operator which is everywhere
defined, i.e. $D(A)=H$ and let $I\in B(H)$ be the identity operator.
Inspired by the  example above, let
\[T=\left(
    \begin{array}{cccccc}
      0 & I & A & 0 & \cdots & 0 \\
      0 & 0& I & 0 &  &  \vdots\\
      \vdots &  & 0 & I & \ddots & \vdots \\
      \vdots &  &  & \ddots & \ddots & 0 \\
      0 &  &  &  & 0 & I \\
      I &-A & 0 & \cdots & 0 & 0 \\
    \end{array}
  \right)\]
be defined on $D(T)=H\oplus H\oplus\cdots \oplus H$ ($n$ times).
This means that $T$ is everywhere defined. Notice also that $T$ is
clearly unbounded and not closable.

Readers may check that $T^n=I$ on $D(T^n)=H\oplus H\oplus \cdots
\oplus H$ whereas $T^p\neq I$ for $p=1,2,\cdots, n-1$. As an
illustration, we treat the special case $n=3$. In this case,
\[T=\left(
      \begin{array}{ccc}
        0 & I & A \\
        0 & 0 & I \\
        I & -A & 0 \\
      \end{array}
    \right).
\]
Then
\[T^2=\left(
      \begin{array}{ccc}
        A & -A^2 & I \\
        I & -A & 0 \\
        0 & I & 0 \\
      \end{array}
    \right)\neq I\oplus I\oplus I\]
\text{ whereas }
\[T^3=\left(
      \begin{array}{ccc}
        I & 0 & 0 \\
        0 & I & 0 \\
        0 & 0 & I \\
      \end{array}
    \right)=I\oplus I\oplus I,\]
    as wished.

To obtain an infinite family of such roots, just replace $A$ by
$\alpha A$ where $\alpha$ is real, say.
\end{proof}

We finish with a digression which is in the spirit of the paper. In
the case of matrices of operators, readers have already observed
here an apparent resemblance to usual matrices with real or complex
coefficients. In view of many examples treated here and elsewhere,
it seems therefore reasonable to conjecture that:

If $T$ is a matrix of operators defined formally on $H\oplus H\oplus
\cdots \oplus H$ ($n$ times), that is, on $H\times H\times \dots
\times H=H^n$ whether the entries are all in $B(H)$ or not, and
$T^p=0$ for some integer $p\geq n$, then necessarily $T^n=0$.

The answer to this conjecture is negative. A counterexample is
available on finite dimensional spaces!

\begin{exa}
Let $H=\C^2$ and let
\[A=\left(
      \begin{array}{cc}
        0 & 1 \\
        0 & 0 \\
      \end{array}
    \right)\text{ and }B=\left(
      \begin{array}{cc}
        1 & 0 \\
        0 & 0 \\
      \end{array}
    \right)
\]
be both defined on $H$. Then
\[AB=\left(
      \begin{array}{cc}
        0 & 0 \\
        0 & 0 \\
      \end{array}
    \right)\text{ and } BA=\left(
      \begin{array}{cc}
        0 & 1 \\
        0 & 0 \\
      \end{array}
    \right)\]
and so $ABA=BAB=\left(
      \begin{array}{cc}
        0 & 0 \\
        0 & 0 \\
      \end{array}
    \right)$. Finally, set
\[T=\left(
      \begin{array}{cc}
        \mathbf{0} & A \\
        B & \mathbf{0} \\
      \end{array}
    \right)
\]
which is defined on $H\times H$ (where $\mathbf{0}\in B(\C^2)$).
Thus,
\[T^2=\left(
      \begin{array}{cc}
        \mathbf{0} & \mathbf{0} \\
        \mathbf{0} & BA \\
      \end{array}
    \right)\text{ and }T^3=\left(
      \begin{array}{cc}
        \mathbf{0} & \mathbf{0} \\
        \mathbf{0} & \mathbf{0} \\
      \end{array}
    \right)\]
and so $T^2\neq 0$ whilst $T^3=0$, marking the end of the proof.
\end{exa}

\section{An open question}
A closable operator $A$ such that $\overline{A}^2$ is self-adjoint
but $A^2$ is not self-adjoint exists. A simple example is to take
$A$ to be the restriction of the identity operator $I$ (on $H$) to
some dense (non closed) subspace $D$ of $H$. Then $\overline{A}^2=I$
fully on $H$ and so $\overline{A}^2$ is self-adjoint. However, $A^2$
is not self-adjoint for $A^2=I_D$ and so $A^2$ is not even closed.

What about the converse, i.e. if $A$ is closable and $A^2$ is
self-adjoint, then could it be true that $\overline{A}^2$ is
self-adjoint? A positive answer can be obtained if one comes to show
that if $A$ is a closable operator with a self-adjoint square $A^2$,
then $A$ is closed.

Let us posit that we have in effect shown that a closable $A$ such
that $A^2$ is self-adjoint is necessarily closed. Another natural
question would then follow: What about when $A^2$ is normal? Another
more general question is to see whether the self-adjointness or the
normality of $A^n$ entails the closedness of $A$ whenever it is
closable?

\section*{Acknowledgements}

The second example in the proof of Theorem \ref{left-right invert
not closed THM} was communicated to me by Professor A. M. Davie (The
University of Edinburgh, UK) a while ago.

Thanks go to Dr S. Dehimi (University of Mohamed El Bachir El
Ibrahimi, Algeria) with whom I discussed e.g. Proposition \ref{with
DEHIMI I} and Theorem \ref{nilpotent NA subset AN THM spc(A+N)=spec
A}.

Thanks are also due to Professor Robert B. Israel (University of
British Columbia, Canada). Indeed, for the purpose of Theorem
\ref{26/06/2020}, I asked him whether we can place a parameter
inside the first matrix in Example \ref{israel example} and still
obtain the same conclusion? He kindly suggested to conjugate with
some nonsingular matrix which does not commute with $A_n$ but
contains a parameter $\alpha$, leading to the second example.

Finally, Example \ref{stochel's example} is due to Professor Jan
Stochel (Uniwersytet Jagiello\'{n}ski, Poland), who communicated it
to me some time ago.

\end{document}